\def\enumfix{%
\if@inlabel
 \noindent \par\nobreak\vskip-\topsep\hrule\@height\z@
\fi}
\let\olditemize\itemize
\def\itemize{\enumfix\olditemize}
\begin{document}

\title{Factorization homology of enriched $\infty$-categories}

\author{David Ayala, John Francis, Aaron Mazel-Gee, and Nick Rozenblyum}

\date{\today}

\begin{abstract}
For an arbitrary symmetric monoidal $\infty$-category $\cV$, we define the factorization homology of $\cV$-enriched $(\infty,1)$-categories over (possibly stratified) 1-manifolds and study some of its basic properties.  
In the case of spectral enrichment, we show that the value of factorization homology on a circle is topological Hochschild homology.  
\end{abstract}

\thanks{DA gratefully acknowledges the support of the NSF under awards 1507704, 1812055, and 1945639. JF gratefully acknowledges the support of the NSF under awards 1508040 and 1812057. AMG gratefully acknowledges the support of the NSF Graduate Research Fellowship Program (grant DGE-1106400) as well as the hospitality of Montana State University.}

\maketitle

\setcounter{tocdepth}{2}
\tableofcontents
\setcounter{tocdepth}{2}

\setcounter{section}{-1}

\section{Introduction}
\label{section.intro}

\startcontents[sections]


\subsection{Categorified integration in quantum field theory}

A fundamental link between manifolds and higher algebraic structures is \bit{factorization homology}: in its most primitive form, this takes a framed $n$-manifold $M$ and an $\EE_n$-algebra $A$ -- in chain complexes, say -- and returns a chain complex
\[
\int_M A
\]
obtained by ``integrating'' the algebra $A$ over configurations of $n$-disks in $M$ \cite{LurieHA,AF-oldfact}.\footnote{One can weaken the condition that $M$ be framed in exchange for requiring further structure on $A$ \cite{AF-oldfact}.}  Besides being intimately related to the study of mapping spaces and proximate notions in manifold topology \cite{Salv-conf,Kallel-part,Bodig-split,McDuff-conf,May-GILS,Segal-conf,LurieHA,AF-oldfact}, factorization homology is a close cousin of Beilinson--Drinfeld's algebro-geometric theory of \textit{chiral homology} \cite{BD-chiral}, which computes conformal blocks in conformal field theory.  These ideas have since spawned much subsequent activity in mathematical physics, notably Costello--Gwilliam's work on perturbative quantum field theory \cite{CostGwill-fact1,CostGwill-fact2} wherein they recover global observables from local observables via factorization homology.

Higher-dimensional defects are essential for describing field theories -- even mapping spaces -- that are not perturbative.  Towards accommodating such defects, in \cite{AFR-fact} Ayala--Francis--Rozenblyum generalized the algebraic input of factorization homology from $\EE_n$-algebras to \textit{$(\infty,n)$-categories with adjoints} ($n\leq 2$):\footnote{Just as an algebra determines a one-object category, an $\EE_n$-algebra determines an $(\infty,n)$-category with a unique $k$-morphism for all $k < n$.} for a framed $n$-manifold $M$ and an $(\infty,n)$-category $\cC$ with adjoints, they defined the factorization homology
\[
\int_M \cC
\]
of $\cC$ over $M$ as the space of labelings by $\cC$ of \textit{disk-stratifications} of $M$.  This construction allows for field theories that are not necessarily determined by their point-local observables.

However, this construction is still one step removed from the production of TQFTs of physical interest, which are \textit{linear} in nature: where the framework of \cite{AFR-fact} yields a space, one would like to obtain a vector space or chain complex.  In this paper, we provide a blueprint for the appropriate generalization: we construct the factorization homology of \textit{enriched} $(\infty,1)$-categories.
We expect that our construction contains all the essential features of a full theory of factorization homology of enriched $(\infty,n)$-categories with adjoints.
The key idea that drives our approach can be summarized as follows.

\begin{slogan}
\label{main.fact.slogan}
\bit{Enriched} factorization homology arises from \bit{categorified} factorization homology.
\end{slogan}

\noindent We will explain \Cref{main.fact.slogan} in \Cref{subsection.fact.hlgy.intro}.

\subsection{Hochschild homology, cyclic homology, and the cyclotomic trace}

Our primary motivation for constructing enriched factorization homology comes from a different direction, namely topological Hochschild homology and its connection with algebraic K-theory.

Recall that the \textit{Hochschild homology} of an associative ring $A$ 
is the value
\[
\HochH(A) \in \mathbf{D} (\Mod_\ZZ)
\]
of the left derived functor of the coinvariants functor
\[ \begin{tikzcd}[row sep=0cm]
\BiMod_{(A,A)}
\arrow{r}
&
\Mod_\ZZ
\\
\rotatebox{90}{$\in$}
&
\rotatebox{90}{$\in$}
\\
M
\arrow[maps to]{r}
&
M/[A,M]
\end{tikzcd} \]
at the $(A,A)$-bimodule $A$. 
This can be identified with the factorization homology over $\SS^1$
\[
\HochH(A)
\simeq
\int_{\SS^1} A
\]
of $A$, considered as an $\EE_1$-algebra in the $\infty$-category $\Mod_{H\ZZ}$ of $H\ZZ$-modules (i.e., chain complexes localized at the quasi-isomorphisms) \cite{LurieHA}.  
In fact, this equivalence lifts to a direct identification of the cyclic bar construction as computing factorization homology over $\SS^1$ (see \Cref{r7}\Cref{example:circle}).

From our perspective, a crucial advantage of the definition of Hochschild homology via factorization homology is that
\begin{itemize}
\item[] \textit{factorization homology manifests the inherent symmetries of Hochschild homology.}
\end{itemize}
For example, its natural action of the circle group $\TT$ arises simply from the functoriality of factorization homology for automorphisms of framed manifolds.  By contrast, this action only appears through the machinery of cyclic sets (or mixed complexes) when Hochschild homology is defined through simplicial (or homological) methods \cite{Loday-cyclic}.
This passage from handicrafted functoriality to manifest functoriality is analogous to the passage from cellular homology to singular homology.

This $\TT$-action on Hochschild homology is fundamental in the study of the \bit{algebraic K-theory} of $A$.  Namely, the latter admits a homotopy $\TT$-invariant \textit{Dennis trace} map to $\HochH(A)$, 
and Goodwillie proved \cite{GooRelAKT} that the resulting \textit{cyclic trace} map
\[ \begin{tikzcd}
\K(A)
\arrow{rr}
\arrow[dashed]{rd}
&
&
\HochH(A)
\\
&
\HC(A)
\arrow{ru}
&
\hspace{-38pt} := \HochH(A)^{\htpy \TT}
\end{tikzcd} \]
to \textit{negative cyclic homology} is ``locally constant'' after rationalization.

Seeing that his theorem could not hold integrally, Goodwillie conjectured the existence of a refinement of $\HochH$ obtained by replacing the ground ring $\ZZ$ with the sphere spectrum $\SS$.  This was cleverly constructed by B\"okstedt \cite{Bok-THH}, despite an utter lack of sufficient foundations (namely a good model category of spectra): for an arbitrary associative (i.e., $\EE_1$-)ring spectrum $A$, he defined its \bit{topological Hochschild homology} spectrum
\[
\THH(A)
\in \Spectra
~.
\]
He moreover endowed this with a homotopy $\TT$-action and a homotopy $\TT$-invariant (``topological'') Dennis trace map
\[
\K(A)
\longra
\THH(A)
~,
\]
which factors the Dennis trace map to $\HochH(A)$ in the case that $A$ is an ordinary ring.

But the homotopy $\TT$-fixedpoints of $\THH$
proved to be still insufficient to integrally approximate algebraic K-theory.  In their celebrated paper \cite{BHM}, B\"okstedt--Hsiang--Madsen introduced a further refinement
\[
\TC(A)
:=
\THH(A)^{\htpy \Cyclo}
\]
of $\THH$, which they called \bit{topological cyclic homology}: the $\TT$-action on $\THH(A)$ admits an enhancement to a \bit{cyclotomic structure}, and $\TC(A)$ is defined as its homotopy invariants.  Soon after, Dundas--Goodwillie--McCarthy proved that the resulting factorization
\[ \begin{tikzcd}
\K(A)
\arrow{rr}
\arrow[dashed]{rd}
&
&
\HochH(A)
\\
&
\TC(A)
\arrow{ru}
&
\hspace{-28pt} := \THH(A)^{\htpy \Cyclo}
\end{tikzcd} \]
of the Dennis trace, called the \bit{cyclotomic trace}, is indeed integrally ``locally constant'' \cite{GooRelAKT,McCRelAKT,DundasRelAKT,DM-KTHH,DGM-book}.
Following these breakthroughs, the cyclotomic trace became a central tool: the vast majority of known computations of algebraic K-theory result from this infinitesimal behavior \tracerefs.

This paper is part of a series, whose overarching purpose is to provide a precise conceptual description of the cyclotomic trace at the level of derived algebraic geometry: this is explained in \cite[\S 0]{AMR-trace}.  The series begins with the paper \cite{AMR-cyclo}, in which we reidentify the $\infty$-category of cyclotomic spectra in terms of \textit{naive} equivariant homotopy theory.

In order to explain the role that this paper plays within the series, let us first note that in the context of these various theories ($\K$, $\THH$, and $\TC$), algebro-geometric objects such as schemes and stacks are incarnated through their stable $\infty$-categories of vector bundles (i.e., perfect complexes).\footnote{Indeed, these theories are all Morita invariant: K-theory is so by definition, and the rest are so because $\THH$ is so as a functor to cyclotomic spectra (see \cite[Remarks \ref*{trace:flagged}, \ref*{trace:morita.invce}, \and \ref*{trace:fgt.cyclo.is.conservative}]{AMR-trace}).  Hence, we lose nothing by passing from associative ring spectra to their stable $\infty$-categories of perfect modules.}  As justified through Theorem~\ref{main.theorem} below, it is possible to define $\THH$ 
of an arbitrary spectrally-enriched (e.g.\! stable) $\infty$-category, or more generally of a $\cV$-enriched $(\infty,1)$-category $\cC$ for an arbitrary $\otimes$-presentable symmetric monoidal $\infty$-category $(\cV,\boxtimes)$. 
Namely, for $\sC$ an ordinary category enriched over the ordinary category of spectra, the \bit{topological Hochschild homology} of $\sC$ is the geometric realization of the cyclic bar construction of its cofibrant replacement $\w{\sC}$:
\[
{\sf THH}(\sC)
~:=~
\Bigl |
{\sf Bar}_\bullet^{\sf cyc} 
(\w{\sC})
\Bigr |
~\in 
\Spectra
~.
\]
Because the cyclic bar construction is the restriction of a paracyclic object in $\Spectra$, there is a canonical action $\TT \lacts {\sf THH}(\sC)$.

\begin{maintheorem}
\label{main.theorem}
Let $(\cV,\boxtimes)$ be a $\otimes$-presentable symmetric monoidal $\infty$-category.
For each $\cV$-enriched $(\infty,1)$-category $\cC$ 
factorization homology over the circle defines a $\TT$-module in $\cV$:
\[
\Bigl(
\TT
\lacts
\int_{\SS^1} \cC
\Bigr)
~\in ~
\Mod_{\TT}(\cV)
~.
\]
In the case that $\cV = \Spectra$, and that $\cC$ is presented by an ordinary category $\sC$ enriched over the ordinary category of spectra, there is a canonical identification between $\TT$-module spectra:
\[
\int_{\SS^1} \cC
~\simeq~
{\sf THH}(\sC)
~.
\]

\end{maintheorem}

\subsection{Factorization homology of enriched $(\infty,1)$-categories}
\label{subsection.fact.hlgy.intro}

We now outline our construction of enriched factorization homology, with the goal of explaining \Cref{main.fact.slogan}. Everything described in this subsection will be made rigorous over the course of the present paper.

We base our work in the $\infty$-category of \textit{compact solidly 1-framed stratified stratified spaces} of \cite{AFR-fact}; for simplicity, we just refer to this as the $\infty$-category of \bit{stratified stratified spaces} and denote it by $\M$.  An object in $\M$ is a finite disjoint union of framed circles and finite directed connected graphs; in particular, note that the 0-disk $\DD^0$ and the framed circle $\SS^1$ both define objects of $\M$.  We will not describe the morphisms in $\M$ here, but we note that they include the opposites of (solidly 1-framed) constructible bundles. 
The category $\M$ also admits an explicit combinatorial description by \cite{circle}.

Now, any stratified stratified space $M \in \M$ admits a category $\D(M)$ of \bit{disk-refinements} of $M$.\footnote{A priori $\D(M)$ is an $\infty$-category, but it is not hard to see that it is actually an ordinary category (see~\cite{circle}).}
A disk-refinement of $M$ consists of a configuration of points in the 1-dimensional strata of $M$, such that no circles remain unstratified (i.e., it is a stratification of $M$ by points and open intervals).  Note that a disk-refinement inherits the structure of a finite directed graph, so that it is itself an object in $\M$, namely one that is \textit{disk-stratified}: it contains no unstratified circles.
The morphisms in $\D(M)$ are generated by
\begin{itemize}
\item disappearances of points,
\item anticollisions of points, and
\item isotopies of configurations of points.
\end{itemize}
We note here that when $M$ is itself disk-stratified, then it defines a final object in $\D(M)$.

After laying the necessary foundations, in \Cref{subsection.enr.fact.hlgy} we will define the factorization homology over $M$ of a $\cV$-enriched $(\infty,1)$-category $\cC$ as the colimit
\begin{equation}
\label{heuristic.formula.for.enr.fact.hlgy}
\int_M \cC
:=
\underset{R \in \D(M)}{\colim}
\left(
\underset{R^{(0)} \xlongra{\lambda} \iC}{\colim}
\left(
\underset{e \in R^{(1)}}{\standaloneboxtimes}
\ulhom_\cC ( \lambda(s(e)) , \lambda(t(e)) )
\right)
\right)
~.
\end{equation}
In words, this will be the colimit
\begin{itemize}
\item indexed over
\begin{itemize}
\item disk-refinements $D$ of $M$ and
\item labelings $\lambda$ of the set of vertices $D^{(0)}$ of $D$ by objects of $\cC$
\end{itemize}
\item of the monoidal product in $\cV$
\begin{itemize}
\item indexed over the set of edges $e \in R^{(1)}$ of $D$
\item of the hom-object in $\cC$
\begin{itemize}
\item from the object $\lambda(s(e)) \in \cC$ labeling the source of $e$
\item to the object $\lambda(t(e)) \in \cC$ labeling the target of $e$.
\end{itemize}
\end{itemize}
\end{itemize}
The structure maps as $D \in \D(M)$ varies will be generated by
\begin{itemize}
\item composition in $\cC$ (for disappearances of points),
\item insertion of identity maps in $\cC$ (for anticollisions of points), and
\item equivalences (for isotopies of configurations of points).
\end{itemize}
This definition will simplify considerably when $M$ is itself disk-stratified: the colimit can then be identified as the value
\[
\int_M \cC
\simeq
\underset{M^{(0)} \xlongra{\lambda} \iC}{\colim}
\left(
\underset{e \in M^{(1)}}{\standaloneboxtimes}
\ulhom_\cC ( \lambda(s(e)) , \lambda(t(e)) )
\right)
\]
of the inner expression in formula \Cref{heuristic.formula.for.enr.fact.hlgy} at the final object $D = M \in \D(M)$.

In order to explain \Cref{main.fact.slogan}, we describe an essential ingredient in making the definition \Cref{heuristic.formula.for.enr.fact.hlgy} rigorously, namely
the formalism of \bit{$\cV$-enriched $(\infty,1)$-categories} of \cite{GH-enr}.  
Towards this end, we
first recall two auxiliary definitions.
\begin{itemize}
\item A monoidal $\infty$-category $(\cV,\boxtimes)$ is specified by its \textit{monoidal deloop}
\[ \bDelta^\op \xra{\BV} \Cat~, \]
i.e., its bar construction.  In fact, this is a \textit{category-object} in $\Cat$, i.e., it satisfies the Segal condition (though not necessarily the completeness condition).
\item For a space $X \in \Spaces$, the \textit{codiscrete category-object} on $X$ is the Segal space
\[
\bDelta^\op
\xra{\cd(X)}
\Spaces
\]
whose space of objects is $X$ and whose hom-spaces are all contractible: in simplicial degree $n$, this is the space $X^{\times (n+1)}$.
\end{itemize}
Now, a $\cV$-enriched $(\infty,1)$-category $\cC$ is specified by the following two pieces of data:
\begin{itemize}
\item its \bit{underlying $\infty$-groupoid}, an object $\iC \in \Spaces$, and
\item its \bit{enriched hom functor}, a right-lax functor
\begin{equation}
\label{enr.hom.functor.for.C}
\cdiC
\xra{\ulhom_\cC}
\BV
\end{equation}
between category-objects in $\Cat$.
\end{itemize}
Let us unwind the functor \Cref{enr.hom.functor.for.C}.  In simplicial degree $n$, it is given by
\[ \begin{tikzcd}[row sep=0cm]
(\iC)^{\times (n+1)}
\arrow{r}
&
\cV^{\times n}
\\
\rotatebox{90}{$\in$}
&
\rotatebox{90}{$\in$}
\\
(C_0,\ldots,C_n)
\arrow[mapsto]{r}
&
( \ulhom_\cC(C_0,C_1) , \ldots , \ulhom_\cC(C_{n-1},C_n) )
\end{tikzcd}
~,
\]
and the right-laxness determines the categorical structure maps; for instance, restricting to the indicated morphisms in $\bDelta^\op$, this specifies the diagram
\[ \begin{tikzcd}
{[2]^\circ}
\arrow{d}[swap]{\delta_1}
&
(\iC)^{\times 3}
\arrow{r}
\arrow{d}
&
\cV^{\times 2}
\arrow{d}
\\
{[1]^\circ}
&
(\iC)^{\times 2}
\arrow{r}[pos=0.3, transform canvas={yshift=0.4cm}]{\rotatebox{-135}{$\Rightarrow$}}[swap, pos=0.3, transform canvas={yshift=-0.4cm}]{\rotatebox{135}{$\Rightarrow$}}
&
\cV
\\
{[0]^\circ}
\arrow{u}{\sigma_0}
&
\iC
\arrow{r}
\arrow{u}
&
\pt
\arrow{u}
\\
\bDelta^\op
&
\cdiC
\arrow{r}{\ulhom_\cC}
&
\BV
\end{tikzcd}
~,
\]
in which the upper square selects the composition maps
\begin{equation}
\label{composition.in.enr.cat}
\ulhom_\cC(C_0,C_1) \boxtimes \ulhom_\cC(C_1,C_2)
\longra
\ulhom_\cC(C_0,C_2)
\end{equation}
while the lower square selects the unit maps
\begin{equation}
\label{unit.in.enr.cat}
\uno_\cV
\longra
\ulhom_\cC(C_0,C_0)
~.
\end{equation}

And now appears \bit{categorified factorization homology}.  Namely, we can take the factorization homology \textit{of category-objects} in $\Cat$: this follows exactly the same prescription given above, only there is now an \textit{$\infty$-category} (instead of just an $\infty$-groupoid) of objects with which to label the vertices of the disk-refinement.  In particular, over a disk-stratified stratified space $D \in \M$, this is easy to describe since $D \in \D(D)$ is a final object, and it is made easier still because $\Cat$ is \textit{cartesian} symmetric monoidal.  Namely, the $\infty$-category
\[
\int_R \cY
\in
\Cat
\]
is just a limit of copies of $\cY_{|[0]^\circ}$ and $\cY_{|[1]^\circ}$: one copy of $\cY_{|[0]^\circ}$ for each vertex, one copy of $\cY_{|[1]^\circ}$ for each edge, and with structure maps $s = \delta_1$ and $t = \delta_0$ determined by the incidence data of the directed graph $D$.

Now, for an arbitrary stratified stratified space $M \in \M$ and a category-object $\cY \in \Fun(\bDelta^\op,\Cat)$, categorified factorization homology defines a functor
\[
\D(M)
\xra{\int_{(-)} \cY}
\Cat
~,
\]
or equivalently a cocartesian fibration
\begin{equation}
\label{catfied.fact.hlgy.of.Y.restr.to.D.of.M}
\begin{tikzcd}
\dint_{\! \! \! |\D(M)} \cY
\arrow{d}
\\
\D(M)
\end{tikzcd}
~.
\end{equation}
This construction is suitably functorial for right-lax functors between category-objects, so that a $\cV$-enriched $(\infty,1)$-category $\cC$ -- that is, its enriched hom functor \Cref{enr.hom.functor.for.C} -- determines a functor
\begin{equation}
\label{catfied.fact.hlgy.of.enr.hom.functor}
\int_{|\D(M)} \cdiC
\xra{\int_{|\D(M)} \ulhom_\cC}
\int_{|\D(M)} \BV
~.
\end{equation}
Over a disk-refinement $D \in \D(M)$, an object in its source is simply given by a labeling of its set of vertices $D^{(0)}$ by objects of the $\infty$-groupoid
\[ \iC =: \cdiC_{|[0]^\circ} \]
(since the morphism-data in $\cdiC$ are canonically determined), while an object in its target is simply given by a labeling of its set of edges $D^{(1)}$ by objects of the $\infty$-category
\[ \cV =: \BV_{|[1]^\circ} \]
(since the object-data in $\BV$ are canonically determined).  In other words, over this object in $\D(M)$ the map \Cref{catfied.fact.hlgy.of.enr.hom.functor} restricts to a map
\[
(\iC)^{\times R^{(0)}}
\longra
\cV^{\times R^{(1)}}
~.
\]
Of course, this is given by nothing other than the enriched hom functor \Cref{enr.hom.functor.for.C} of $\cC$.  

Finally, when $\cV$ is additionally \textit{symmetric} monoidal, there exists a \textit{total tensor product} functor
\[
\int_{|\D(M)} \BV
\xlongra{\bigboxtimes}
\cV
~.
\]
Using this, we define the \bit{enriched factorization homology} of $\cC$ over $M$ 
by the formula
\[
\int_M \cC
:=
\colim
\left(
\int_{|\D(M)} \cdiC
\xra{\int_{|\D(M)} \ulhom_\cC}
\int_{|\D(M)} \BV
\xlongra{\bigboxtimes}
\cV
\right)
~.
\]

Note that this indeed precisely rigorizes the heuristic definition of formula \Cref{heuristic.formula.for.enr.fact.hlgy}.  First of all, this colimit is indexed over the data of a disk-refinement $D \in \D(M)$ and an object in
\[
\int_R \cdiC
~,
\]
i.e., a labeling $\lambda$ of its vertices by points of $\iC$, and on such an object the value of the diagram is precisely the monoidal product
\[
\underset{e \in R^{(1)}}{\standaloneboxtimes}
\ulhom_\cC ( \lambda(s(e)) , \lambda(t(e)) )
\]
in $\cV$.
Moreover, its structure maps are given either by disappearances of points, which are taken to composition maps (as map \Cref{composition.in.enr.cat}), or by anticollisions of points, which are taken to unit maps (as map \Cref{unit.in.enr.cat}); the isotopies of configurations of points are encoded implicitly as equivalences in $\D(M)$.

This definition of enriched factorization homology has the key feature of being optimized for the isolation of the various specific properties of the enriching $\infty$-category $\cV$: all natural operations thereon have only to do with the factorization homology of $\BV$, while the factorization homology of $\cdiC$ simply comes along for the ride.  
This paradigm is implemented in the subsequent work~\cite{AMR-cyclo} to obtain the cyclotomic structure on $\THH$.  


\subsection{Miscellaneous remarks}

\begin{remark}
The work \cite{circle} clarifies the manifold-theoretic origins of a number of categories of lasting interest: the paracyclic category $\para$, the cyclic category $\cyclic$, and the epicyclic category $\epicyc$ (see \Cref{obs.para.cyclic.and.epicyclic.project.as.cocart.fibns}).  Though these admit combinatorial descriptions (through which they were originally defined), we find them easiest to manipulate when incarnated through manifolds: then, the geometry keeps track of the combinatorics.
\end{remark}


\begin{remark}
There are multiple notions of ``enriched $(\infty,n)$-categories''; among these, we expect that TQFTs should be most directly connected to those that are ``only enriched in dimension $n$''.  This is the notion we employ here, in the case that $n=1$.
\end{remark}

\begin{remark}
A key technical result in this paper (\Cref{all.about.extended.cart.s.m.deloop}) furnishes an extension of the $\infty$-operad $\BSVx \da \Fin_*$ corresponding to a cartesian symmetric monoidal $\infty$-category $(\cV,\times)$: we produce a cocartesian fibration $\wBSVx \da \Corr(\Fin)$ over the $\infty$-category of correspondences of finite sets, which sits in a canonical pullback square
\[ \begin{tikzcd}
\BSVx
\arrow{r}
\arrow{d}
&
\wBSVx
\arrow{d}
\\
\Fin_*
\arrow{r}
&
\Corr(\Fin)
\end{tikzcd} \]
and admits a \textit{total cartesian product} functor
\[
\wBSVx
\xlongra{\prod}
\cV
~.
\]
These data encode diagonal maps in $\cV$, as well as their interaction with products.  We find this result to be of independent interest, and we expect that it will be useful elsewhere.
\end{remark}

\subsection{Outline}

This paper is organized as follows.
\begin{itemize}

\item In \Cref{section.strat.mflds} we recall relevant foundational material from \cite{AFR-fact}, which is further developed in \cite{circle}, concerning an $\infty$-category $\M$ of stratified 1-manifolds and factorization homology over such.

\item Then, in \Cref{section.categorified.fact.hlgy} we review the theory of flagged enriched $(\infty,1)$-categories (as defined in \cite{GH-enr}) and study their categorified factorization homology.

\item Next, in \Cref{section.enriched.fact.hlgy} we define enriched factorization homology over an arbitrary stratified stratified space, and endow it with its functoriality for automorphisms of stratified stratified spaces.

\item Thereafter, in \Cref{section.cartesian.enr.fact.hlgy} we specialize to the case that our enriching $\infty$-category is \textit{cartesian} symmetric monoidal, and deduce vast additional functoriality of enriched factorization homology via the resulting diagonal maps.

\item In \Cref{section.fact.syst} we prove a result about factorization systems.
\end{itemize}

\subsection{Notation and conventions}
\label{subsection.notation.and.conventions}

\begin{enumerate}

\item \catconventions \inftytwoconventions

\item \functorconventions

\item \circconventions

\item \kanextnconventions

\item \spacescatsspectraconventions

\item \subcatconventions

\item \label{fibrationconventions} \fibrationconventions \dualfibnsconvention

\item \efibconventions

\item \cavalieraboutbicompleteness


\end{enumerate}

\stopcontents[sections]

\section{Recollections of factorization homology in dimension 1}
\label{section.strat.mflds}

In sections~\S\ref{sec.D} and~\S\ref{sec.M} we discuss the $\infty$-categories $\M$ of \bit{solidly 1-framed stratified spaces} (originally introduced as \cite[Definition 2.48]{AFR-fact}), and $\D$ of \bit{solidly 1-framed disk-stratified spaces} (originally introduced as \cite[Definition 3.26]{AFR-fact}), as characterized in~\cite{circle}.  
In~\S\ref{subsection.catfied.fact.hlgy.for.real} we define factorization homology over objects in $\M$ of category-objects internal to an $\infty$-category $\cX$ following \cite{circle}.
We only provide a quick summary suited for our purposes, and refer a reader to those works for details and context.

\subsection{The category $\D$}
\label{sec.D}

Consider the full subcategory $\bDelta_{\leq 1} \subset \bDelta$ consists of those finite non-empty linearly ordered sets with cardinality at most $2$.
Recall from \cite{circle} that we have a monomorphism
\begin{equation}
\label{e1}
{\sf Free}
\colon
{\sf di.Graphs^{\sf fin}}
\to
\Cat_{(\infty,1)}
~,
\end{equation}
from the category of finite directed graphs, which associates to a finite directed graph 
$D$ the free $(\infty,1)$-category on $D$.
The $\infty$-category $\Quiv$ of \bit{(finite) quivers} is  the full subcategory
\[
\Quiv
~\subset~
\Cat_{(\infty,1)}
\]
consisting of those 1-categories that are free on finite directed graphs.

\begin{notation}
We denote the following distinguished objects of $\Quiv$.
\begin{itemize}
\item $\DD^0$ is the 0-disk (i.e., the directed graph with one vertex and no edges),
\item $\DD^1$ is the closed 1-disk (i.e., the connected directed graph with two vertices and one edge),
\item $\SS^1_*$ is the pointed circle (i.e., the directed graph with one vertex and one edge).
\end{itemize}
\end{notation}


\begin{observation}[\cite{circle}]\label{t6}
The \bit{simplex} category and the \bit{epicyclic} category are the $\infty$-subcategories
\[
\bDelta
~\subset~
\Cat_{(\infty,1)}
\qquad\text{ and }\qquad
\w{\bLambda}
~\subset~
\Cat_{(\infty,1)}
\]
respectively consisting of those categories that are the values of ${\sf Free}$ on finite directed graphs that are linearly directed and that are cyclically directed, and respectively consisting of all functors between them and the non-constant functors between them.
In particular, there are fully faithful inclusions
\[
\bDelta
~\hookrightarrow~
\Quiv
\qquad\text{ and }\qquad
\w{\bLambda}
~\hookrightarrow~
\Quiv
~.
\]

\end{observation}

Observation~\ref{t6} offers the following.
\begin{definition}
\label{notation.brackets.functor.from.delta.op}
The \bit{cellular realization} functor is the fully faithful functor
\[
\rho
\colon
\bDelta^{\op}
\hookrightarrow
\Quiv
~.
\]
The value of $\rho$ on an object $[p]^\circ \in \bDelta$ is simply denoted $\rho(p)\in \Quiv$.

\end{definition}

\begin{lemma}
[\cite{circle}]
\label{t20}
The $\infty$-category $\Quiv$ and its classes of morphisms have the following features.
\begin{enumerate}
\item
$\Quiv$ is an ordinary category.




\item
$\Quiv$ admits finite coproducts, which are given by disjoint unions of finite directed graphs.
Furthermore, the fully faithful inclusion $\Quiv \hookrightarrow \Cat_{(\infty,1)}$ preserves finite coproducts,


\item
The restricted Yoneda functor
\[
\fCat_{(\infty,1)}
\longrightarrow
\PShv(\Quiv)
~,\qquad
\cC
\longmapsto
\left(
D
\longmapsto
\Hom_{\Cat_{(\infty,1)}}(D , \cC)
\right)
~,
\]
is fully faithful, whose image consists of presheaves that carry (the opposites of) a certain class of diagrams in $\Quiv$ to limit diagrams.

\end{enumerate}

\end{lemma}

The values of the fully faithful functor of Lemma~\ref{t20}(3) are rather simple, as in the following.
Indeed, as described in~\cite[Observation 1.1.4]{circle} articulates that $\hom_{\fCat_{(\infty,1)}}\bigl(D, \cC \bigr)$ is the moduli space of \bit{$\cC$-labels of $D$}, a point in which is an object in $\cC$ for each vertex in $D$, a morphism in $\cC$ for each (non-degenerate) edge in $D$, together with source-target compatibilities.  
%
%
%
%

\medskip

We next scrutinize the inclusion $\w{\bLambda} \hookrightarrow \Quiv$ of Observation~\ref{t6}.
%
%
%

\begin{definition}[{\cite[Definition 2.1.5]{circle}}]
\label{d4}
The \bit{Witt monoid} is the semi-direct product monoid-object in $\Spaces$:
\[
\WW 
~:=~
\TT \rtimes \NN^\times
~,
\]
where $\TT$ is the circle-group, $\NN^\times$ is the multiplicative monoid of natural numbers, and the action $\NN^\times \lacts \TT$ is given by $n\cdot z := z^n$.  

\end{definition}

%

\begin{prop}
[\cite{circle}]
\label{t10}
There is a natural functor
\[
\w{\bLambda}
\longrightarrow
\fB \WW
\]
that is a cartesian fibration.
Furthermore, there are canonical base-changes among cartesian fibrations
\begin{equation}
\label{paracyclic.cyclic.epicyclic}
\begin{tikzcd}
\copara
\arrow[two heads]{r}
\arrow{d}{\sf loc}
&
\bLambda
\arrow[hook, two heads]{r}
\arrow{d}{\sf loc}
&
\w{\bLambda}
\arrow{d}{\sf loc}
\\
\ast
\arrow[two heads]{r}
&
\BT
\arrow[hook, two heads]{r}
&
\BW
\end{tikzcd}
~,
\end{equation}
in which the vertical functors are localizations, the horizontal functors are surjective, and the right horizontal functors are monomorphisms, 
and where $\copara$ the \bit{paracyclic category} (\cite{GetzlerJones-paracyclic}) and $\bLambda$ is the \bit{cyclic category} (\cite{Connes-coh}).  
In particular, the Witt monoid $\WW$ canonically acts on the paracyclic category $\copara$, and there are canonical identifications of the resulting $\TT$-coinvariants and the right-lax $\WW$-coinvariants:
\[
\left( \copara \right)_{\htpy \TT}
~\simeq~
\bLambda
\qquad\text{ and }\qquad
\left( \copara \right)_{\htpy^\rlax \WW}
~\simeq~
\w{\bLambda}
~.
\]

\end{prop}

\cite[Definition 3.26]{AFR-fact} introduces the $\infty$-category ${\sf c}\bcD{\sf isk}^{\sf sfr}_1$ of solidly 1-framed disk-stratified spaces.  
In this paper, we use the following shorthand.
\begin{notation}
$
\D
~:=~
{\sf c}\bcD{\sf isk}^{\sf sfr}_1
~.
$
\end{notation}
\cite[Lemma 3.44]{AFR-fact} constructs a fully faithful functor
\[
\bDelta^{\op}
\xra{~\langle - \rangle~}
\D
~.
\]
In this way, we regard $\D$ as an $\infty$-category under $\bDelta^{\op}$.
Similarly, via $\bDelta \xra{\rho} \Quiv$ of \Cref{notation.brackets.functor.from.delta.op}, we regard $\Quiv^\op$ as an $\infty$-category under $\bDelta^{\op}$.
A main result of \cite{circle} states the following.
\begin{theorem}[\cite{circle}]
\label{circle.main}
There is an equivalence between $\infty$-categories under $\bDelta^{\op}$:
\[
\D
~\simeq~
\Quiv^{\op}
~.
\]

\end{theorem}

\subsection{The category $\M$}
\label{sec.M}


Recall from \cite{AFR-fact} $\infty$-category of \bit{solidly 1-framed stratified spaces}, which, for simplicity, we denote as
\[
\M
~:=~
{\sf c}\bcM{\sf fd}^{\sf sfr}_1
~.
\]
By definition, there is a fully faithful inclusion
\[
\delta
\colon
\D
~\hookrightarrow~
\M
~.
\]
(See \cite{circle} which gives a universal property of $\M$ and which describes $\M$ combinatorially.)

In \cite[\S 6.6]{AFR-strat}, as well as \cite[Definition 1.14]{AFR-fact}, the define and study the following classes of morphisms in $\M$.  Here, we informally recall these classes of morphisms (see those citations for precise details).  
\begin{definition}
\label{d13}
Let $M \xra{f} N$ be a morphism in $\M$.
\begin{itemize}
\item
Say $f$ is \bit{closed-creation} if it corresponds to a proper constructible bundle $M \la N$. 

\item
Say $f$ is \bit{closed} if it corresponds to an inclusion $M \hookleftarrow N$ of a closed union of strata.

\item
Say $f$ is \bit{creation} if it corresponds to a surjective proper constructible bundle $M \la N$. 

\item
Say $f$ is \bit{refinement} if it corresponds to a refinement map between stratified spaces $M \to N$.  (Informally, a refinement map is a homeomorphism between underlying topological spaces, and $M$ has a finer stratification than $N$.)

\item
Say $f$ is \bit{active} if it is a composite of creation and refinement morphisms.

\end{itemize}

\end{definition}

\begin{notation}
We use the symbols $\cls$, $\act$, $\cre$, $\refi$, and $\cls.\cre$ to respectively refer to the subcategories of $\M$ consisting of closed, active, creation, refinement, and closed-creation morphisms.
\end{notation}

\begin{observation}
\label{obs.fact.syst.on.M}
By \cite[Theorem 6.5.6]{AFR-strat}, the pair $[\M^{\cls} ; \M^{\act}]$ defines a factorization system on $\M$: that is, every morphism in $\M$ admits a unique factorization as the composite
\[
\bullet
\stackrel{\cls}{\longra}
\bullet
\stackrel{\act}{\longra}
\bullet
\]
of a \textit{closed} morphism followed by an \textit{active} morphism.  On the other hand, any active morphism in $\M$ can be non-uniquely factored as the composite
\[
\bullet
\stackrel{\cre}{\longra}
\bullet
\stackrel{\refi}{\longra}
\bullet
\]
of a creation morphism followed by a refinement morphism \cite[\S 6.2]{AFR-strat}.
\end{observation}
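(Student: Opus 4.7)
The plan is to reduce both claims to the cited results of \cite{AFR-strat} via the pullback squares recorded in \Cref{remark.closedcreation.and.ref.are.just.Strat}, and then verify that the relevant factorizations remain inside the full subcategory $\M \subseteq \Bun$.

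For part (1), I would begin by invoking \cite[Theorem 6.5.6]{AFR-strat}, which supplies the factorization system $[\Bun^{\cls}; \Bun^{\act}]$. The defining conditions of being closed or being active are formulated purely in terms of the underlying constructible bundle over $\Delta^1$, so the two classes of morphisms in $\M$ are, by definition, the preimages of the corresponding classes in $\Bun$ under the full subcategory inclusion. Thus both closure under composition and the existence/uniqueness of factorizations can be imported from $\Bun$, provided the intermediate object lies in $\M$. Concretely, given a representing bundle $M \to \Delta^1$ with $M_0$ and $M_1$ compact vari-framed 1-manifolds, one identifies the intermediate object as $M_0' \subseteq M_0$, the union of those connected components each of whose points extends to a section $\Delta^1 \to M$. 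This is a union of components of $M_0$, hence inherits compactness, 1-dimensionality, and a vari-framing. The reversed cylinder on the inclusion $M_0' \hookrightarrow M_0$ furnishes the closed morphism $M_0 \to M_0'$, and the restriction of $M$ over $\Delta^1$ to the components that survive yields an active morphism $M_0' \to M_1$.

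For part (2), the non-unique factorization of any active morphism in $\Bun$ as creation followed by refinement is provided by \cite[\S 6.2]{AFR-strat}. Once again I would only have to check that the intermediate object -- obtained by collapsing the refinement so that the constructible bundle has surjective underlying map -- carries a natural vari-framing (pulled back from the refinement map) and remains compact of dimension at most $1$. Non-uniqueness then comes from the fact that the position and number of strata added by the refinement is not canonically pinned down by the underlying active morphism.

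The one substantive obstacle is bookkeeping the vari-framings. Neither \cite[Theorem 6.5.6]{AFR-strat} nor \cite[\S 6.2]{AFR-strat} is stated in the vari-framed setting, and the careful point is that the vari-framing on the intermediate object in each construction is obtained by restriction from the vari-framing on the total bundle $M$; one must check that this restriction defines a valid vari-framing on a compact 1-manifold (which in turn reduces to a verification on 1-dimensional strata, where vari-framings are just ordinary framings compatible along the bundle). Once this is in place, both statements are immediate from the corresponding statements in $\Bun$ together with the pullback diagrams of \Cref{remark.closedcreation.and.ref.are.just.Strat}.
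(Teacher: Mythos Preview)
The paper does not give a proof of this observation beyond the two citations; it is recorded as a direct consequence of \cite[Theorem 6.5.6]{AFR-strat} and \cite[\S 6.2]{AFR-strat}. Your proposal is more detailed than what the paper offers, and the overall strategy---pull back the factorization systems on $\Bun$ via \Cref{remark.closedcreation.and.ref.are.just.Strat} and check that the intermediate objects stay in $\M$---is the right one.

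However, your explicit description of the intermediate object in the closed--active factorization is incorrect. You write that $M_0' \subseteq M_0$ is ``the union of those connected components each of whose points extends to a section $\Delta^1 \to M$.'' But the intermediate object is in general a proper closed constructible subspace of $M_0$, not a union of components. For a concrete example, take the morphism $\brax{3} \to \brax{1}$ in $\D$ arising from the map $[1] \to [3]$, $0 \mapsto 0$, $1 \mapsto 2$ in $\bDelta$. Its closed--active factorization is $\brax{3} \xra{\cls} \brax{2} \xra{\act} \brax{1}$, and $\brax{2}$ sits inside the connected object $\brax{3}$ as the sub-interval on the first two edges---not as a union of components. Your argument that ``this is a union of components of $M_0$, hence inherits compactness, 1-dimensionality, and a vari-framing'' therefore does not go through as stated.

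The fix is routine: the intermediate object is a closed constructible stratified subspace of $M_0$ (this is what ``closed morphism'' means), and such subspaces of compact vari-framed stratified $1$-manifolds are again compact vari-framed stratified $1$-manifolds, with the restricted vari-framing. With that correction your argument is fine.
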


As we will see in Definition~\ref{definition.cartesian.fact.hlgy}, factorization homology over $M$ is defined as a colimit indexed by the $\infty$-category $\D_{/M}$.  
We are therefore interested in final functors to $\D_{/M}$ from simpler $\infty$-categories.  
Proposition~\ref{t18}(2) below supplies such. 
To state it requires some set-up.
\begin{notation}
We write
\[
\totalD
:=
\lim
\left(
\begin{tikzcd}
&
\Ar^\refi(\M)
\arrow{d}{s}
\\
\D
\arrow[hook]{r}
&
\M
\end{tikzcd}
\right)
\]
for the full $\infty$-subcategory of the arrow $\infty$-category $\Ar(\M)$ consisting of those refinement morphisms whose source is disk-stratified.
\end{notation}

\begin{definition}
Let $M \in \M$ be a solidly 1-framed stratified stratified space.  
The category of \bit{disk-refinements} of $M$ is the pullback:
\[
\D(M)
:=
\lim
\left(
\begin{tikzcd}
&
\totalD \arrow{d}{t}
\\
\{ M \}
\arrow{r}
&
\M
\end{tikzcd}
\right)
\simeq
\lim
\left(
\begin{tikzcd}
&
&
\D
\\
&
\Ar^{\sf ref}(\M)
\arrow{r}{s}
\arrow{d}[swap]{t}
&
\M
\arrow[hookleftarrow]{u}
\\
\{ M \}
\arrow{r}
&
\M
\end{tikzcd}
\right)~.\footnote{A priori, $\D(M)$ is an $\infty$-category, but in fact it is always a 1-category (see~\cite{circle}).}
\]
\end{definition}

\begin{prop}[\cite{circle}]
\label{obs.para.cyclic.and.epicyclic.project.as.cocart.fibns}
The pullback diagrams of Observation~\ref{t10} extend as a pullback diagram among $\infty$-categories:
\begin{equation}
\label{e9}
\begin{tikzcd}
\para
\arrow[two heads]{r}
\arrow{d}{\sf loc}
&
\cyclic
\arrow[hook, two heads]{r}
\arrow{d}{\sf loc}
&
\epicyc
\arrow[hook]{r}{\ff}
\arrow{d}{\sf loc}
&
\totalD
\arrow{d}{\ev_t}
\\
\ast
\arrow[two heads]{r}{\langle \SS^1 \rangle}
&
\BT
\arrow[hook, two heads]{r}
&
\BW^{\op}
\arrow[hook]{r}[swap]{\ff}
&
\M
\end{tikzcd}
~.
\end{equation}
In particular, there is a canonical $\WW^{\op}$-equivariant equivalence 
\[
\bcD(\SS^1)
~\simeq~
\para
~.
\]

\end{prop}


The assignment $M\mapsto \D(M)$ is functorial in the following sense.

\begin{observation}
\label{pullback.of.disk.stratns.along.stratified.covering.spaces}
Write
\[
\M^{\sf cov}
\subset
\M^{\cls.\cre}
\]
for the subcategory on those closed-creation morphisms which are opposite to \textit{stratified covering spaces} (i.e., proper constructible bundles that are locally trivial (not just after restricting to strata)).  Then, the restriction
\[ \begin{tikzcd}
\left( \totalD \right)_{|\M^{\sf cov}}
\arrow{r}
\arrow{d}
&
\totalD
\arrow{d}{t}
\\
\M^{\sf cov}
\arrow[hook]{r}
&
\M
\end{tikzcd} \]
is a cocartesian fibration, with cocartesian pushforward functors given by pullback of disk-refinements.
\end{observation}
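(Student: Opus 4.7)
The plan is to construct explicit cocartesian lifts via pullback along stratified covering spaces, and to verify the cocartesian universal property using the universal property of the pullback. Throughout, identify a morphism $\psi : M_0 \to M_1$ of $\M^{\sf cov}$ with its opposite stratified covering $\varphi : M_1 \to M_0$, via the reversed-cylinder construction $\cylr$.

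The first step is to check that pullback along $\varphi$ defines a functor $\varphi^\ast : \D(M_0) \to \D(M_1)$. Given a disk-refinement $R_0 \xrightarrow{\refi} M_0$, form $\varphi^\ast R_0 := R_0 \times_{M_0} M_1$ as a stratified space, using that $R_0 \to M_0$ is a homeomorphism on underlying topological spaces. Because $\varphi$ is locally trivial, both conditions (*) and (**) of \Cref{functoriality.for.D.of} hold automatically: the restriction to smooth-circle components remains a covering, and the strata of $M_1$ are preimages of strata of $M_0$. Consequently $\varphi^\ast R_0 \to M_1$ is a refinement in $\M$, inheriting its vari-framing from that of $M_1$ together with the refinement $R_0 \to M_0$; and $\varphi^\ast R_0$ is disk-stratified, since covering spaces of open intervals are disjoint unions of open intervals. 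Functoriality of $\varphi^\ast$ in $\varphi$ is then immediate, as stratified coverings compose and pullbacks compose canonically.

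The second step is to produce the candidate cocartesian lift. For each $\psi$ in $\M^{\sf cov}$ and each $(R_0 \xrightarrow{\refi} M_0)$ in $\D(M_0)$, the defining Cartesian square for $\varphi^\ast R_0$ in $\Strat$ yields, via $\cylr$, a morphism
\[
\widetilde\psi : (R_0 \xrightarrow{\refi} M_0) \longra (\varphi^\ast R_0 \xrightarrow{\refi} M_1)
\]
in $(\totalD)_{|\M^{\sf cov}}$ that projects to $\psi$ under the restricted target projection.

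Finally, verify that $\widetilde\psi$ is cocartesian. Given any $(R_2 \xrightarrow{\refi} M_2) \in \totalD$, any $\chi : M_1 \to M_2$ in $\M^{\sf cov}$, and any morphism in $(\totalD)_{|\M^{\sf cov}}$ from $(R_0 \to M_0)$ to $(R_2 \to M_2)$ lying over $\chi \circ \psi$, the universal property of the pullback $\varphi^\ast R_0 = R_0 \times_{M_0} M_1$ in the 1-category of stratified spaces (applied after translating across $\cylr$) produces a unique top-component filler $\varphi^\ast R_0 \to R_2$ over $\chi$, compatible with the refinements. Because $\D(M_1)$ is a 1-category by \Cref{D.of.M.is.a.one.cat}, this set-theoretic uniqueness promotes to a contractibility statement for the relevant space of fillers in $(\totalD)_{|\M^{\sf cov}}$, which is precisely the cocartesian criterion. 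The main obstacle is bookkeeping: tracking the vari-framings through the pullback and confirming that the 1-categorical pullback universal property is strong enough to yield the $\infty$-categorical cocartesian condition — both of which are ultimately controlled by the locally trivial nature of $\varphi$ together with the 1-categorical nature of $\D(M_1)$.
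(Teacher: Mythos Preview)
The paper presents this as an Observation without proof; the only justification is the sentence immediately preceding it, which asserts that the failure of conditions (*) and (**) in \Cref{functoriality.for.D.of} is the sole obstruction to cocartesianness. Since stratified coverings manifestly satisfy both conditions, the paper regards the Observation as immediate. Your argument is thus considerably more detailed than anything in the paper, and your approach---pullback of disk-refinements along the underlying stratified covering---is exactly the intended one.

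One point deserves more care. In your verification of the cocartesian property you write ``applied after translating across $\cylr$,'' but $\cylr$ only furnishes an equivalence between $\M^{\cls.\cre}$ and (the opposite of) a category of proper constructible bundles. A morphism in $(\totalD)_{|\M^{\sf cov}}$ from $(R_0 \to M_0)$ to $(R_2 \to M_2)$ is a commutative square in $\M$ whose bottom edge lies in $\M^{\sf cov}$; there is no a priori reason the \emph{top} edge $R_0 \to R_2$ is closed-creation, so you cannot simply pass to $\Strat$ and invoke the 1-categorical pullback universal property there. What actually makes the argument go through is that the verticals are refinements (hence homeomorphisms on underlying spaces) and the bottom is a locally trivial cover, which together force the top edge to be classified by the pulled-back cover on underlying spaces---but this requires unpacking a commutative square in $\M$ as a bundle over $\Delta^2 \cup_{\Delta^1} \Delta^2$, not merely citing $\cylr$. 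A cleaner route, closer to the paper's implicit reasoning, is to check cocartesianness one arrow at a time: for each morphism $\psi$ in $\M^{\sf cov}$, verify directly that $(\totalD)_{|[1]} \to [1]$ is cocartesian (the fibers are the 1-categories $\D(M_0)$ and $\D(M_1)$, and one identifies the set of morphisms over $0 \to 1$ with morphisms out of the pullback), and then observe that pullbacks compose.
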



\begin{observation}
\label{obs.possible.maps.of.refinements}
The composite functor factors through the active subcategory:
\[
\xymatrix{
&&
&&
\D^{\sf act}
\ar[d]
\\
\D(M)
\ar[rr]
\ar@{-->}[urrrr]^-{\exists !}
&&
\totalD
\ar[rr]^-{s}
&&
\D
.
}
\]
Furthermore, for 
\[ 
\begin{tikzcd}
D_0
\arrow{rr}
\arrow{rd}[swap, sloped, pos=0.7]{\refi}
&
&
D_1
\arrow{ld}[sloped, pos=0.7]{\refi}
\\
&
M
&
\end{tikzcd}
\]
a morphism in $\D(M)$ that is carried to a creation-morphism $D_0 \xra{f} D_1$ in $\D^{\sf act}$,
then the restriction of the proper constructible bundle over the 1-dimensional strata,
\[
f_|
\colon
(D_1)_{|D_0^{(1)}} 
\longrightarrow
D_0^{(1)}
\]
is a homeomorphism.

\end{observation}


\subsection{Factorization homology}
\label{subsection.catfied.fact.hlgy.for.real}

Here, we recall factorization homology of category-objects internal to an $\infty$-category $\cX$, as introduced in~\cite{AFR-fact} in the case that $\cX = \Spaces$, and modestly developed further in~\cite{circle} for general $\cX$.

\begin{notation}
\label{fact.system.on.bDeltaop}
Through the fully faithful embedding
\[
\bDelta^{\op}
\stackrel{\rho}{\longhookra}
\D
~,
\]
the category $\bDelta^{\op}$ inherits a factorization system, which we denote by
\[ [\bDelta^{\op,\cls};\bDelta^{\op,\act}]~.\]
\end{notation}

\begin{remark}
The factorization system
\[ [\bDelta^{\op,\cls};\bDelta^{\op,\act}] \]
on $\bDelta^\op$ is opposite to a factorization system
\[ [\bDelta^\act ; \bDelta^\cls] \]
on $\bDelta$, which can be described explicitly as follows:
\begin{itemize}
\item the \bit{active} morphisms in $\bDelta$ are those that preserve minimum and maximum elements, and
\item the \bit{closed} morphisms in $\bDelta$ are the consecutive inclusions, i.e., those morphisms of the form $i \mapsto i+k$ for some fixed $k$.
\end{itemize}
\end{remark}

\begin{remark}
The morphisms in $\bDelta^{\op}$ that we call ``closed'' are elsewhere called ``inert'' (e.g.\! in \cite{LurieHA}).  We choose our terminology to remain consistent with that surrounding the $\infty$-category $\M$.
\end{remark}

For the next definition, recall the subcategory
\[
\bDelta^{\sf cls}
~\subset~
\bDelta
\]
consisting of the same objects yet only those morphisms that are \bit{closed}, which is to say convex inclusions.  
See \cite[Section 1.5]{circle} for contextualizing the following.
\begin{definition}
\label{d11}
Let $\cX$ be an $\infty$-category.
The $\infty$-category of \bit{category-objects internal to $\cX$} is the full $\infty$-subcategory
\[
\fCat_1[\cX]
~\subset~
\Fun(\bDelta^{\op},\cX)
\]
consisting of those functors
\[
\cC
\colon
\bDelta^{\op}
\longrightarrow
\cX
\]
for which the composite functor $(\bDelta^{\sf cls})^{\op} \hookrightarrow \bDelta^{\op} \xra{\cC} \cX$ carries (the opposites of) colimit-diagrams to limit-diagrams.

\end{definition}

%

The following definition is \cite[Definition 4.13]{AFR-fact} in the case that $n=1$ and $\cX=\Spaces$, and in~\cite{circle} for the general case.
\begin{definition}
\label{definition.cartesian.fact.hlgy}
Let $\cX$ be an $(\infty,1)$-category.
For $\cC \in \Fun(\bDelta^\op,\cX)$ and $M \in \M$, the \bit{factorization homology} of $\cC$ over $M$ is the colimit in $\cX$:
\[
\int_M \cC
~=~
\colim\Bigl(
\D_{/M}
\xra{\rm forget}
\D
\xra{~D\mapsto \lim \bigl( (\bDelta^{\op})^{D/} \xra{\rm forget} \bDelta^{\op} \xra{\cC} \cX \bigr)~}
\cX
\Bigr)
~
\in \cX
~.
\]
The \bit{factorization homology} functor is right Kan extension along $\rho$ followed by left Kan extension along $\delta$:
\[
\int_{(=)}(-) :
\Fun(\bDelta^\op,\cX)
\xra{~\rho_*~}
\Fun(\D,\cX)
\xra{~\inclDM_!~}
\Fun(\M,\cX)
~.
\]
\end{definition}


The following summarizes some key properties of factorization homology established in~\cite{circle}.
\begin{prop}[\cite{circle}]\label{t18}
Let $\cX$ be an $\infty$-category.
Let $\cC \in \fCat_1[\cX]\subset \Fun(\bDelta^{\op},\cX)$ be a category-object internal to $\cX$.
\begin{enumerate}
\item
Let $D\in \D$.
There is a canonical functor from the exit-path $\infty$-category to the $\infty$-undercategory:
\[
{\sf Exit}(D)^{\op}
\xra{~\rm dimension~} 
(\bDelta_{\leq 1}^{\op})^{D/}
~\subset~
(\bDelta^{\op})^{D/}
~,\qquad
(x\in D)
\mapsto 
[ {\sf dim}_x(D) ]
~,
\]
where ${\sf dim}_x(D)$ is the dimension of the stratum of $D$ in which $x\in D$ belongs.
This functor has the property that the resulting canonical morphism in $\cX$,
\[
\rho_* \cC(D)~=~
\lim \bigl( (\bDelta^{\op})^{D/} \xra{\rm forget} \bDelta^{\op} \xra{\cC} \cX \bigr)
\longrightarrow
\lim \bigl( \Exit(D)^{\op} \xra{\rm dimension} (\bDelta^{D/})^{\op} \xra{\rm forget} \bDelta^{\op} \xra{\cC} \cX \bigr)
~,
\]
is an equivalence.
In particular, because the $\infty$-category $\Exit(D)$ is finite, the value $\rho_\ast \cC(D)$ is a finite limit in $\cX$ of $\Obj(\cC)$ and $\Mor(\cC)$.

\item
Let $M\in \M$.
The canonical functor
\[
\D(M)
\longrightarrow
\D_{/M}
\]
is final.
In particular, the canonical morphism in $\cX$,
\[
\int_M \cC
\xla{~\simeq~}
\colim\Bigl(
\D(M)
\to \D_{/M}
\xra{\rm forget}
\D
\xra{~D\mapsto \lim \bigl( (\bDelta^{\op})^{D/} \xra{\rm forget} \bDelta^{\op} \xra{\cC} \cX \bigr)~}
\cX
\Bigr)
~,
\]
is an equivalence.
Furthermore, the category $\D(M)$ receives a final functor from $\bDelta^{\op}$.

\end{enumerate}

\end{prop}

\begin{remark}
\cite[Corollary 4.1.6]{circle} establishes the following fact.
For $\cC \in \fCat_1[\cX]\subset \Fun(\bDelta^{\op} , \cX)$ a category-object internal to $\cX$, and for $M\in \M$, the factorization homology $\int_M \cC$ exists provided $\cX$ admits finite limits and geometric realizations and products distribute over geometric realizations.
\end{remark}

%


\begin{remark}
As explained in \Cref{remark.cartesian.enr.fact.hlgy.recovers.AFR}, factorization homology agrees with cartesian enriched factorization homology where the two are comparable.  
\end{remark}

\begin{example}[\cite{circle}]
\label{r7}
Let $\cX$ be an $\infty$-category with finite limits and geometric realizations.
Let $\cC\in \fCat_1[\cX]$ be a category-object internal to $\cX$.
Its factorization homology takes the following values.
\begin{enumerate}
\item
\[
\int_{\DD^0} \cC = \cC([0]) =: \Obj(\cC)
~\in \cX
~,\qquad
\text{ and }
\qquad
\int_{\DD^1} \cC = \cC([1]) =: \Mor(\cC)
~\in \cX
~,
\]
\[
\text{ and more generally }
\qquad
\int_{\rho(p)} \cC = \cC([p]) 
~\in \cX
\qquad
\text{for each }[p]\in \bDelta
~.
\]

\item
For $D\in \D$,
\[
\int_{D} \cC \simeq \lim\Bigl(
\Exit(D)^{\op} \xra{\rm dimension} \bDelta_{\leq 1}^{\op}
\hookrightarrow
\bDelta^{\op}
\xra{\cC} 
\cX
\Bigr)
~,
\]
where $\Exit(D)^{\op} \xra{\rm dimension} \bDelta_{\leq 1}^{\op}$ evaluates on an object in an $i$-dimensional stratum as $[i]$, and on an exiting-path that agrees/disagrees with the direction of an edge to $[0] \xra{ \langle 0 \rangle / \langle 1 \rangle } [1]$.

\item\label{example:circle}
The value of $\int \cC$ on an oriented circle is the geometric realization of the cyclic bar construxtion of $\cC$:
\[
\int_{\SS^1} \cC
\xra{~\simeq~}
\bigl|
{\sf Bar}_\bullet^{\sf cyc}(\cC)
\bigr|
=: {\sf HH}(\cC)
~
\in \cX
~.
\]

\item
For $M ,M'\in \M$, 
the canonical morphism is an equivalence:
\[
\int_{M\sqcup M'} \cC
\xra{~\simeq~}
\int_M \cC
\times
\int_{M'} \cC
~
\in
\cX
~.
\]

\item
Because every object in $\M$ is a finite disjoint union of oriented circles and a finite directed graph, the previous three points determine the value of $\int \cC$ on any object in $\M$.

\end{enumerate}

\end{example}

\section{Right-lax functoriality of factorization homology}
\label{section.categorified.fact.hlgy}

In this section, we use the unstraightening construction to recast factorization homology of category-objects in $\Cat$ in terms of coCartesian fibrations: we call this \bit{categorified factorization homology}.
Once so arranged, we notice as  \Cref{subsection.catfied.fact.hlgy.of.left.lax.functors} that such factorization homology possesses a more functoriality than expected.
We use this gained functoriality to define enriched factorization homology in \Cref{section.enriched.fact.hlgy}.  
We begin with \Cref{subsection.flagged.enr.cats}, which is devoted to the definition of flagged enriched $(\infty,1)$-categories.

\begin{definition}
\label{d.cat.fact}
\bit{Categorified factorization homology} is the composition of factorization homology for category-objects internal to $\Cat$ followed by the unstraightening construction:
\[
\fCat_1[\Cat]
\xra{\int}
\Fun( \M  , \Cat )
~\simeq~
\coCart_{\M}
~,\qquad
\cC
\mapsto
(
\int_{|\M} \cC
\to
\M
)
~.
\]

\end{definition}

\begin{notation}
\label{d.cat.fact.over}
Let $\cC$ be a category-object in $\Cat$.
For $\cK \to \M$ a functor, we denote the pullback $\infty$-category:
\[
\xymatrix{
\int_{|\cK} \cC
\ar[rr]
\ar[d]
&&
\int_{|\M} \cC
\ar[d]
\\
\cK
\ar[rr]
&&
\M
.
}
\]

\end{notation}

\subsection{Flagged enriched $(\infty,1)$-categories}
\label{subsection.flagged.enr.cats}

\begin{notation}
Given a monoidal $\infty$-category $(\cV,\boxtimes)$, we write
\[ \begin{tikzcd}
\BVbox
\arrow{d}
\\
\bDelta^\op
\end{tikzcd} \]
for its \bit{monoidal deloop}, i.e., its corresponding nonsymmetric $\infty$-operad.\footnote{
One way to construct $\BVbox$ is as the unstraightening of the bar-construction 
${\sf Bar}_\bullet \bDelta^{\op}
\to 
\Cat_1$.
}
When the monoidal structure is understood, we simply write
\[
\BV
:=
\BVbox
~.
\]
\end{notation}

The underlying $\infty$-groupoid of a flagged $\cV$-enriched $(\infty,1)$-category participates in its definition via the following.

\begin{definition}
\label{define.infinite.join}
The \bit{codiscrete category-object} functor is the right Kan extension
\[ \begin{tikzcd}[column sep=2cm]
\Fun(\bDelta^\op,\Spaces)
\arrow[transform canvas={yshift=0.9ex}]{r}{(-)_{|[0]^\circ}}
\arrow[dashed, hookleftarrow, transform canvas={yshift=-0.9ex}]{r}[yshift=-0.2ex]{\bot}[swap]{\cd}
&
\Spaces
\end{tikzcd} \]
along the fully faithful inclusion of the initial object $[0]^\circ \in \bDelta^\op$.
\end{definition}

\begin{observation}
The functor $\cd$ takes values in the full subcategory of $\Fun(\bDelta^\op,\Spaces)$ consisting of the category-objects in $\Spaces$ (as its name suggests).  
Hence, it does too when considered as landing in $\Fun(\bDelta^\op,\Cat)$ since the inclusion $\Spaces \subset \Cat$ preserves fiber products.
\end{observation}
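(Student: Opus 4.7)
The plan is to verify the Segal condition on $\cd(X)$ for each space $X \in \Spaces$ by computing the right Kan extension pointwise. Since the inclusion $\{[0]^\circ\} \hookrightarrow \bDelta^\op$ is fully faithful, the pointwise formula gives
\[
\cd(X)([n]^\circ)
\simeq
\lim_{([0]^\circ \to [n]^\circ) \in [0]^\circ \backslash \bDelta^\op} X
~.
\]
The indexing $\infty$-groupoid is discrete: maps $[0]^\circ \to [n]^\circ$ in $\bDelta^\op$ correspond to maps $[0] \to [n]$ in $\bDelta$, of which there are exactly $n+1$ (one for each element of $[n]$), and $[0]$ has no nontrivial automorphisms. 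Hence $\cd(X)([n]^\circ) \simeq X^{\times (n+1)}$, with structure maps induced by reindexing: for $\alpha : [m] \to [n]$ in $\bDelta$, the map $\cd(X)(\alpha^\op) : X^{\times (n+1)} \to X^{\times (m+1)}$ is the pullback along $\alpha$ of families indexed by objects of $[n]$.

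The next step is to check the Segal condition. Using the explicit identification above, the Segal map
\[
\cd(X)([n]^\circ)
\longra
\cd(X)([1]^\circ) \times_{\cd(X)([0]^\circ)} \cdots \times_{\cd(X)([0]^\circ)} \cd(X)([1]^\circ)
\]
becomes the canonical comparison
\[
X^{\times (n+1)}
\longra
\underbrace{X^{\times 2} \times_X \cdots \times_X X^{\times 2}}_{n}
~,
\]
where the $i$-th fiber product factor corresponds to the spine edge $\{i{-}1,i\} \hookrightarrow [n]$ and the gluing is along the common vertex $X = X^{\times \{i\}}$. That this comparison is an equivalence is a general feature of iterated fiber products of projection maps in an $\infty$-category with finite products: the target is computed as a limit over the spine of $[n]^\op$, which the constant diagram identifies with the limit over $[n]^\op$ itself, namely $X^{\times (n+1)}$. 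This shows $\cd(X)$ is a category object in $\Spaces$.

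Finally, for the assertion about $\Fun(\bDelta^\op,\Cat)$: the Segal condition is a condition on finite limits (specifically, fiber products) in the target, and the fully faithful inclusion $\Spaces \hookrightarrow \Cat$ preserves finite limits (since $\Spaces \subset \Cat$ is a reflective subcategory whose inclusion also admits a further left adjoint, given by groupoidification of mapping spaces, hence preserves all limits; the minimal fact we need is preservation of fiber products). Therefore the equivalences witnessing the Segal condition for $\cd(X)$ in $\Spaces$ are equally equivalences when viewed in $\Cat$, so $\cd(X)$ remains a category object after postcomposition with $\Spaces \hookrightarrow \Cat$. The main (and really only) subtlety is the pointwise computation of the right Kan extension; the Segal condition is then immediate from the standard decomposition of $X^{\times (n+1)}$ as an iterated fiber product.
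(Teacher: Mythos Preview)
Your argument is correct and supplies details the paper omits: in the paper this appears as an unproved Observation, with the formula $\cd(X)([n]^\circ) \simeq X^{\times(n+1)}$ stated separately in the introduction but not derived.

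One notational slip to fix: for the pointwise right Kan extension you want the comma category of maps $[n]^\circ \to [0]^\circ$ in $\bDelta^\op$ (equivalently maps $[0] \to [n]$ in $\bDelta$), not maps $[0]^\circ \to [n]^\circ$. Since $[0]^\circ$ is \emph{initial} in $\bDelta^\op$, the latter category is a singleton, which would give the wrong answer. Your conclusion that the indexing set has $n+1$ elements is correct once the direction is fixed, and the rest of the argument---the verification of the Segal condition via the spine decomposition of $X^{\times(n+1)}$, and the preservation of fiber products by the inclusion $\Spaces \hookrightarrow \Cat$ (which is a right adjoint)---proceeds without issue and matches the reasoning the paper implicitly relies on.
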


\begin{remark}
\label{remark.segal.spaces}
Category-objects in $\Spaces$ (i.e., Segal spaces) are equivalent to \textit{flagged $(\infty,1)$-categories} in the sense of \cite{AF-flagged}.  This is to say that a Segal space is equivalently specified by an $(\infty,1)$-category, called its \bit{underlying $(\infty,1)$-category}, equipped with a surjective functor from an $\infty$-groupoid.  
Thereafter, a Segal space is \textit{complete} just when this functor is the inclusion of its maximal subgroupoid.  
For a space $V \in \Spaces$, the Segal space $\cd(V)$ corresponds to the flagged $(\infty,1)$-category
\[
V
\longra
\tau_{\leq (-1)} V
~,
\]
the canonical map from $V$ to its $(-1)$-truncation.  
In other words, the underlying $(\infty,1)$-category of $\cd(V)$ is $\pt$ when $V$ is nonempty and $\es$ when $V$ is empty.
\end{remark}

We now proceed to define right-lax functors between category-objects in $\Cat$.  This requires a some preliminaries.
Recall the Notation~\ref{fact.system.on.bDeltaop} for $\bDelta^{\sf cls}$ and $\bDelta^{\sf act}$.
\begin{notation}
\label{notation.cls.cocart}
Given a pair $\cB \supset \cB_0$ of an $\infty$-category and a subcategory thereof, we write
\[
\begin{tikzcd}[row sep=1.5cm]
\Cat^{\cB_0}_{\cocart/\cB}
\arrow[hook, two heads]{r}
\arrow{d}
&
\Cat_{\cocart/\cB}
\arrow{d}
\\
\coCart_{\cB_0}
\arrow[hook, two heads]{r}
&
\Cat_{\cocart/\cB_0}
\end{tikzcd}
\]
for the pullback, where the right vertical functor is given by pullback along the inclusion and the bottom functor is the canonical surjective monomorphism.  In the special case of the pair $\bDelta^\op \supset \bDelta^{\op,\cls}$ we simply write
\[
\Cat^{\cls}_{\cocart/\bDelta^\op}
:=
\Cat^{\bDelta^{\op,\cls}}_{\cocart/\bDelta^\op}
~.
\]
\end{notation}

\begin{remark}
The $\infty$-category $\Cat^{\cB_0}_{\cocart/\cB}$ can be informally described as follows:
\begin{itemize}
\item its objects are the cocartesian fibrations over $\cB$, and
\item its morphisms are those functors over $\cB$ that preserve cocartesian lifts of morphisms in $\cB_0 \subset \cB$.
\end{itemize}
\end{remark}

\begin{definition}
Let $\sC, \sD \in \fCat_1[\Cat_1]\subset \Fun(\bDelta^{\op},\Cat_1)$ be category-objects in $\Cat_1$.
A \bit{right-lax functor} from $\sC$ to $\sD$ is a morphism in $\Cat^\cls_{\cocart/\bDelta^\op}$ 
\[
\xymatrix{
\cC
\ar[dr]
\ar@{-->}[rr]
&&
\cD
\ar[dl]
\\
&
\bDelta^{\op}
&
}
\]
between the cocartesian fibrations that they classify.
A \bit{(strict) functor} from $\sC$ to $\sD$ is a right-lax functor that belongs to the $\infty$-subcategory $\coCart_{\bDelta^{\op}}\subset \Cat^\cls_{\cocart/\bDelta^\op}$.
\end{definition}

\begin{remark}
In light of the unstraightening equivalence between $\infty$-categories, ${\sf Un}\colon \Fun(\bDelta^{\op} , \Cat_1) \simeq \coCart_{\bDelta^{\op}}$, a strict functor from $\sC$ to $\sD$ corresponds to a natural transformation from $\sC$ to $\sD$ as simplicial $(\infty,1)$-categories.

\end{remark}

In particular, we have the following.

\begin{definition}[after~\cite{GH-enr}]
A \bit{flagged $\cV$-enriched $(\infty,1)$-category} $\cC$ is the data of an $\infty$-groupoid $\iC \in \Spaces$, called its \bit{underlying $\infty$-groupoid}, equipped with a right-lax functor
\[
\begin{tikzcd}
\cdiC
\arrow{rr}{\ulhom_\cC}
\arrow{rd}
&
&
\BV
\arrow{ld}
\\
&
\bDelta^{\op}
\end{tikzcd} \]
between category-objects in $\Cat_1$.  
These form an evident $\infty$-category, which we denote by $\fCat_1(\cV)$.
\end{definition}

\subsection{Right-lax functoriality of factorization homology}
\label{subsection.catfied.fact.hlgy.of.left.lax.functors}

Fix a flagged $\cV$-enriched $(\infty,1)$-category $\cC$.  In order to define its enriched factorization homology, we will want to construct a morphism
\[ \begin{tikzcd}[column sep=2cm]
\dint_{\! \! \! |\D} \cdiC
\arrow[dashed]{rr}{\int_{|\D} \ulhom_\cC}
\arrow{dr}
&
&
\dint_{\! \! \! |\D} \BV
\arrow{ld}
\\
&
\D
&
\end{tikzcd} \]
in $\Cat_{\cocart/\D}$.  However, this does not come for free: a priori, the right Kan extension
\[
\Fun(\bDelta^\op,\Cat_1)
\xra{\rho_*}
\Fun(\D,\Cat_1)
\]
of \Cref{definition.cartesian.fact.hlgy} is only functorial for \textit{strict} (and not right-lax) functors among category-objects.  To obtain it, we will exploit the closed-active factorization system on $\bDelta^\op$ via the following result.

\begin{prop}
\label{freely.add.active.cocart}
There exists a left adjoint
\begin{equation}
\label{adjn.free.act}
\begin{tikzcd}[column sep=2cm]
\Cat^{\cls}_{\cocart/\bDelta^{\op}}
\arrow[dashed, transform canvas={yshift=0.9ex}]{r}{\Freeact}
\arrow[\surjmonoleft, transform canvas={yshift=-0.9ex}]{r}[yshift=-0.2ex]{\bot}
&
\coCart_{\bDelta^{\op}}
\end{tikzcd}
\end{equation}
to the surjective monomorphism, which takes an object $(\cE \ra \bDelta^\op) \in \Cat^\cls_{\cocart/\bDelta^\op}$ to the horizontal composite in the diagram
\[
\begin{tikzcd}
\cE
\underset{\bDelta^\op}{\times}
\Ar^{\act}(\bDelta^\op)
\arrow{r}
\arrow{d}
&
\Ar^{\act}(\bDelta^\op)
\arrow{r}{\ev_t}
\arrow{d}{\ev_s}
&
\bDelta^\op
\\
\cE
\arrow{r}
&
\bDelta^\op
\end{tikzcd}~.
\]
\end{prop}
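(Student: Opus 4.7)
The plan is to exhibit the claimed formula as the value of a left adjoint by constructing an explicit unit and verifying the universal property directly. The underlying picture is that an object of $\Cat^{\cls}_{\cocart/\bDelta^\op}$ already knows how to push forward along closed morphisms, and what needs to be adjoined, freely, is the datum of cocartesian pushforwards along active morphisms. The arrow category $\Ar^{\act}(\bDelta^\op) \to \bDelta^\op$ with structure map $\ev_t$ is the universal carrier for such data, and pulling it back along $\cE \to \bDelta^\op$ (via $\ev_s$) produces precisely the formal free construction.

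First I would verify that $\Freeact(\cE) := \cE \times_{\bDelta^\op} \Ar^{\act}(\bDelta^\op) \to \bDelta^\op$, with structure map induced by $\ev_t$, is a cocartesian fibration. Given an object $(e, \alpha \colon b \to b')$ over $b'$ and a morphism $g \colon b' \to b''$ in $\bDelta^\op$, I would invoke the closed-active factorization system of \Cref{obs.fact.syst.on.M} restricted to $\bDelta^\op$ to factor $g \circ \alpha$ as $b \xra{\gamma} b''' \xra{\beta} b''$ with $\gamma$ closed and $\beta$ active. Since $\cE \to \bDelta^\op$ is a morphism in $\Cat^{\cls}_{\cocart/\bDelta^\op}$, there is a cocartesian lift $\tilde{\gamma} \colon e \to e'''$ of $\gamma$ in $\cE$; the candidate cocartesian lift of $g$ at $(e, \alpha)$ is then the morphism $(e, \alpha) \to (e''', \beta)$ assembled from $\tilde{\gamma}$ together with the commuting square $(\gamma, g)$ in $\Ar^{\act}(\bDelta^\op)$. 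Cocartesianness is verified by unfolding the mapping-space description of $\Freeact(\cE)$ and combining the universal property of the factorization with the cocartesianness of $\tilde\gamma$.

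Next I would construct the unit $\eta_\cE \colon \cE \to \Freeact(\cE)$ by $e \mapsto (e, \mathrm{id}_{\pi(e)})$, using that identities are active. To see $\eta_\cE$ is a morphism in $\Cat^{\cls}_{\cocart/\bDelta^\op}$, note that for a closed morphism $f \colon b \to b'$ the factorization of $f \circ \mathrm{id}_b = f$ is $f$ (closed) followed by $\mathrm{id}_{b'}$ (active); the cocartesian lift of $f$ in $\Freeact(\cE)$ at $(e, \mathrm{id}_b)$ thus coincides with the image under $\eta_\cE$ of the cocartesian lift of $f$ in $\cE$. For the universal property, given a cocartesian fibration $\cF \to \bDelta^\op$ and a right-lax functor $\phi \colon \cE \to \cF$ over $\bDelta^\op$, I would define the extension $\tilde\phi \colon \Freeact(\cE) \to \cF$ by $(e, \alpha) \mapsto \alpha_! \phi(e)$, the cocartesian pushforward in $\cF$ along $\alpha$. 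Uniqueness of the strict extension is forced by the fact that any extension must preserve cocartesian lifts of active morphisms, together with the identification $(e, \alpha) \simeq \alpha_!(\eta_\cE(e))$ inside $\Freeact(\cE)$; existence and compatibility with $\phi$ on the closed part follow because $\tilde\phi \circ \eta_\cE$ sends $e$ to $(\mathrm{id})_!\phi(e) = \phi(e)$.

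The main obstacle will be organizing the $\infty$-categorical coherences: one must verify that morphism spaces in $\Freeact(\cE)$ decompose as expected (a square in $\bDelta^\op$ whose closed leg is lifted cocartesianly in $\cE$), and that composition in $\Freeact(\cE)$ corresponds to composition of cocartesian pushforwards via associativity of the factorization system. Once this combinatorial bookkeeping is in place, the universal property becomes essentially formal. I would likely extract these coherences from the general theory of factorization systems developed in the appendix (\Cref{section.fact.syst}), rather than verifying them by hand, since the argument is really an instance of a general phenomenon: for any factorization system $[L;R]$ on a base $\cB$, the inclusion $\coCart_\cB \hookrightarrow \Cat^L_{\cocart/\cB}$ admits a left adjoint given by pullback along $\ev_s$ of $\Ar^R(\cB) \xra{\ev_t} \cB$.
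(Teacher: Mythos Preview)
Your proposal is correct and lands on exactly the paper's approach: the paper's proof consists entirely of the observation you make in your final paragraph, namely that this is the special case $\cB=\bDelta^\op$, $[\cB_0;\cB_1]=[\bDelta^{\op,\cls};\bDelta^{\op,\act}]$ of the general result \Cref{freely.add.cocart.for.second.factor}. The explicit unit and cocartesian-lift constructions you sketch are precisely those appearing in the proof of that general result (and its supporting \Cref{fact.syst.gives.restricted.cocart.fibn}), so your hands-on argument simply unwinds what the paper packages once and for all in the appendix.
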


\begin{proof}
This is the special case of \Cref{freely.add.cocart.for.second.factor} in which we take $\cB = \bDelta^\op$ and $[\cB_0;\cB_1] = [\bDelta^{\op,\cls};\bDelta^{\op,\act}]$.
\end{proof}

\begin{remark}
As suggested by the notation, the left adjoint in \Cref{freely.add.active.cocart} can be informally described as freely adjoining cocartesian lifts for active morphisms in $\bDelta^\op$; this generalizes the ``free cocartesian fibration'' construction of \cite{GHN}.
\end{remark}

\begin{notation}
For a space $V \in \Spaces$, we just write
\[
\Freeact(V)
:=
\Freeact(\cd(V))
~,
\]
for simplicity.
\end{notation}

\begin{observation}
\label{obs.describe.Free.act}
The object $(\Freeact(V) \da \bDelta^\op) \in \coCart_{\bDelta^\op}$ can be heuristically described as follows: an object in the fiber over $[n]^\circ \in \bDelta^\op$ is given by the data of an active morphism $[m]^\circ \ra [n]^\circ$ and a labeling $[m] \ra V$ (of the elements of the poset $[m]$ by points in $V$).  In particular, it defines (i.e., straightens to) a category-object in $\Cat_1$,
\begin{itemize}
\item whose $\infty$-category of objects is $V \in \Spaces \subset \Cat_1$,
\item in which a morphism from $v_0 \in V$ to $v_1 \in V$ is given by the data of
\begin{itemize}
\item a natural number $m \geq 1$ (corresponding to the unique active morphism $[m]^\circ \ra [1]^\circ$) along with
\item a sequence
\[
v_0=w_0
~,~
w_1
~,~
w_2
~,~
\ldots
~,~
w_m = v_1
\]
of $m+1$ points in $V$, starting with $v_0$ and ending with $v_1$,
\end{itemize}
and
\item in which composition is given by concatenation of sequences.
\end{itemize}
\end{observation}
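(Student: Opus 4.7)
My plan is to unpack the explicit formula for $\Freeact$ provided by \Cref{freely.add.active.cocart} when applied to $\cd(V)$, and to verify the heuristic description by direct analysis of the resulting cocartesian fibration.

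First, the formula specializes to $\Freeact(V) = \cd(V) \times_{\bDelta^\op} \Ar^\act(\bDelta^\op)$ with projection to $\bDelta^\op$ via $\ev_t$. By the definition of the pullback, an object over $[n]^\circ$ consists of an object of $\Ar^\act(\bDelta^\op)$ lying over $[n]^\circ$ via $\ev_t$---that is, an active morphism $\alpha \colon [m]^\circ \to [n]^\circ$---together with a compatible object of $\cd(V)$ over $\ev_s(\alpha) = [m]^\circ$. Since $\cd(V)_{[m]^\circ} \simeq V^{\times(m+1)}$ by the definition of $\cd$ as a right Kan extension from $\{[0]^\circ\}$, and points of $V^{\times(m+1)}$ correspond precisely to labelings $[m] \to V$, this establishes the description of objects.

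Next, I would verify that $\Freeact(V) \to \bDelta^\op$ straightens to a category object with the indicated object and morphism structure. The fiber over $[0]^\circ$ is identified with $V$: active morphisms $[m]^\circ \to [0]^\circ$ in $\bDelta^\op$ correspond to endpoint-preserving maps $[0] \to [m]$ in $\bDelta$, which forces $m = 0$. The Segal condition reduces to the combinatorial fact that any active morphism $[m]^\circ \to [n]^\circ$ corresponds to a partition $0 = a_0 \leq a_1 \leq \cdots \leq a_n = m$, which canonically decomposes a labeling $\lambda \colon [m] \to V$ into $n$ subsequences sharing endpoints; this exhibits the fiber over $[n]^\circ$ as the $n$-fold pullback of $\Freeact(V)_{[1]^\circ}$ over $\Freeact(V)_{[0]^\circ} = V$.

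Finally, I would verify source, target, and composition by computing cocartesian pushforwards. Cocartesian lifts in the pullback (with respect to $\ev_t$) are assembled by first taking the closed-active factorization of the postcomposition (which supplies the cocartesian lift in $\Ar^\act(\bDelta^\op) \to \bDelta^\op$), and then applying the resulting closed pushforward in $\cd(V)$. For the source map along the closed morphism $\delta_1^\op \colon [1]^\circ \to [0]^\circ$, the composite $\delta_1^\op \circ \alpha$ is itself closed (any map $[0] \to [m]$ in $\bDelta$ is an interval inclusion), and restricting $\lambda$ along this inclusion yields $\lambda(0) = w_0$; symmetrically the target is $w_m$. For composition along the inner face $\delta_1^\op \colon [2]^\circ \to [1]^\circ$ (which is active in $\bDelta^\op$), the composite is already active, so the closed part of the factorization is trivial and $\lambda$ is preserved wholesale, realizing composition as concatenation of sequences.

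The main obstacle is the careful identification of cocartesian lifts in the pullback, which requires simultaneously tracking the closed-active factorization on $\bDelta^\op$ and the cocartesian pushforwards in $\cd(V)$. A minor subtlety is that the observation's restriction to $m \geq 1$ elides the $m = 0$ degenerate morphisms; these correspond to the identity morphisms implicit in the category object's degeneracy maps.
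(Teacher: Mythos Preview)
The paper states this as an \textbf{Observation} without proof; it is meant to be read off directly from the explicit formula for $\Freeact$ given in \Cref{freely.add.active.cocart} (together with the description of cocartesian lifts in \Cref{fact.syst.gives.restricted.cocart.fibn}). Your unpacking carries out precisely this verification: you correctly identify the fibers via the pullback description, compute cocartesian pushforwards by first applying the closed--active factorization in $\Ar^\act(\bDelta^\op)$ and then the (closed) cocartesian monodromy in $\cd(V)$, and deduce the source, target, and composition descriptions. Your observation about the elided $m=0$ case is also apt --- the active morphism $[0]^\circ \to [1]^\circ$ (corresponding to the degeneracy $\sigma_0$) does contribute objects to the fiber over $[1]^\circ$, and these are exactly the images of the unit map, which the paper's heuristic simply omits.
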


\begin{remark}
For a space $V \in \Spaces \subset \Cat_1$, the object $\Freeact(V) \in \coCart_{\bDelta^\op}$ is a ``generalized'' monoidal $\infty$-category (in the sense of the generalized $\infty$-operads of \cite[\S 2.3.2]{LurieHA}): it satisfies the Segal condition, but its fiber over $[0]^\circ \in \bDelta^\op$ isn't contractible.  Modulo this detail, the unit map
\[ \begin{tikzcd}
\cd(V)
\arrow{rr}
\arrow{rd}
&
&
\Freeact(V)
\arrow{ld}
\\
&
\bDelta^\op
\end{tikzcd} \]
may be thought of as ``the free enriched flagged $\infty$-category with underlying $\infty$-groupoid $V$'' (with functoriality for \textit{strict} monoidal functors of generalized monoidal $\infty$-categories).
\end{remark}

\begin{construction}
\label{construct.hom.dagger}
For any flagged $\cV$-enriched $(\infty,1)$-category $\cC$, by \Cref{freely.add.active.cocart} there exists a canonical factorization
\[
\begin{tikzcd}[column sep=1.5cm]
\cdiC
\arrow{r}
\arrow[bend left]{rr}{\ulhom_\cC}
\arrow{rd}
&
\Freeact(\iC)
\arrow[dashed]{r}{\ulhom_\cC^\dagger}
\arrow{d}
&
\BV
\arrow{ld}
\\
&
\bDelta^{\op}
\end{tikzcd}
\]
in which
\begin{itemize}
\item the first map is the unit of the adjunction \Cref{adjn.free.act}, and so lies in $\Cat^\cls_{\cocart/\bDelta^\op}$, and
\item the second map lies in $\coCart_{\bDelta^\op}$.
\end{itemize}
The latter map is taken by the right Kan extension
\[
\coCart_{\bDelta^\op}
\simeq
\Fun(\bDelta^\op,\Cat_1)
\xra{\rho_*}
\Fun(\D,\Cat_1)
\simeq
\coCart_\D
\]
 to a map
\begin{equation}
\label{int.D.of.hom.C.dagger}
\begin{tikzcd}[column sep=1.5cm]
\dint_{\! \! \! |\D} \Freeact(\iC)
\arrow{r}{\int_{|\D} \ulhom_\cC^\dagger}
\arrow{d}
&
\dint_{\! \! \! |\D} \BV
\arrow{ld}
\\
\D
\end{tikzcd}
\end{equation}
in $\coCart_\D$.
\end{construction}

\begin{observation}
\label{fact.hlgy.of.iCj.as.subcat}
There is a canonical monomorphism
\begin{equation}
\label{fact.hlgy.over.D.of.unit}
\begin{tikzcd}
\dint_{\! \! \! |\D} \cdiC
\arrow[hook]{r}
\arrow{d}
&
\dint_{\! \! \! |\D} \Freeact(\iC)
\arrow{ld}
\\
\D
\end{tikzcd}
\end{equation}
in $\Cat_{\cocart/\D}$, namely the identification of the source as the full subcategory of the target on those disk-stratified stratified spaces whose framed intervals are labeled by morphisms in $\Freeact(\iC)$ associated to the natural number 1 (corresponding to the unique active morphism $[1]^\circ \ra [1]^\circ$ (recall \Cref{obs.describe.Free.act})) -- that is, in which the sequence of points of $\iC$ has exactly two elements.
\end{observation}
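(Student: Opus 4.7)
I will identify the claimed monomorphism by working fiberwise over $\D$ using the explicit Segal-type description of factorization homology supplied by \Cref{cat.fact.hlgy.over.disk.strat}, then check naturality in $R \in \D$.

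First I will unpack the fibers.  For $R \in \D$, \Cref{cat.fact.hlgy.over.disk.strat} presents $\int_R \cdiC$ as a limit (indexed by the incidence data of $R$) of $\cdiC_{|[0]^\circ} = \iC$ over vertices and $\cdiC_{|[1]^\circ} = \iC^{\times 2}$ over edges, glued by source/target.  Because $\cdiC$ is codiscrete, the edge data is completely determined by the vertex data, so this limit collapses to $(\iC)^{\times R^{(0)}}$, i.e.\ the space of vertex-labelings.  Similarly, using the description of \Cref{obs.describe.Free.act}, the fiber $\int_R \Freeact(\iC)$ consists of a vertex-labeling $\lambda$ together with, for each edge $e$, a choice of $m_e \geq 1$ and a length-$(m_e+1)$ sequence of points of $\iC$ starting at $\lambda(s(e))$ and ending at $\lambda(t(e))$.

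Next I will define the inclusion fiberwise and verify it is a full monomorphism.  The inclusion sends a vertex-labeling $\lambda$ to the unique extension with $m_e = 1$ and edge-sequence $(\lambda(s(e)), \lambda(t(e)))$ for every edge.  At each edge this is the inclusion of the length-$1$ summand of $\coprod_{m \geq 1} \iC^{\times (m+1)}$, hence a monomorphism; moreover, it is a \emph{full} subcategory inclusion because a morphism between two length-$1$ objects of $\Freeact(\iC)_{|[1]^\circ}$ is forced to have its controlling active morphism equal to $\mathrm{id}\colon[1]\to[1]$ (since this is the only active arrow $[1]\to[1]$ in $\bDelta$), so the hom-space reduces to a pair of paths in $\iC$ --- matching what one sees in $\int_R \cdiC$.

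Finally I will promote the pointwise identification to a monomorphism over $\D$.  The crucial observation is that the Segal-type description of \Cref{cat.fact.hlgy.over.disk.strat} is built entirely from the \emph{closed} structural morphisms $\delta_0,\delta_1 \colon [1]^\circ \to [0]^\circ$, and the unit $\cdiC \to \Freeact(\iC)$ of the adjunction \Cref{freely.add.active.cocart} is by construction a morphism in $\Cat^{\cls}_{\cocart/\bDelta^\op}$ --- so it respects precisely this closed functoriality.  Hence the fiberwise inclusions assemble into a monomorphism in $\Cat_{\cocart/\D}$ with the claimed image.  The main subtlety to keep in mind is that this unit is \emph{not} a natural transformation of functors $\bDelta^\op \to \Cat$ (it fails to commute with cocartesian pushforward along active morphisms in $\bDelta^\op$, as one can check using an active $[n]^\circ \to [1]^\circ$), so the map genuinely must be produced from the Segal description rather than by the naive maneuver of applying $\brax{-}_*$ to a morphism in $\coCart_{\bDelta^\op}$.
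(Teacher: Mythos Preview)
Your proposal is correct and is essentially an elaboration of the paper's own treatment, which records this as a bare Observation without further argument: you correctly unpack the fibers via \Cref{cat.fact.hlgy.over.disk.strat} and \Cref{obs.describe.Free.act}, and you correctly flag the subtlety that one cannot simply apply $\brax{-}_*$ to the unit. The assembly over $\D$ could be phrased more directly --- since the unit $\cdiC \hookrightarrow \Freeact(\iC)$ is already a fully faithful functor of total $\infty$-categories (your argument for $[1]^\circ$ goes through verbatim over every $[n]^\circ$, as a morphism between identity-active objects of $\Freeact(\iC)$ is forced to lie over an identity square in $\Ar^{\act}(\bDelta^\op)$), the full subcategory on length-$1$ objects in $\int_{|\D}\Freeact(\iC)$ automatically inherits the correct hom-spaces and hence agrees with $\int_{|\D}\cdiC$ over $\D$ --- but this is a minor tightening of your last paragraph rather than a genuine gap.
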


\begin{construction}
\label{construct.fact.hlgy.over.D.of.hom.C}
From \Cref{construct.hom.dagger} \and \Cref{fact.hlgy.of.iCj.as.subcat}, we obtain a canonical composite
\begin{equation}
\label{composite.giving.categorified.fact.hlgy.over.D}
\begin{tikzcd}[column sep=1.5cm]
\dint_{\! \! \! |\D} \cdiC
\arrow[hook]{r}{\Cref{fact.hlgy.over.D.of.unit}}
\arrow[dashed, bend left]{rr}{\int_{|\D} \ulhom_\cC}
\arrow{d}
&
\dint_{\! \! \! |\D} \Freeact(\iC)
\arrow{r}{\int_{|\D} \ulhom_\cC^\dagger}
\arrow{ld}
&
\dint_{\! \! \! |\D} \BV
\arrow[bend left=15]{lld}
\\
\D
\end{tikzcd}
\end{equation}
in $\Cat_{\cocart/\D}$.
\end{construction}

\begin{remark}
The composite \Cref{composite.giving.categorified.fact.hlgy.over.D} takes an object
\[
\left(
D \in \D
~,~
D^{(0)} \xlongra{\lambda} \iC
\right)
\in
\int_{|\D} \cdiC
\]
to the object
\[
\left( D \in \D
~,~
D^{(1)} \longra \cV
\right)
\in
\int_{|\D} \BV
\]
determined by the prescription
\[ \begin{tikzcd}[row sep=0cm]
D^{(1)}
\arrow{r}
&
\cV
\\
\rotatebox{90}{$\in$}
&
\rotatebox{90}{$\in$}
\\
e
\arrow[mapsto]{r}
&
\ulhom_\cC(\lambda(s(e)),\lambda(t(e)))
\end{tikzcd}~. \]
The source is a left fibration over $\D$, so every morphism therein is cocartesian over $\D$.  In view of  \Cref{obs.fact.syst.on.M}, the action of the composite \Cref{composite.giving.categorified.fact.hlgy.over.D} is completely specified by the following facts.
\begin{itemize}

\item A closed morphism $D \ra D'$ in $\D$ determines a commutative diagram
\[ \begin{tikzcd}
D^{(0)}
\arrow[hookleftarrow]{r}
&
D'^{(0)}
\\
D^{(1)}
\arrow[hookleftarrow]{r}
\arrow{u}{s}
\arrow{d}[swap]{t}
&
D'^{(1)}
\arrow{u}[swap]{s}
\arrow{d}{t}
\\
D^{(0)}
\arrow[hookleftarrow]{r}
&
D'^{(0)}
\end{tikzcd} \]
of finite sets.  A (necessarily cocartesian) morphism in the source $\int_{|\D} \cdiC$ over this morphism in $\D$ must be determined by restriction along the map $D'^{(0)} \ra D^{(0)}$: the labeling of the vertices of $D'$ must be pulled back from those of $D$ via the defining proper constructible bundle.  The composite \Cref{composite.giving.categorified.fact.hlgy.over.D} then takes this to another cocartesian morphism in the target $\int_{|\D} \BV$, likewise given by restriction along the monomorphism $D'^{(1)} \ra D^{(1)}$.

\item A creation morphism $D \ra D'$ in $\D$ determines a commutative diagram
\[ \begin{tikzcd}
D^{(0)}
&
&
D'^{(0)}
\arrow[two heads]{ll}
\\
D^{(1)}
\arrow{u}{s}
\arrow{d}[swap]{t}
&
\left( D'^{(1)} \right)_{|D^{(1)}}
\arrow[two heads]{l}
\arrow[hook]{r}
&
D'^{(1)}
\arrow{u}[swap]{s}
\arrow{d}{t}
\\
D^{(0)}
&
&
D'^{(0)}
\arrow[two heads]{ll}
\end{tikzcd} \]
of finite sets, where the restriction refers to the corresponding proper constructible bundle.  Specifically, the elements of
\begin{equation}
\label{edges.in.R.prime.projecting.to.vertices.in.R}
\left( D'^{(1)} \right)_{|D^{(0)}}
\cong
D'^{(1)}
\backslash
\left( D'^{(1)} \right)_{|D^{(1)}}
\end{equation}
correspond to edges in $D'$ that project to vertices in $D$.  The cocartesian monodromy in $\int_{|\D}\BV$ over this creation morphism is given by the composite
\[
\cV^{\times D^{(1)}}
\longra
\cV^{\times \left( D'^{(1)} \right)_{|D^{(1)}}}
\longra
\cV^{\times D'^{(1)}}
\]
of the pullback followed by the functor taking all elements of the set \Cref{edges.in.R.prime.projecting.to.vertices.in.R} to the unit object $\uno_\cV \in \cV$.  Then, a (necessarily cocartesian) morphism in the source $\int_{|\D} \cdiC$ over this morphism in $\D$ whose source is an object
\[
D^{(0)}
\xlongra{\lambda}
\cV
\]
must have target the object
\[
D'^{(0)}
\xra{\varphi^{(0)}}
D^{(0)}
\xlongra{\lambda}
\cV
~.
\]
Then, the composite \Cref{composite.giving.categorified.fact.hlgy.over.D} takes this to a morphism in $\int_{|\D} \BV$ which, after taking its cocartesian-active factorization, is given in the factor of $\cV$ corresponding to an element $e \in D'^{(1)}$ by
\begin{itemize}
\item the identity map of the object
\[
\ulhom_\cC \left( \lambda \left(\varphi^{(0)}(s(e)) \right),\lambda \left( \varphi^{(0)}(t(e)) \right) \right)
\]
if $e \in \left( D'^{(1)} \right)_{|D^{(1)}}$, and
\item the unit map
\[
\uno_\cV
\longra
\ulhom_\cC \left( \lambda \left(\varphi^{(0)}(s(e)) \right),\lambda \left( \varphi^{(0)}(t(e)) \right) \right)
\]
if $e \notin \left( D'^{(1)} \right)_{|D^{(1)}}$ (whose existence arises from the application of the map $\lambda$ to the identification
\[
\varphi^{(0)} ( s(e)) \cong \varphi^{(0)} ( t(e))
\]
in $D^{(0)}$).
\end{itemize}

\item A refinement morphism $D \ra D'$ in $\D$ determines a commutative diagram
\[ \begin{tikzcd}
D^{(0)}
\arrow[hookleftarrow]{r}
&
D'^{(0)}
\\
D^{(1)}
\arrow[two heads]{r}
\arrow{u}{s}
\arrow{d}[swap]{t}
&
D'^{(1)}
\arrow{u}[swap]{s}
\arrow{d}{t}
\\
D^{(0)}
\arrow[hookleftarrow]{r}
&
D'^{(0)}
\end{tikzcd} \]
of finite sets.  Now, a (necessarily cocartesian) morphism in the source $\int_{|\D} \cdiC$ over this morphism in $\D$ is taken by the composite \Cref{composite.giving.categorified.fact.hlgy.over.D} to a morphism in $\int_{|\D} \BV$ determined by the composition maps of $\cC$.
\end{itemize}
\end{remark}

\begin{observation}
\label{obs.cocart.over.cls.and.some.cr}
The composite morphism \Cref{composite.giving.categorified.fact.hlgy.over.D} (or more primitively, the morphism \Cref{fact.hlgy.over.D.of.unit}) in $\Cat_{\cocart/\D}$ actually preserves cocartesian morphisms over certain morphisms in $\D$: the closed morphisms, as well as the creation maps corresponding contravariantly to \textit{stratified covering spaces}.\footnote{Such a morphism $D \ra D'$ in $\D$ determines a diagram
\[ \begin{tikzcd}[ampersand replacement=\&]
D^{(0)}
\&
D'^{(0)}
\arrow{l}
\\
D^{(1)}
\arrow{u}{s}
\arrow{d}[swap]{t}
\&
D'^{(1)}
\arrow{l}
\arrow{u}[swap]{s}
\arrow{d}{t}
\\
D^{(0)}
\&
D'^{(0)}
\arrow{l}
\end{tikzcd} \]
of finite sets.  A (necessarily cocartesian) morphism in $\int_{|\D} \cdiC$ over this morphism in $\D$ must start at an object $D^{(0)} \ra \iC$ and end at the composite $D'^{(0)} \ra D^{(0)} \ra \iC$, and its image under the composite \Cref{composite.giving.categorified.fact.hlgy.over.D} will start at an object $D^{(1)} \ra \cV$ and likewise end at the composite $D'^{(1)} \ra D^{(1)} \ra \cV$.}  Note in particular that the latter class includes (the images in $\D$ of) the cocartesian morphisms in the cocartesian fibration $\epicyc \da \BW$.  Hence, when the composite morphism \Cref{composite.giving.categorified.fact.hlgy.over.D} in $\Cat_{\cocart/\D}$ is pulled back along the composite
\[
\epicyc
\longra
\totalD
\xlongra{s}
\D
~,
\]
it defines a morphism in $\coCart_\BW \subset \Cat_{\cocart/\BW}$.
\end{observation}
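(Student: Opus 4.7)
The plan is to leverage the fact that the second factor $\int_{|\D}\ulhom_\cC^\dagger$ of the composite \eqref{composite.giving.categorified.fact.hlgy.over.D} already lies in $\coCart_\D$ by construction: it was obtained from a morphism in $\coCart_{\bDelta^\op}$ via right Kan extension along $\brax{-}$, which corresponds to an equivalence of $\infty$-categories of cocartesian fibrations. So verifying that the composite preserves a cocartesian lift of a morphism $f$ of $\D$ reduces to showing that the cocartesian lift of $f$ in $\int_{|\D}\Freeact(\iC)$, when its source lies in the full subcategory $\int_{|\D}\cdiC$, has target again in that subcategory.

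I would then invoke \Cref{fact.hlgy.of.iCj.as.subcat} to make the inclusion concrete: $\int_{|\D}\cdiC$ sits inside $\int_{|\D}\Freeact(\iC)$ as the full subcategory on those objects whose edge labels all correspond to the unique active morphism $[1]^\circ \to [1]^\circ$, equivalently are length-$2$ sequences $(v_0,v_1)$ of points in $\iC$ as opposed to longer sequences $(v_0,v_1,\ldots,v_m)$. The problem becomes a bookkeeping check: verify that the cocartesian lifts of closed morphisms and of stratified-covering creation morphisms in $\D$ preserve this length-$2$ condition on edge labels, drawing on the combinatorial action already spelled out in the remark immediately preceding \Cref{obs.cocart.over.cls.and.some.cr}.

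For a closed morphism $R \to R'$ the cocartesian lift is restriction of labels along the monomorphisms $R'^{(0)} \hookra R^{(0)}$ and $R'^{(1)} \hookra R^{(1)}$, and a length-$2$ labeling restricts to a length-$2$ labeling on any subset of edges. For a creation morphism $R \to R'$ arising from a stratified covering $\varphi\colon R'\to R$, local triviality of $\varphi$ automatically enforces condition (**) of \Cref{functoriality.for.D.of}: every edge of $R'$ covers an edge of $R$ and none collapses to a vertex, so $(R'^{(1)})_{|R^{(1)}} = R'^{(1)}$ and the cocartesian lift is honest pullback of labels along $\varphi$. No edges acquire the length-$1$ ``unit'' sequence label that would otherwise be inserted on collapsing edges, so length-$2$ edge labels pull back to length-$2$ edge labels. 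These are the only two cases to address, so the subcategory is preserved.

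For the final assertion: by \Cref{pullback.of.disk.stratns.along.stratified.covering.spaces} the cocartesian pushforwards in $\epicyc \to \BW$ are computed by pullback along stratified coverings $S^1 \to S^1$, whose images under $\epicyc \to \totalD \xra{s} \D$ are precisely stratified-covering creation morphisms. Pulling the composite \eqref{composite.giving.categorified.fact.hlgy.over.D} back along this functor therefore yields a morphism in $\coCart_\BW \subset \Cat_{\cocart/\BW}$. The main subtlety in the plan is keeping straight the direction conventions between $\D$, $\M$, and the classes of proper constructible bundles -- in particular, identifying (**) as exactly the condition preventing unit-insertion -- but the remark's explicit bulleted case description has already done this work, so the verification presents no real obstacle.
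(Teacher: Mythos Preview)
Your proposal is correct and takes essentially the same approach as the paper: the paper treats this as an Observation whose justification is the footnote, which amounts to exactly your check that cocartesian pushforward along closed morphisms and stratified-covering creations is given by pullback of labels on vertices and edges (with no unit-insertion, since condition~(**) holds), hence preserves the ``length-$2$'' subcategory $\int_{|\D}\cdiC \hookrightarrow \int_{|\D}\Freeact(\iC)$. Your explicit reduction---noting that the second factor $\int_{|\D}\ulhom_\cC^\dagger$ already lies in $\coCart_\D$, so only the full inclusion needs checking---is a bit more carefully articulated than the paper's terse footnote, but the underlying verification is the same.
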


\section{Enriched factorization homology}
\label{section.enriched.fact.hlgy}

In this section, we study enriched factorization homology.  We begin in \Cref{subsection.s.m.cats} with some preliminaries on symmetric monoidal $\infty$-categories as well as a study of the functoriality of 1-dimensional strata on the $\infty$-category $\D$.  Then, in \Cref{subsection.enr.fact.hlgy} we construct enriched factorization homology over a fixed object $M \in \M$ (as described in \Cref{subsection.fact.hlgy.intro}) and equip it with a canonical action of its automorphism group.

\subsection{Preliminaries on symmetric monoidal $\infty$-categories}
\label{subsection.s.m.cats}

\begin{notation}
For a symmetric monoidal $\infty$-category $(\cV,\boxtimes)$, we write
\begin{equation}
\label{symm.mon.deloop}
\begin{tikzcd}
\BSVbox
\arrow{d}
\\
\Fin_*
\end{tikzcd}
\end{equation}
for its \textit{symmetric monoidal deloop}, i.e., its corresponding $\infty$-operad.  When the symmetric monoidal structure is understood, we simply write
\[
\BSV
:=
\BSVbox
~.
\]
\end{notation}

\begin{notation}
Recall that the functor \Cref{symm.mon.deloop} restricts to a cocartesian fibration when pulled back along the surjective monomorphism
\[
\Fin
\longrsurjmono
\Fin_*
\]
from the subcategory of \textit{active} morphisms.  Hence, the terminal maps in $\Fin$ collectively induce a \textit{total tensor product} functor to the fiber $\BSV_{|\pt} \simeq \cV$, which we denote by
\begin{equation}
\label{tensor.everything.together.over.Fin}
\BSV_{|\Fin}
\xlongra{\bigboxtimes}
\cV
~.
\end{equation}
\end{notation}

Whereas we can only generally take indexed tensor products in $\cV$ over morphisms in $\Fin$, the morphisms in $\D$ can have more exotic behavior on 1-dimensional strata.  This behavior is encoded as follows.

\begin{definition}
\label{define.span.fin}
We write
\[
\Corr(\Fin)
\]
for the $\infty$-category of \bit{correspondences of finite sets}: its objects are finite sets, a morphism from $S$ to $T$ is given by a span
\begin{equation}
\label{typical.morphism.in.Corr.Fin}
\begin{tikzcd}
&
U
\arrow{ld}[swap]{\varphi}
\arrow{rd}{\psi}
\\
S
&
&
T
\end{tikzcd}
\end{equation}
of finite sets, and composition is given by pullback.\footnote{This can be easily constructed e.g.\! as a complete Segal space, using the fact that $\Fin$ admits finite limits.}\footnote{In fact, $\Corr(\Fin)$ is a $(2,1)$-category: for any finite sets $S,T \in \Fin$ we have an equivalence $\hom_{\Corr(\Fin)}(S,T) \simeq (\Fin_{/(S \times T)})^\simeq$ of 1-groupoids.}
\end{definition}

\begin{observation}
There is a canonical composite
\[
\Fin
\longrsurjmono
\Fin_*
\longrsurjmono
\Corr(\Fin)
\]
of surjective monomorphisms, in which the first functor adds a disjoint basepoint and the second functor takes a map $S_+ \xra{\varphi} T_+$ to the span
\[ \begin{tikzcd}
&
\varphi^{-1}(T)
\arrow{ld}
\arrow{rd}
\\
S
&
&
T
\end{tikzcd} \]
given by the preimage of the subset $T \subset T_+$.
\end{observation}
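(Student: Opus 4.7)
The plan is to define each functor concretely on objects and morphisms, verify functoriality, and then check the two properties characterizing surjective monomorphisms (surjectivity on objects up to equivalence, and faithfulness).

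First, the functor $\Fin \longrsurjmono \Fin_*$ is the classical ``add a disjoint basepoint'' functor $S \mapsto S_+ := S \sqcup \{\star\}$, sending $f : S \to T$ to the pointed map $f_+ : S_+ \to T_+$ extending $f$ by $f_+(\star) = \star$. Every pointed finite set $(T, t_0)$ is canonically equivalent to $(T \setminus \{t_0\})_+$, so this is surjective on objects up to equivalence; faithfulness is immediate, as $f$ is recovered from the restriction of $f_+$ to the complements of basepoints.

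For the second functor $\Fin_* \to \Corr(\Fin)$, I would send $S_+$ to $S$ on objects, and send a pointed map $\varphi : S_+ \to T_+$ to the span $S \leftarrow \varphi^{-1}(T) \to T$, where the inclusion and the right leg are both restrictions of $\varphi$ (and $\varphi^{-1}(T)$ lies in $S$ rather than $S_+$ because $\varphi$ preserves basepoints). The key functoriality check is that for a composable pair $\varphi : S_+ \to T_+$, $\psi : T_+ \to U_+$, the pullback composition of spans yields
\[
S \longleftarrow \varphi^{-1}(T) \underset{T}{\times} \psi^{-1}(U) \longrightarrow U~,
\]
and a direct set-theoretic computation identifies the middle object canonically with $(\psi \circ \varphi)^{-1}(U) \subset S$, which is precisely the image of $\psi \circ \varphi$ under the construction. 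Surjectivity on objects is tautological, and faithfulness follows because $\varphi$ is reconstructed from its preimage span by sending $S \setminus \varphi^{-1}(T)$ to the basepoint and using the right leg on $\varphi^{-1}(T)$.

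The main subtlety lies in presenting the second construction as an $\infty$-functor rather than merely a strict functor of 1-categories, since $\Corr(\Fin)$ is a $(2,1)$-category whose 2-morphisms are isomorphisms of spans. However, by the footnote, the hom-groupoids $\hom_{\Corr(\Fin)}(S,T) \simeq (\Fin_{/(S \times T)})^\simeq$ are ordinary 1-groupoids, and once the construction is defined strictly on objects and 1-morphisms with strict compatibility with composition (via the pullback identification above), all required 2-cells are canonical identities, so no higher coherence data needs to be supplied. One may formalize this by assembling the preimage construction into a map of the complete Segal space models, but the essential content is the elementary pullback identification already noted.
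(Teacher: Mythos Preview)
The paper states this as an Observation without proof, treating it as evident, so there is no argument to compare against. Your proof is substantially correct and appropriately detailed.

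One small refinement: for the second functor, checking faithfulness (injectivity on $\pi_0$ of hom-spaces) is not quite enough to conclude that $\Fin_* \to \Corr(\Fin)$ is a monomorphism in $\Cat_\infty$, because the target is a genuine $(2,1)$-category. A monomorphism requires that the induced maps on hom-\emph{spaces} be $(-1)$-truncated, and since the hom-sets of $\Fin_*$ are discrete, you must also verify that each span in the image has trivial automorphism group in $(\Fin_{/(S\times T)})^\simeq$. This is immediate---the left leg $\varphi^{-1}(T) \hookrightarrow S$ is a subset inclusion, so any automorphism of the span over $S \times T$ is forced to be the identity---but it is a logically separate check from the reconstruction argument you gave. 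With that addition, your argument is complete.
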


\begin{remark}
Whereas a morphism in $\Fin_*$ might be thought of as a ``partially-defined function'' (on the objects of $\Fin$ obtained by removing basepoints), a morphism in $\Corr(\Fin)$ might be thought of as a ``partially-defined multi-valued function'' (which allows for the repetition of values).
\end{remark}

\begin{notation}
\label{notation.one.d.strata.defines.corr}
A morphism in $\D$, which is in particular a proper constructible bundle $M \da \Delta^1$, determines a span
\[
M_0^{(1)}
\simeq
\Gamma \left(
\begin{tikzcd}
M_0^{(1)}
\arrow{d}
\\
\Delta^{\{0\}}
\end{tikzcd} \right)
\longla
\Gamma' \left(
\begin{tikzcd}
M
\arrow{d}
\\
\Delta^1
\end{tikzcd} \right)
\longra
\Gamma \left(
\begin{tikzcd}
M_1^{(1)}
\arrow{d}
\\
\Delta^{\{1\}}
\end{tikzcd} \right)
\simeq
M_1^{(1)}
\]
of finite sets, where
\begin{itemize}
\item we write $\Gamma$ for the space of sections (among stratified spaces), and
\item we write $\Gamma'$ for the space of sections (also among stratified spaces) that take values in the 1-dimensional stratum of each fiber.
\end{itemize}
By \cite[Lemma 3.36]{AFR-fact}, this assembles into a functor
\begin{equation}
\label{Exit.one.on.D}
\D
\xra{(-)^{(1)}}
\Corr(\Fin)
~.
\end{equation}
\end{notation}

\begin{observation}
\label{action.of.one.d.stratum.functor}
The action of the functor $(-)^{(1)}$ on the various classes of morphisms in $\D$ is as follows.
\begin{itemize}
\item It takes closed morphisms to backwards monomorphisms.
\item It takes refinement morphisms to forwards surjections.
\item A creation morphism $D \ra D'$ corresponds contravariantly to a proper constructible bundle, which determines a ``partially-defined multi-valued function'' from $D^{(1)}$ to $D'^{(1)}$ by taking an edge of $D$ to its preimage in $D'$.\footnote{Note that edges in $D'$ can project to vertices in $D$, so that the forwards map in this span need not be surjective.}
\end{itemize}
\end{observation}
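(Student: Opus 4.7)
The plan is to verify each of the three bullet points by unwinding the definition of $(-)^{(1)}$ from \Cref{notation.one.d.strata.defines.corr} directly on the cylinder descriptions of closed, refinement, and creation morphisms (from Definitions \ref{define.closed.and.creation.morphisms} \and \ref{define.refinement.morphisms}). In every case the task reduces to computing the middle term $\Gamma'$ of the span $M_0^{(1)} \la \Gamma' \ra M_1^{(1)}$, i.e.\ the (discrete) space of sections of $M \to \Delta^1$ that land in the 1-dimensional stratum fiberwise.

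First, for a closed morphism determined by an injective proper constructible bundle $\varphi : M_1 \hookra M_0$, the total space is $\cylr(M_0 \leftarrow M_1) = M_0 \coprod_{M_1 \times \{0\}} M_1 \times \Delta^1$. Any section landing in the 1-stratum over $\Delta^{\{1\}}$ is of the form $(e,1)$ for an edge $e \in M_1^{(1)}$, and its extension $(e,t)$ over $(0,1]$ glues at $t=0$ to $\varphi(e) \in M_0$; since $\varphi$ is injective on 1-strata, $\varphi(e)$ is automatically an edge, so $\Gamma' \cong M_1^{(1)}$ and the span is $(M_0^{(1)} \stackrel{\varphi^{(1)}}{\hookleftarrow} M_1^{(1)} \stackrel{=}{\ra} M_1^{(1)})$. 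This is a backwards monomorphism in $\Corr(\Fin)$.

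Next, for a refinement morphism determined by $\varphi : M_0 \ra M_1$, the total space is the open cylinder $\cylo(M_0 \ra M_1)$, whose fibers are $M_0$ over $\{0\}$ and $M_1$ over $(0,1]$. A section in the 1-stratum is $(e,0)$ at $t=0$ for $e \in M_0^{(1)}$, and for $t > 0$ it has the form $(e',t)$ for some $e' \in M_1^{(1)}$; continuity through the gluing along $\varphi$ forces $e' = \varphi(e)$, i.e.\ $e'$ is the unique edge of $M_1$ containing $e$. Thus $\Gamma' \cong M_0^{(1)}$ with span $(M_0^{(1)} \stackrel{=}{\leftarrow} M_0^{(1)} \stackrel{\varphi^{(1)}}{\twoheadra} M_1^{(1)})$, a forwards surjection.

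Finally, for a creation morphism determined by a surjective proper constructible bundle $\varphi : M_1 \ra M_0$, the total space is again $\cylr(M_0 \leftarrow M_1)$. A section is of the form $(e_1,t)$ for $e_1 \in M_1^{(1)}$ on $[0,1]$; this lands in the 1-stratum at $t=0$ precisely when $\varphi(e_1)$ is an edge of $M_0$ rather than a vertex. Hence $\Gamma' = \{ e_1 \in M_1^{(1)} \mid \varphi(e_1) \in M_0^{(1)} \}$ with forward leg the inclusion into $M_1^{(1)}$ and backward leg $e_1 \mapsto \varphi(e_1)$. Reading off the resulting span, the induced ``partially-defined multi-valued function'' $M_0^{(1)} \rightsquigarrow M_1^{(1)}$ assigns to each edge $e$ its preimage $\varphi^{-1}(e) \cap M_1^{(1)}$, as claimed.

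The main technical point on which I would want to be careful is the continuity/gluing argument in the refinement case: one needs that the open cylinder's pushout topology forces a section over $(0,1]$ valued in $M_1^{(1)}$ to extend uniquely to $M_0^{(1)}$ at $0$, which amounts to the fact that $\varphi^{(1)} : M_0^{(1)} \twoheadra M_1^{(1)}$ is a quotient of 1-manifolds by a finite vertex insertion and thus admits unique lifts on small neighborhoods. The analogous boundary analysis in the closed and creation cases is easier because the reversed cylinder has $M_0$ as a genuine fiber at $t=0$, so the only subtlety is whether $\varphi(e_1)$ is an edge or a vertex, which is exactly the source of the ``partially-defined'' behavior in the creation case.
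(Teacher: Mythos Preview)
Your proposal is correct. In the paper this statement is labeled an ``Observation'' and is given no proof at all: it is treated as immediate from the definition of $(-)^{(1)}$ in \Cref{notation.one.d.strata.defines.corr} together with the cylinder descriptions of closed, refinement, and creation morphisms. What you have written is exactly the unwinding of those definitions that the paper leaves to the reader, and your case analysis matches the intended content in each of the three bullets.

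One small remark on your closed-morphism case: you assert that for an injective proper constructible bundle $\varphi : M_1 \hookrightarrow M_0$ every edge of $M_1$ lands in an edge of $M_0$. This is true, but the reason is not just injectivity of $\varphi$ as a map of sets; it is that a proper constructible bundle carries each stratum of $M_1$ submersively to a stratum of $M_0$, so a 1-dimensional stratum cannot have 0-dimensional image. With that in hand, your identification $\Gamma' \cong M_1^{(1)}$ and the resulting backwards-monomorphism span are exactly right. Your refinement and creation analyses are likewise correct, including the observation that in the creation case the forward leg $\Gamma' \hookrightarrow M_1^{(1)}$ need not be surjective because edges of $M_1$ may project to vertices of $M_0$.
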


Absent any further assumptions on $(\cV,\boxtimes)$, the pullback
\[ \begin{tikzcd}
\D_{|\Fin}
\arrow{r}
\arrow[hook, two heads]{d}
&
\Fin
\arrow[hook, two heads]{d}
\\
\D
\arrow{r}
&
\Fin_*
\end{tikzcd} \]
dictates the functoriality of enriched factorization homology: it is the largest source on which we can expect a total tensor product functor.  This is encoded by the following result.

\begin{lemma}
\label{tensor.everything.together.from.int.BV.over.D.Fin}
For any symmetric monoidal $\infty$-category $\cV$, there is a canonical pullback square
\[ \begin{tikzcd}
\dint_{\! \! \! |\D_{|\Fin_*}} \BV
\arrow{r}
\arrow{d}
&
\BSV
\arrow{d}
\\
\D_{|\Fin_*}
\arrow{r}
&
\Fin_*
\end{tikzcd}~. \]
\end{lemma}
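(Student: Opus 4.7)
The plan is to construct a canonical comparison map via the universal property of the right Kan extension and then check it is an equivalence fiberwise.

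Since the monoidal structure on $(\cV,\boxtimes)$ is obtained from the symmetric monoidal one by pullback along the standard ``cut'' functor $\bDelta^\op \to \Fin_*$ given by $[n]^\circ \mapsto \{1,\ldots,n\}_+$, there is a canonical pullback square
\[ \begin{tikzcd}
\BV \arrow{r} \arrow{d} & \BSV \arrow{d} \\
\bDelta^\op \arrow{r} & \Fin_*
\end{tikzcd} \]
of cocartesian fibrations.  I would begin by verifying that the composite
\[ \bDelta^\op \xra{\brax{-}} \D_{|\Fin_*} \longra \Fin_* \]
agrees with this cut functor: indeed, $\brax{n}^{(1)}$ is canonically the set $\{1,\ldots,n\}$, and the behavior of $(-)^{(1)}$ on the closed and active generators of $\bDelta^\op$ (as described in \Cref{action.of.one.d.stratum.functor}, in view of the factorization system of \Cref{fact.system.on.bDeltaop}) recovers the expected morphisms of pointed finite sets.

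Then, I would form the pullback $\cE := \BSV \times_{\Fin_*} \D_{|\Fin_*}$, a cocartesian fibration over $\D_{|\Fin_*}$ whose restriction along $\brax{-}$ is canonically $\BV$.  The adjunction $\brax{-}^* \dashv \brax{-}_*$ then turns the identity $\cE|_{\bDelta^\op} \simeq \BV$ into a canonical comparison map
\[ \cE \;\longra\; \int_{|\D_{|\Fin_*}} \BV \;=\; \brax{-}_*(\BV)|_{\D_{|\Fin_*}} \]
in $\Fun(\D_{|\Fin_*},\Cat) \simeq \coCart_{\D_{|\Fin_*}}$.  To check this is an equivalence, I would verify it fiberwise: for $R \in \D_{|\Fin_*}$, the $\infty$-operadic Segal condition on $\BSV$ identifies the left fiber with $\cV^{\times R^{(1)}}$, while \Cref{cat.fact.hlgy.over.disk.strat} together with the Segal condition on $\BV$ identifies the right fiber with the same product, and the comparison reduces to the identity under these identifications, as both arise from evaluation at each individual edge $\brax{1} \hookra R$ of $R$ in $\D$.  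Fiberwise equivalence then suffices, since the comparison already lies in $\coCart_{\D_{|\Fin_*}} \simeq \Fun(\D_{|\Fin_*},\Cat)$, where equivalences are detected on fibers.

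The main obstacle is the first step above: establishing that the composite $\bDelta^\op \to \Fin_*$ obtained by concatenating $\brax{-}$ with $(-)^{(1)}$ is indeed the cut functor defining $\BV$ as a pullback of $\BSV$.  Although intuitively clear — since $[n]^\circ$ is sent to the set of edges of $\brax{n}$ — this identification requires a careful case analysis on the generating morphisms of $\bDelta^\op$, tracking how closed and active morphisms translate into spans under $(-)^{(1)}$ and then into pointed maps via the inclusion $\Fin_* \hookra \Corr(\Fin)$.
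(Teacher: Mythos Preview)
Your overall strategy matches the paper's: build a comparison map and check it fiberwise using the Segal condition and \Cref{cat.fact.hlgy.over.disk.strat}. However, your construction of the comparison map contains a subtle gap that the paper handles explicitly.

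The issue is that $\cE := \BSV \times_{\Fin_*} \D_{|\Fin_*}$ lives over $\D_{|\Fin_*}$, not over $\D$, so the adjunction $\brax{-}^* \dashv \brax{-}_*$ (with $\brax{-} \colon \bDelta^\op \to \D$) does not directly produce a map $\cE \to \brax{-}_*(\BV)|_{\D_{|\Fin_*}}$. What the unit of an adjunction actually gives you is a map $\cE \to \varphi_* \BV$, where $\varphi \colon \bDelta^\op \to \D_{|\Fin_*}$ is the factorization of $\brax{-}$ through the subcategory. But $\varphi_* \BV$ is \emph{not} a priori the same as $\psi^* \brax{-}_* \BV =: \int_{|\D_{|\Fin_*}} \BV$ (with $\psi \colon \D_{|\Fin_*} \hookra \D$): at an object $R$, the former is a limit over the slice $(\bDelta^\op)_{R/^*}$ where the structure map $R \to \brax{n}$ is constrained to lie in $\D_{|\Fin_*}$, while the latter is a limit over the full slice $(\bDelta^\op)_{R/}$ in $\D$. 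Since $\psi$ is not fully faithful, these indexing categories genuinely differ.

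The paper deals with this by building a \emph{cospan}
\[
\psi^*\brax{-}_*\BV \;\longra\; \varphi_*\BV \;\longla\; (\BSV)_{|\D_{|\Fin_*}}
\]
and showing both arrows are equivalences fiberwise. The leftward arrow (your unit map) is handled essentially as you describe. The rightward arrow, however, requires a separate Segal-initiality result for the restricted slice $(\bDelta^\op)_{R/^*}$ --- this is \Cref{Enter.to.star.slice.is.Segal.initial}, a nontrivial variant of \Cref{prop.Enter.R.to.Delta.over.R.is.Segal.initial} that you do not invoke. Without it, you have only shown that $\cE$ and $\int_{|\D_{|\Fin_*}} \BV$ have abstractly equivalent fibers, not that the comparison map you wrote down is an equivalence.
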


\begin{proof}
Consider the commutative diagram
\begin{equation}
\label{diagram.containing.bDelta.op.mapping.to.D.star}
\begin{tikzcd}[row sep=1.5cm, column sep=1.5cm]
\bDelta^\op
\arrow[dashed]{rd}[pos=0.55]{\varphi}
\arrow[bend left=0]{rrd}{\Delta^1/\partial\Delta^1}
\arrow[bend right=0]{rdd}[swap]{\rho}
\\
&
\D_{|\Fin_*}
\arrow{r}
\arrow[hook, two heads]{d}{\psi}
&
\Fin_*
\arrow[hook, two heads]{d}
\\
&
\D
\arrow{r}[swap]{(-)^{(1)}}
&
\Corr(\Fin)
\end{tikzcd}
~,
\end{equation}
in which the factorization arises from the universal property of the pullback.  Recall that by definition of the underlying monoidal $\infty$-category of a symmetric monoidal $\infty$-category, we have a pullback square
\[ \begin{tikzcd}[row sep=1.5cm, column sep=1.5cm]
\BV
\arrow{r}
\arrow{d}
&
\BSV
\arrow{d}
\\
\bDelta^\op
\arrow{r}[swap]{\Delta^1/\partial\Delta^1}
&
\Fin_*
\end{tikzcd}
~.
\]
From diagram \Cref{diagram.containing.bDelta.op.mapping.to.D.star}, we therefore obtain a cospan
\begin{equation}
\label{cospan.from.int.over.D.star.of.BV.to.BSV.over.D.star}
\int_{|\D_{|\Fin_*}} \BV
:=
\psi^* \rho_* \BV
\longra
\varphi_* \BV
\simeq
\varphi_* \varphi^* \left( \BSV \right)_{|\D_{|\Fin_*}}
\longla
\left( \BSV \right)_{|\D_{|\Fin_*}}
\end{equation}
in $\coCart_{\D_{|\Fin_*}}$, where the rightwards morphism arises from the universal property of the right Kan extension $\varphi_*$ while the leftwards morphism is the unit of the adjunction $\varphi^* \adj \varphi_*$.

We claim that cospan \Cref{cospan.from.int.over.D.star.of.BV.to.BSV.over.D.star} consists of equivalences.  Since it lies in $\coCart_{\D_{|\Fin_*}}$, it suffices to check this on fibers over an arbitrary object $D \in \D_{|\Fin_*}$, where we obtain a cospan
\begin{equation}
\label{cospan.from.int.R.BV.to.Rone.copies.of.V}
\int_D \BV
\longra
(\varphi_* \BV)_{|D}
\longla
\cV^{\times D^{(1)}}
~.
\end{equation}
The fact that the rightwards functor in cospan \Cref{cospan.from.int.R.BV.to.Rone.copies.of.V} is an equivalence follows by combining~\Cref{t18}.  
Moreover, the equivalence between the outer two terms in cospan \Cref{cospan.from.int.R.BV.to.Rone.copies.of.V} of \Cref{t18} is clearly compatible with the cospan.\footnote{This is just the assertion that the functor $D^{(1)} \ra \Enter(D)$ induces an equivalence on limits with respect to the long composite in the commutative diagram
\[ \begin{tikzcd}[ampersand replacement=\&]
\&
(\bDelta^\op)_{D/^*}
\arrow[hook]{dd}
\arrow{rd}
\\
\Enter(D)
\arrow{ru}[sloped, pos=0.8]{~}
\arrow{rd}[sloped, swap, pos=0.8]{~}
\&
\&
\bDelta^\op
\arrow{r}{\BV}
\&
\Cat_1
\\
\&
(\bDelta^\op)_{D/}
\arrow{ru}
\end{tikzcd}~, \]
which is clear in light of the simplicity of the category $\Enter(D)$ and the functor $\bDelta^{\op,\cls}_{\leq 1} \hookra \bDelta^\op \xra{\BV} \Cat_1$ through which this composite factors.}  So cospan \Cref{cospan.from.int.R.BV.to.Rone.copies.of.V} consists of equivalences, which proves the claim.
\end{proof}

\begin{notation}
For any functor $\cK \ra \D_{|\Fin}$, we write
\[ \begin{tikzcd}[column sep=1.5cm, row sep=1.5cm]
\dint_{\! \! \! |\cK} \BV
\arrow{r}
\arrow{d}
\arrow[dashed, bend left]{rr}{\boxtimesfortikz}
&
\dint_{\! \! \! |\D_{|\Fin}} \BV
\arrow{r}{\boxtimesfortikz}
\arrow{d}
&
\cV
\\
\cK
\arrow{r}
&
\D_{|\Fin}
\end{tikzcd} \]
for the total tensor product functor that results from \Cref{tensor.everything.together.from.int.BV.over.D.Fin}.
\end{notation}

\begin{remark}
As we will show in \Cref{subsection.cart.symm.mon}, when $(\cV,\boxtimes) = (\cV,\times)$ is \textit{cartesian} symmetric monoidal, then there exists a \textit{total cartesian product} functor indexed over all of $\Corr(\Fin)$.\footnote{In fact, definitionally, this hypothesis can be weakened (see \Cref{rmk.bicomm.bialg.in.Cat}).}  This induces a total cartesian product functor over all of $\D$, which as we will show in \Cref{fact.hlgy.with.cartesian.enr} extends the functoriality of enriched factorization homology in this case.
\end{remark}

\subsection{Enriched factorization homology}
\label{subsection.enr.fact.hlgy}

\begin{observation}
\label{obs.D.of.M.factors.through.Fin}
For any stratified stratified space $M \in \M$, from 
Observations 
\Cref{obs.possible.maps.of.refinements} 
\and \Cref{action.of.one.d.stratum.functor} we see that the composite
\[
\D(M)
\longra
\totalD
\xlongra{s}
\D
\xra{(-)^{(1)}}
\Corr(\Fin)
\]
factors through the subcategory $\Fin \subset \Corr(\Fin)$; it follows that there exists a factorization
\[ \begin{tikzcd}
\D(M)
\arrow[dashed]{r}
\arrow{d}
&
\D_{|\Fin}
\arrow[hook, two heads]{d}
\\
\totalD
\arrow{r}[swap]{s}
&
\D
\end{tikzcd}~. \]
\end{observation}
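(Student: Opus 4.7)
The plan is to verify that the composite functor
\[
\D(M) \longra \totalD \xlongra{s} \D \xra{(-)^{(1)}} \Corr(\Fin)
\]
sends every morphism of $\D(M)$ to a span in $\Fin \subset \Corr(\Fin)$, i.e.\! to a span of finite sets whose backward leg is invertible. Once this is established, the claimed factorization through $\D_{|\Fin}$ is automatic from its pullback definition, together with the fact that $\Fin \subset \Corr(\Fin)$ is closed under composition.

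To carry out the verification I would first reduce to atomic cases using \Cref{obs.fact.syst.on.M}. A morphism $(R_0 \to M) \to (R_1 \to M)$ of $\D(M)$ lies over the refinement $R_0 \xlongra{\refi} M$, which is in particular active, so by uniqueness of the closed--active factorization system the closed part of the underlying morphism $R_0 \to R_1$ is forced to be an identity. Thus $R_0 \to R_1$ is itself active and further factors as a creation followed by a refinement, with the intermediate object again a refinement of $M$ (a composite of refinements being a refinement) and so still in $\D(M)$. By \Cref{obs.possible.maps.of.refinements}, the creation piece must moreover satisfy condition $(*)$: the restriction $(R_1)_{|R_0^{(1)}} \to R_0^{(1)}$ of the underlying proper constructible bundle is an isomorphism of stratified spaces.

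It then suffices to check, using the explicit case analysis of \Cref{action.of.one.d.stratum.functor}, that each atomic morphism above has image in $\Fin$. A refinement produces a span whose backward leg is already an isomorphism and whose forward leg is a surjection of finite sets, so it lies in $\Fin$. For a creation morphism $R_0 \to R_1$ with contravariant bundle $\varphi \colon R_1 \to R_0$, the associated span is
\[
R_0^{(1)} \xlongla{\varphi} R_1^{(1)}|_{R_0^{(1)}} \longhookra R_1^{(1)}~,
\]
and condition $(*)$ is exactly the assertion that the backward leg $\varphi \colon R_1^{(1)}|_{R_0^{(1)}} \to R_0^{(1)}$ is a bijection of finite sets. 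Both atomic cases therefore represent forward maps in $\Fin$, and closure of $\Fin \subset \Corr(\Fin)$ under composition yields the full claim.

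No serious obstacle is anticipated: the statement amounts to careful bookkeeping combining the two cited observations. The one substantive point is recognizing that condition $(*)$ is precisely what converts the ``partially-defined multi-valued function'' that a general creation morphism in $\D$ produces into an honest forward map of finite sets, by forcing the backward leg of the associated span to be invertible.
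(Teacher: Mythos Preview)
Your proposal is correct and follows exactly the approach indicated in the paper: the observation there is stated without further proof, simply pointing to \Cref{obs.possible.maps.of.refinements} and \Cref{action.of.one.d.stratum.functor}, and you have carefully unpacked precisely this argument. The only content you add beyond the paper is the explicit reduction to the atomic creation and refinement cases via the closed--active factorization system and the non-unique creation--refinement factorization of active morphisms, together with the verification that the intermediate object remains in $\D(M)$; this is all straightforward and matches what the paper leaves implicit.
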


\begin{definition}
\label{define.enr.fact.hlgy}
Let $M \in \M$ be a stratified stratified space, let $\cV$ be a symmetric monoidal $\infty$-category, and let $\cC \in \fCat_1(\cV)$ be a flagged $\cV$-enriched $(\infty,1)$-category.  Then, we define the (\bit{enriched}) \bit{factorization homology} of $\cC$ over $M$ to be the colimit
\[
\int_M \cC
:=
\colim
\left(
\int_{|\D(M)} \cdiC
\xra{\int_{|\D(M)} \ulhom_\cC}
\int_{|\D(M)} \BV
\xlongra{\bigboxtimes}
\cV
\right)
~.
\]
This assembles into a functor
\begin{equation}
\label{fact.hlgy.over.M.as.a.functor}
\fCat_1(\cV)
\xra{\int_M(-)}
\cV
~.
\end{equation}
\end{definition}

\begin{remark}
\label{rmk.only.cyclically.monoidal}
Depending on the combinatorics of $\D(M)$, it can be possible to weaken the hypothesis that $\cV$ be symmetric monoidal while still obtaining a total tensor product functor
\[
\int_{|\D(M)} \BV
\xlongra{\bigboxtimes}
\cV
~.
\]
For instance, if $M = \SS^1$ then it suffices (definitionally) for $\cV$ to be merely \textit{paracyclically} monoidal: to have a well-defined tensor product of any finite set of objects indexed by a configuration of points in $\SS^1$.  (This notion sits between $\EE_1$ and $\EE_2$.)
\end{remark}

\begin{remark}
There is a sense in which factorization homology is given by a colimit over factorization homology.  Namely, by \Cref{t18}, for any simplicial object $\cY \in \Fun(\bDelta^\op,\Cat_1)$ and any $M \in \M$, there is a canonical functor
\[
\int_{|\D(M)} \cY
\longra
\int_M \cY
\]
which is a localization with respect to the cocartesian morphisms over $\D(M)$.  By inspection, we see that for any $\cC \in \fCat_1(\cV)$ there exists an extension
\[ \begin{tikzcd}[column sep=2cm, row sep=1.5cm]
\dint_{\! \! \! |\D(M)} \cdiC
\arrow{r}{\int_{|\D(M)} \ulhom_\cC}
\arrow{d}
&
\dint_{\! \! \! |\D(M)}
\BV
\arrow{r}{\boxtimesfortikz}
&
\cV
\\
\dint_{\! \! \! M} \cdiC
\arrow[dashed, bend right=10]{rru}
\end{tikzcd} \]
along this localization, whose colimit therefore also computes the enriched factorization homology
\[
\int_M \cC
\]
since localizations are final.\footnote{On the other hand, there does not exist an extension
\[ \begin{tikzcd}[ampersand replacement=\&, column sep=2cm, row sep=1.5cm]
\dint_{\! \! \! |\D(M)} \cdiC
\arrow{r}{\int_{|\D(M)} \ulhom_\cC}
\arrow{d}
\&
\dint_{\! \! \! |\D(M)}
\BV
\arrow{d}
\\
\dint_{\! \! \! M} \cdiC
\arrow[dotted]{r}[description]{\not\exists}
\&
\dint_{\! \! \! M} \BV
\end{tikzcd} \]
before postcomposition with the total tensor product functor $\int_{|\D(M)} \BV \xra{\boxtimes} \cV$.  In general, cartesian factorization homology is only functorial for \textit{strict} (as opposed to right-lax) functors of category-objects, and in this case it is also easy to see directly that such an extension does not exist (except in the trivial case that $M \in \M$ is a finite disjoint union of copies of the 0-disk $\DD^0$).}
\end{remark}

It is easy to construct an action of the automorphism ($\infty$-)group
\[
\Aut(M)
:=
\Aut_\M(M)
\]
of the object $M \in \M$ on the factorization homology $\int_M \cC$.

\begin{observation}
The pullback
\begin{equation}
\label{disk.stratns.mod.aut}
\begin{tikzcd}
\D(M)_{\htpy \Aut(M)}
\arrow[hook]{r}
\arrow{d}
&
\totalD
\arrow{d}{t}
\\
\sB \Aut(M)
\arrow[hook]{r}
&
\M
\end{tikzcd}
\end{equation}
is a cocartesian fibration: disk-refinements are functorial for automorphisms.
\end{observation}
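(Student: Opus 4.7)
My plan is to show that the inclusion $\sB\Aut(M) \hookrightarrow \M$ factors through the subcategory $\M^\refi$ of refinement morphisms, and then to invoke the refinement case of the discussion in \Cref{functoriality.for.D.of}. For the factorization, note that $\sB\Aut(M)$ is by definition the full subgroupoid of $\M$ on the single object $M$, so all of its morphisms are equivalences in $\M$. Unwinding \Cref{remark.closedcreation.and.ref.are.just.Strat}, every equivalence $\phi : M \xra{\sim} M$ in $\M$ is represented by a vari-framed isomorphism of stratified 1-manifolds, which manifestly satisfies the conditions of \Cref{define.refinement.morphisms}: it is a homeomorphism on underlying topological spaces, restricts to an embedding on each stratum, and respects framings on 1-dimensional strata.

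Having established this factorization, I would next globalize the pointwise statement from \Cref{functoriality.for.D.of}. There it is noted that for any refinement morphism $M_0 \xra{\refi} M_1$, the pullback $(\totalD)_{|(M_0 \longra M_1)} \longra [1]$ is a cocartesian fibration whose pushforward is given by postcomposition. Since postcomposition is manifestly compatible with composition in $\M^\refi$, this assembles into a cocartesian fibration
\[
(\totalD)_{|\M^\refi} \longra \M^\refi.
\]
Base change preserves cocartesian fibrations, so pulling back along $\sB\Aut(M) \hookrightarrow \M^\refi$ yields the desired cocartesian fibration, since the pullback \Cref{disk.stratns.mod.aut} is canonically identified with this base change.

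Concretely, the cocartesian pushforward under $\phi \in \Aut(M)$ sends a disk-refinement $(R \xra{r} M)$ to $(R \xra{\phi \circ r} M)$, with connecting morphism in $\totalD$ given by the identity on $R$ on the top row and by $\phi$ on the bottom row. The only subtle step is the identification of equivalences in $\M$ with refinement morphisms; once this is granted, both the local-to-global assembly and the base-change step are standard bookkeeping and present no real obstacle.
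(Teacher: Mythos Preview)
Your argument is correct. The paper states this as an Observation with no proof beyond the phrase ``disk-refinements are functorial for automorphisms,'' so there is little to compare against directly.

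That said, your route through $\M^\refi$ is slightly more elaborate than necessary. Since $\sB\Aut(M)$ is an $\infty$-groupoid, a functor to it is a cocartesian fibration precisely when equivalences lift to equivalences; your explicit lift $(\id_R,\phi)$ witnesses this directly, without needing to assemble a global cocartesian fibration over all of $\M^\refi$ first. An alternative more in the spirit of the paper's surrounding material is to observe that automorphisms, being opposite to stratified isomorphisms, lie in $\M^{\sf cov}$, and then invoke \Cref{pullback.of.disk.stratns.along.stratified.covering.spaces}---this is exactly how the paper handles the parallel cases of $\BT$ and $\BW$ in \Cref{obs.para.cyclic.and.epicyclic.project.as.cocart.fibns}. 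One small quibble: your appeal to \Cref{remark.closedcreation.and.ref.are.just.Strat} for the claim that equivalences in $\M$ arise from stratified isomorphisms is a bit loose, since that remark does not say this explicitly; but the claim is true and standard.
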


\begin{remark}
In the case that $M = \SS^1$ is the smooth circle, the pullback \Cref{disk.stratns.mod.aut} recovers the functor
\[ \begin{tikzcd}
\cyclic
\arrow{d}
\\
\BT
\end{tikzcd} \]
appearing in~\Cref{t10}, the canonical functor from $\cyclic$ to its groupoid completion (namely the classifying space of the circle group $\TT$). 
\end{remark}

\begin{observation}
\label{obs.D.of.M.quotiented.by.Aut.M.factors.through.Fin}
Extending \Cref{obs.D.of.M.factors.through.Fin}, for any stratified stratified space $M \in \M$, there exists a factorization
\[ \begin{tikzcd}
\D(M)_{\htpy \Aut(M)}
\arrow[dashed]{r}
\arrow{d}
&
\D_{|\Fin}
\arrow[hook, two heads]{d}
\\
\totalD
\arrow{r}[swap]{s}
&
\D
\end{tikzcd}~. \]
\end{observation}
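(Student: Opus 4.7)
The plan is to reduce to \Cref{obs.D.of.M.factors.through.Fin} by separately handling the fiberwise morphisms and the cocartesian lifts in the cocartesian fibration $\D(M)_{\htpy\Aut(M)} \to \sB\Aut(M)$. This projection is cocartesian by \Cref{pullback.of.disk.stratns.along.stratified.covering.spaces}, since every automorphism of $M$ lies in the subcategory $\M^{\sf cov}$. Because the base $\sB\Aut(M)$ has a single object, every morphism in $\D(M)_{\htpy\Aut(M)}$ factors as a morphism in the fiber $\D(M)$ followed by a cocartesian lift of some $\alpha \in \Aut(M)$; the former case is handled directly by \Cref{obs.D.of.M.factors.through.Fin}, so only the cocartesian lifts of automorphisms remain to be treated.

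For these, the cocartesian pushforward along $\alpha$ is given by pullback of disk-refinements along $\alpha$ (again by \Cref{pullback.of.disk.stratns.along.stratified.covering.spaces}); since $\alpha$ is invertible in $\M$, this pullback is an autoequivalence of $\D(M)$, so any cocartesian lift of $\alpha$ is an equivalence in $\D(M)_{\htpy\Aut(M)}$. Its image under $s: \totalD \to \D$ is therefore an equivalence in $\D$, which the $1$-stratum functor $(-)^{(1)}$ of \Cref{notation.one.d.strata.defines.corr} sends to an equivalence in $\Corr(\Fin)$. The step requiring the most care is the concluding check that every equivalence in $\Corr(\Fin)$ lies in the subcategory $\Fin$: such an equivalence must be represented by a span both of whose legs are bijections (as can be read off from the pullback definition of composition in $\Corr(\Fin)$), and any such span is canonically equivalent to the image under $\Fin \hookrightarrow \Corr(\Fin)$ of the bijection so determined.
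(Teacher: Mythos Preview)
Your argument is correct. The paper itself gives no proof of this observation, treating it as an immediate extension of \Cref{obs.D.of.M.factors.through.Fin}; your proposal spells out exactly the natural way to make that extension precise, by exploiting that $\sB\Aut(M)$ is an $\infty$-groupoid so that cocartesian lifts are automatically equivalences, hence land in $\Fin$ after applying $(-)^{(1)}$.
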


\begin{construction}
\label{construct.aut.action.on.enr.fact.hlgy}
The left Kan extension
\begin{equation}
\label{LKE.gives.AutM.action}
\begin{tikzcd}[column sep=3cm, row sep=1.5cm]
\dint_{\! \! \! |\D(M)_{\htpy \Aut(M)}} \cdiC
\arrow{r}{\int_{|\D(M)_{\htpy \Aut(M)}} \ulhom_\cC}
\arrow{d}
&
\dint_{\! \! \! |\D(M)_{\htpy \Aut(M)}} \BV
\arrow{ld}
\arrow{r}{\boxtimesfortikz}
&
\cV
\\
\D(M)_{\htpy \Aut(M)}
\arrow{d}
\\
\sB\Aut(M)
\arrow[dashed, bend right=10]{rruu}[swap]{\Aut(M) \lacts \int_M \cC}
\end{tikzcd}
\end{equation}
can be computed as a fiberwise colimit since it is along a cocartesian fibration.  As the fibers of the vertical composite are given by
\[ \begin{tikzcd}[row sep=1.5cm]
\dint_{\! \! \! |\D(M)} \cdiC
\arrow{r}
\arrow{d}
&
\dint_{\! \! \! |\D(M)_{\htpy \Aut(M)}} \cdiC
\arrow{d}
\\
\D(M)
\arrow{r}
\arrow{d}
&
\D(M)_{\htpy \Aut(M)}
\arrow{d}
\\
\{ M \}
\arrow{r}
&
\sB\Aut(M)
\end{tikzcd}~, \]
we see that the underlying object in the left Kan extension \Cref{LKE.gives.AutM.action} is indeed $\int_M \cC$, as indicated.  This therefore constructs a canonical action of $\Aut(M)$ on $\int_M \cC$, and thereafter a lift
\[ \begin{tikzcd}[row sep=1.5cm]
\fCat_1(\cV)
\arrow[dashed]{r}
\arrow{rd}[swap]{\int_M(-)}
&
\Fun(\sB\Aut(M),\cV)
\arrow{d}{\fgt}
\\
&
\cV
\end{tikzcd} \]
of the factorization homology over $M$ functor \Cref{fact.hlgy.over.M.as.a.functor}.
\end{construction}

\begin{definition}
For any flagged $\cV$-enriched $(\infty,1)$-category $\cC$, we define its (\bit{$\cV$-enriched}) \bit{topological Hochschild homology} to be its enriched factorization homology over the smooth circle $\SS^1 \in \M$; we denote this by
\[
\THH_\cV(\cC)
:=
\left( \int_{\SS^1} \cC \right)
\in \cV
~,
\]
and the construction assembles into a functor
\[
\fCat_1(\cV)
\xra{\THH_\cV}
\cV
~.
\]
\end{definition}

\begin{remark}
As a particular case of \Cref{construct.aut.action.on.enr.fact.hlgy}, for any $\cV$-enriched $(\infty,1)$-category $\cC$ we obtain a natural action
\[
\TT
\lacts
\THH_\cV(\cC)
\]
of the circle group $\TT$ on its $\cV$-enriched topological Hochschild homology.  In the special case that $(\cV,\boxtimes) = (\cV,\times)$ is \textit{cartesian} symmetric monoidal, in~\cite{circle} we enhance this $\TT$-action to an \textit{unstable cyclotomic structure} on $\THH_\cV(\cC)$.
\end{remark}

\begin{observation}
\label{obs.fact.hlgy.over.D.zero}
There is a canonical identification
\[
\int_{\DD^0} \cC
\simeq
\iC
\tensoring
\uno_\cV
\]
of the enriched factorization homology of $\cC$ over $\DD^0$ with the tensoring of the unit object $\uno_\cV \in \cV$ over the underlying $\infty$-groupoid of $\cC$.
\end{observation}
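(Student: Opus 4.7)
The plan is to unwind the definition of $\int_{\DD^0} \cC$ by tracing through each of the three ingredients that go into \Cref{define.enr.fact.hlgy}, using the fact that $\DD^0$ is already disk-stratified and has no 1-dimensional stratum.

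First I would identify the indexing category of the defining colimit. By \Cref{D.of.M.is.a.one.cat}, we have $\D(\DD^0) \simeq \pt$, and the unique object is the identity refinement $\DD^0 \xra{\refi} \DD^0$. Consequently, $\int_{|\D(\DD^0)} \cdiC$ and $\int_{|\D(\DD^0)} \BV$ are each equivalent to the fibers of the respective cocartesian fibrations over $\DD^0 \in \D$. By \Cref{cat.fact.hlgy.over.disk.strat}, since $\DD^0$ has exactly one vertex and zero edges, these fibers compute as the limit over $\Enter(\DD^0)$ of copies of $(\cdiC)_{|[0]^\circ}$ and $\BV_{|[0]^\circ}$ respectively (no copies of the ``edge'' fibers enter, since $\DD^0^{(1)} = \es$). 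We have $(\cdiC)_{|[0]^\circ} \simeq \iC$ by \Cref{define.infinite.join}, and $\BV_{|[0]^\circ} \simeq \pt$ since $\BV \da \bDelta^\op$ is a nonsymmetric $\infty$-operad. Thus
\[
\int_{|\D(\DD^0)} \cdiC \;\simeq\; \iC
\qquad\text{and}\qquad
\int_{|\D(\DD^0)} \BV \;\simeq\; \pt~.
\]

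Next I would identify the two maps in the defining composite. The map $\int_{|\D(\DD^0)} \ulhom_\cC$ is the unique map $\iC \ra \pt$, since its target is terminal. Composing with $\bigboxtimes$, the ``tensor everything together'' functor restricts on this fiber to the map $\pt \ra \cV$ selecting the empty monoidal product, which is $\uno_\cV$; concretely, this is the image in $\cV$ of the unique object of $\BSV_{|(\pt \ra \pt)_+}$ (i.e.\! of $\BSV$ lying over $\pt_+ \in \Fin_*$), under the ``tensor everything together'' functor \Cref{tensor.everything.together.over.Fin}. (The pullback square of \Cref{tensor.everything.together.from.int.BV.over.D.Fin} makes this identification precise, after observing via \Cref{action.of.one.d.stratum.functor} that $\DD^0$ maps to the empty set in $\Corr(\Fin)$.)

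Combining these two steps, the composite functor of \Cref{define.enr.fact.hlgy} is identified with the constant functor $\iC \ra \cV$ at $\uno_\cV$. Its colimit is by definition the tensoring $\iC \tensoring \uno_\cV$, which gives the claimed identification. The only substantive step is the fiberwise computation of the categorified factorization homology and the compatibility with $\bigboxtimes$; both follow directly from results already in hand (\Cref{cat.fact.hlgy.over.disk.strat,tensor.everything.together.from.int.BV.over.D.Fin}), so no genuine obstacle arises.
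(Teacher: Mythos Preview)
Your argument is correct and is exactly the unpacking the paper leaves implicit: the statement is recorded as an \emph{Observation} with no accompanying proof, so there is nothing to compare against beyond the definition itself. Your identification of $\D(\DD^0)\simeq\pt$, the fibers $\iC$ and $\pt$ via \Cref{cat.fact.hlgy.over.disk.strat}, and the value $\uno_\cV$ of the empty monoidal product are precisely the ingredients that make the claim immediate.
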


As in \cite[Proposition 4.1.8]{circle}, enriched factorization homology satisfies \bit{excision}.
\begin{prop}[Excision]
\label{excision}
Let $\cC \in \fCat_1(\cV)$ be a flagged $\cV$-enriched $(\infty,1)$-category $\cC$.
Let $M \in \M$, and let $S \subset M^{(1)}$ be a finite subset of the 1-dimensional strata of $M$ such that each circle-component of $M$ contains an element in $S$.
Via \cite[Construction 3.7.1]{circle}, $S$ determines a functor
\[
\bDelta^{\op}
\xra{~D_\bullet~}
\D(M)
\]
whose colimit in $\M$ exists and is $M$.  
Furthermore, the canonical morphism in $\cV$
\[
\underset{[p]^\circ  \in \bDelta^{\op}}\colim
\Bigl(
\int_{D_p} \cC
\Bigr)
~=~
\Bigl|
\int_{D_\bullet} \cC
\Bigr|
\xra{~\simeq~}
\int_M \cC
\]
is an equivalence.

\end{prop}

\begin{example}
\Cref{excision} is a \bit{local-to-global formula} for enriched factorization homology.  
It gives a method to compute the value $\int_M \cC$ of enriched factorization homology on a general object $M\in \M$ in terms of simpler values.
Here are some such examples.
\begin{enumerate}

\item
In the case that $S \subset \SS^1$ is a subset of cardinality 2, 
\Cref{excision} implies the canonical morphism induced by the composition rule of $\cC$,
\[
\circ
\colon
\hom_\cC(\bullet,\bullet')
\underset{ (\bullet^\circ , \bullet') \in \cC^{\op} \times \cC} \bigotimes
\hom_\cC(\bullet',\bullet)
\xra{~\simeq~}
\THH_\cV(\cC)
~,
\]
is an equivalence,
which witnesses the $\cV$-enriched topological Hochschild homology of $\cC$ as the coend of the identity $(\cC,\cC)$-bimodule $\cC$ with itself.

\item
In the case that $S \subset \SS^1$ is a singleton subset, 
\Cref{excision} implies the canonical morphism induced by the composition rule of $\cC$:
\[
\circ
\colon
\Bigl|
{\sf Rep}_\cC( \chi_\bullet)
\Bigr|
\xra{~\simeq~}
\THH_\cV(\cC)
\]
is an equivalence.  
Here, the cosimplicial quiver
$
\chi_\bullet \colon 
\bDelta 
\to 
\Quiv
$ 
evaluates on $[p]$ as the cyclically-directed quiver whose (cyclically-directed) set of vertices is $\{0,1,\dots,p\}$.
\end{enumerate}

\end{example}

\section{Cartesian enriched factorization homology}
\label{section.cartesian.enr.fact.hlgy}

In this section, we study enriched factorization homology as defined in \Cref{section.enriched.fact.hlgy} in the special case that our enriching symmetric monoidal $\infty$-category $(\cV,\boxtimes) = (\cV,\times)$ is cartesian.  In this case, enriched factorization homology becomes functorial over all of $\M$, as we show in \Cref{fact.hlgy.with.cartesian.enr}.  In order to prove this, we extend the cartesian symmetric monoidal deloop $\BSVx \da \Fin_*$ to a larger object $\wBSVx \da \Corr(\Fin)$ in \Cref{subsection.cart.symm.mon}.

\subsection{Preliminaries on cartesian symmetric monoidal $\infty$-categories}
\label{subsection.cart.symm.mon}

The following result codifies the additional structure present on \textit{cartesian} symmetric monoidal $\infty$-categories.

\begin{prop}
\label{all.about.extended.cart.s.m.deloop}
Let $\cV$ be an $\infty$-category admitting finite products.

\begin{enumerate}

\item\label{cart.s.m.deloop.exists}
There exists a canonical cocartesian fibration
\[ \begin{tikzcd}
\wBSVx
\arrow{d}
\\
\Corr(\Fin)
\end{tikzcd} \]
satisfying the following properties.
\begin{enumeratesub}
\item\label{pb.of.extended.cart.s.m.deloop.is.cart.s.m.deloop}
Its pullback along the surjective monomorphism $\Fin_* \rsurjmono \Corr(\Fin)$ is the symmetric monoidal deloop
\[ \begin{tikzcd}
\BSVx
\arrow{r}
\arrow{d}
&
\wBSVx
\arrow{d}
\\
\Fin_*
\arrow[hook, two heads]{r}
&
\Corr(\Fin)
\end{tikzcd} \]
of the cartesian symmetric monoidal $\infty$-category $(\cV,\times)$; in particular, its fiber over an object $S \in \Corr(\Fin)$ is canonically identified with the indexed product $\cV^{\times S} \simeq \Fun(S,\cV)$.
\item\label{cocart.mdrmy.of.extended.cart.s.m.deloop}
Its cocartesian monodromy along a morphism \Cref{typical.morphism.in.Corr.Fin} in $\Corr(\Fin)$ is the composite
\begin{equation}
\label{eqn.cocart.mdrmy.of.extended.cart.s.m.deloop}
\begin{tikzcd}[row sep=0cm]
\cV^{\times S}
\arrow{r}
&
\cV^{\times U}
\arrow{r}
&
\cV^{\times T}
\\
\rotatebox{90}{$\in$}
&
\rotatebox{90}{$\in$}
&
\rotatebox{90}{$\in$}
\\
\left(V_s\right)_{s \in S}
\arrow[maps to]{r}
&
\left(V_{\varphi(u)}\right)_{u \in U}
\arrow[maps to]{r}
&
\left(
\prod_{u \in \psi^{-1}(t)}
V_{\varphi(u)}
\right)_{t \in T}
\end{tikzcd}
\end{equation}
of pullback along the map $S \xla{\varphi} U$ followed by the indexed product along the map $U \xra{\psi} T$.
\end{enumeratesub}

\item\label{product.everything.together.functor.on.extended.cart.s.m.deloop}
The total cartesian product functor \Cref{tensor.everything.together.over.Fin} for $(\cV,\times)$ admits a canonical extension
\[ \begin{tikzcd}[column sep=1.5cm]
\BSV_{|\Fin}
\arrow[hook, two heads]{r}
\arrow[bend left]{rrr}{\prod}
&
\BSV
\arrow[hook, two heads]{r}
&
\wBSVx
\arrow[dashed]{r}{\prod}
&
\cV
\end{tikzcd}~. \]

\item\label{extended.cart.s.m.deloop.computes.catfied.fact.hlgy}
There is a canonical pullback square
\[ \begin{tikzcd}
\dint_{\! \! \! |\D} \BV
\arrow{r}
\arrow{d}
&
\wBSV
\arrow{d}
\\
\D
\arrow{r}[swap]{(-)^{(1)}}
&
\Corr(\Fin)
\end{tikzcd}~. \]

\end{enumerate}
\end{prop}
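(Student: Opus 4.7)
The overall plan is to address parts~(1), (2), (3) in sequence; the bulk of the conceptual work is in (1), while (2) and (3) reduce to coherence checks and a case analysis.

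For part (1), I would construct $\wBSVx$ as the cocartesian unstraightening of a functor $\Corr(\Fin) \to \Cat$ sending $S \mapsto \cV^{\times S}$. Using standard machinery for building correspondence-valued functors (for example, Gaitsgory--Rozenblyum's formalism, or Barwick's unfurling), such a functor is produced from the pair consisting of (i) the covariant functor $\Fin \to \Cat$ sending a map $S \to T$ to the indexed product $\cV^{\times S} \to \cV^{\times T}$, and (ii) the contravariant functor $\Fin^\op \to \Cat$ sending the same map to pullback $\cV^{\times T} \to \cV^{\times S}$, subject to a Beck--Chevalley condition on pullback squares in $\Fin$. For a pullback square $P = S \times_T T'$, this condition amounts to the tautological identification $\prod_{(f')^{-1}(t')} V_{g'(p)} \simeq \prod_{f^{-1}(g(t'))} V_s$, which holds because $(f')^{-1}(t')$ is canonically in bijection with $f^{-1}(g(t'))$. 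Property~(1a) is then obtained by tracing through the surjective monomorphism $\Fin_* \hookrightarrow \Corr(\Fin)$: a morphism $S_+ \to T_+$ becomes the span $S \leftarrow \varphi^{-1}(T) \to T$ with the right map an injection, and the monodromy prescribed by~(i) and~(ii) recovers the defining structure maps of $\BSVx$. Property~(1b) holds by construction.

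For part (2), the ``total product'' functor $\prod : \wBSVx \to \cV$ corresponds, by straightening, to a natural transformation from the $\Corr(\Fin)$-straightening of $\wBSVx$ to the constant functor at $\cV$, given fiberwise by $(V_s)_s \mapsto \prod_s V_s$. The naturality square for a span $S \xleftarrow{\varphi} U \xrightarrow{\psi} T$ asks that the composite $\cV^{\times S} \to \cV^{\times T} \xrightarrow{\prod} \cV$ (via the cocartesian monodromy) agree with $\prod : \cV^{\times S} \to \cV$; this reduces to the canonical identification $\prod_{t \in T} \prod_{u \in \psi^{-1}(t)} V_{\varphi(u)} \simeq \prod_{u \in U} V_{\varphi(u)}$ together with the projection $\prod_{s \in S} V_s \to \prod_{u \in U} V_{\varphi(u)}$ induced by $\varphi$. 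Higher coherences follow from the symmetric monoidality of cartesian products. Unstraightening this natural transformation yields the desired $\prod$, and its restriction to $\BSV_{|\Fin}$ agrees with the usual total product by construction.

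For part (3), I would identify the two cocartesian fibrations over $\D$ with the unstraightening of a common functor $\D \to \Cat$. By definition, the left-hand side classifies the right Kan extension $\brax{-}_* \BV$; by the pointwise formula together with \Cref{prop.Enter.R.to.Delta.over.R.is.Segal.initial}, its value at $R \in \D$ is $\cV^{\times R^{(1)}}$, matching the fiber of the pullback of $\wBSV$. For functoriality, I would verify morphism-by-morphism against the factorization system on $\D$ (closed, refinement, creation, per \Cref{obs.fact.syst.on.M}), using the explicit description of $(-)^{(1)}$ in \Cref{action.of.one.d.stratum.functor}: closed morphisms give backwards monomorphisms (yielding restriction as the monodromy), refinements give forwards surjections (yielding indexed products along the surjection), and creations give more general spans (yielding pullback followed by insertion of terminal objects in factors with empty preimage). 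Each case is a direct unwinding of the explicit formula for $(\brax{-}_* \BV)(R \to R')$ suggested by \Cref{cat.fact.hlgy.over.disk.strat}, matched against the monodromy formula~(1b) for $\wBSV$.

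The principal technical obstacle is part~(1): producing $\wBSVx$ rigorously as a $\Corr(\Fin)$-indexed unstraightening carries substantial coherence bookkeeping, whether by invoking a general existence theorem for correspondence-valued $\infty$-functors or via an explicit construction (for instance, as a complete Segal object). Once~(1) is settled, parts~(2) and~(3) become mostly formal, though the case analysis in~(3) must be carried out carefully to verify matching on each morphism class in $\D$.
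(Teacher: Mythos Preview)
Your overall architecture is reasonable, but there is a genuine gap in part~(2), and part~(1) takes a different route from the paper that is worth flagging.

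For part~(1), the paper does \emph{not} invoke a Beck--Chevalley/unfurling existence theorem. Instead it gives an explicit construction: it introduces the universal exponentiable fibration $\ol{\Corr}(\Fin) \to \Corr(\Fin)$ whose fibers are the finite sets themselves, and then sets
\[
\wBSVx := \Fun^\rel_{/\Corr(\Fin)^\op}\bigl(\ol{\Corr}(\Fin)^\op,\ul{\cV}\bigr)^\cartdual,
\]
the \emph{cartesian dual} of the relative functor category. The locally cocartesian monodromy is then read off as ``right Kan extend, then restrict'' along the fibers of $\ol{\Corr}(\Fin)$, which computes exactly the pullback-then-product formula; one then checks these compose. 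Your approach via Barwick/Gaitsgory--Rozenblyum is in principle workable, but you should be aware that it is not what the paper does, and that those formalisms have their own hypotheses to verify.

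The real issue is part~(2). You assert that the total product corresponds under straightening to a \emph{natural transformation} to the constant functor at $\cV$, and that the naturality square for a span $S \xleftarrow{\varphi} U \xrightarrow{\psi} T$ ``asks that the composite $\cV^{\times S} \to \cV^{\times T} \xrightarrow{\prod} \cV$ agree with $\prod : \cV^{\times S} \to \cV$.'' This is false whenever $\varphi$ is not surjective: the comparison is the \emph{projection} $\prod_{s\in S} V_s \to \prod_{u\in U} V_{\varphi(u)}$, which is not an equivalence. What you actually need is an \emph{oplax} natural transformation, and ``higher coherences follow from the symmetric monoidality of cartesian products'' does not supply that. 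The paper handles this by building $\prod$ directly as a map of complete Segal spaces: to an $[n]$-point of $\wBSVx$ (i.e.\ an exponentiable fibration $\cE \to [n]$ plus a functor out of the pullback along $\TwAr([n])$ satisfying a right Kan extension condition) it assigns the right Kan extension down to $[n]$, and then checks compatibility with simplicial maps using that right Kan extensions compose. This is precisely what packages the non-invertible projections coherently.

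For part~(3), your plan to match fibers and monodromies class-by-class is the right intuition, but you never construct a comparison map between the two cocartesian fibrations; matching fibers and monodromies alone does not pin down the higher coherences. The paper produces the comparison for free as the unit $\wBSV_{|\D} \to \brax{-}_*\brax{-}^*(\wBSV_{|\D}) \simeq \int_{|\D}\BV$ in $\coCart_\D$, and then checks it is an equivalence fiberwise using that the representable $\fC(R)$ is left-Kan-extended from $\bDelta^\op_{\leq 1}$.
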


\begin{remark}
\label{rmk.bicomm.bialg.in.Cat}
For a general symmetric monoidal $\infty$-category $(\cV,\boxtimes)$, one might define an extension to a \textit{bicommutative bialgebra} structure on $\cV$ to be the data of a cocartesian fibration
\[
\wBSVbox \da \Corr(\Fin)
\]
which extends its symmetric monoidal deloop
\[ \BSVbox \da \Fin_* \]
as in \Cref{all.about.extended.cart.s.m.deloop} (but perhaps allowing for different and more exotic cocartesian monodromy than indexed monoidal products as in the composite \Cref{eqn.cocart.mdrmy.of.extended.cart.s.m.deloop}).  An extended total tensor product functor might then be furnished by an extension
\[ \begin{tikzcd}
\Corr(\Fin)
\arrow{r}{\wBSVbox}
\arrow[hook, two heads]{d}
&
\Cat_1
\\
\ul{\Corr(\Fin)}^{2\textup{-}\op}
\arrow[dashed]{ru}
\end{tikzcd} \]
of its unstraightening -- which might be called an \textit{augmentation} --, where $\ul{\Corr(\Fin)}$ denotes the evident enhancement of $\Corr(\Fin)$ to a $2$-category with
\[
\ulhom_{\ul{\Corr(\Fin)}}(S,T)
\simeq
\Fin_{/( S \times T) }
~:
\]
given a morphism \Cref{typical.morphism.in.Corr.Fin} in $\Corr(\Fin)$, this provides a natural transformation in the diagram
\[ \begin{tikzcd}
\cV^{\times S}
\arrow{r}[swap, transform canvas={xshift=0.9cm, yshift=-0.3cm}]{\Rightarrow}
\arrow{rd}
&
\cV^{\times U}
\arrow{r}
&
\cV^{\times T}
\arrow{ld}
\\
&
\cV
\end{tikzcd} \]
whose components are ``diagonal'' maps in $\cV$.  We expect this to suffice to extend the functoriality of $\cV$-enriched factorization homology, as we achieve in \Cref{fact.hlgy.with.cartesian.enr} via \Cref{all.about.extended.cart.s.m.deloop} in the cartesian symmetric monoidal case.
\end{remark}

The remainder of this subsection is devoted to the proof of \Cref{all.about.extended.cart.s.m.deloop}.

\begin{definition}
\label{define.cart.dual}
We define the \bit{cartesian dual} of an exponentiable fibration
\[ \left( \cF \da \cB^\op \right) \in \EFib_{\cB^\op} \]
to be the object 
\[ \left( \cF^\cartdual \da \cB \right) \in \Cat_{/\cB} \]
satisfying the universal property that for any test object $(\cK \da \cB) \in \Cat_{/\cB}$, the space of lifts
\[ \begin{tikzcd}
&
\cF^\cartdual
\arrow{d}
\\
\cK
\arrow{r}
\arrow[dashed]{ru}
&
\cB
\end{tikzcd} \]
is equivalent to the space of lifts
\[ \begin{tikzcd}
\TwAr(\cK)^\op
\arrow[dashed]{r}
\arrow{d}[swap]{t^\op}
&
\cF
\arrow{d}
\\
\cK^\op
\arrow{r}
&
\cB^\op
\end{tikzcd} \]
which take all cartesian morphisms for the right fibration $t^\op$ to cartesian morphisms in $\cF$ over $\cB^\op$.
\end{definition}

\begin{remark}
In the case that
\[ (\cF \da \cB^\op) \in \Cart_{\cB^\op} \subset \EFib_{\cB^\op} \]
is a \textit{cartesian} fibration, by \cite{BGN-dual} its cartesian dual in the sense of \Cref{define.cart.dual} is indeed its cartesian dual in the sense of item \Cref{fibrationconventions} of \Cref{subsection.notation.and.conventions}.
\end{remark}

\begin{observation}
\label{obs.rke.condition}
Let $(\cE^\op \da \cB^\op) \in \EFib_{\cB^\op}$ be an exponentiable fibration, and let $\cZ \in \Cat_1$ be an arbitrary $\infty$-category.  For any test object $(\cK \da \cB) \in \Cat_{/\cB}$, we will describe an equivalent datum to a lift
\begin{equation}
\label{lift.to.cart.dual.of.rel.fun}
\begin{tikzcd}
&
\Fun^\rel_{/\cB^\op}(\cE^\op,\ul{\cZ})^\cartdual
\arrow{d}
\\
\cK
\arrow{r}
\arrow[dashed]{ru}
&
\cB
\end{tikzcd}
\end{equation}
to the cartesian dual of the relative functor $\infty$-category.  For any object $k \in \cK$, consider the diagram
\[ \begin{tikzcd}[row sep=1.5cm, column sep=1.5cm]
\left( \{ k \}
\underset{\cB}{\times}
\cE
\right)^\op
\arrow{r}
\arrow{d}
&
\left( \cK_{k/}
\underset{\cB}{\times}
\cE
\right)^\op
\arrow{r}
\arrow{d}
&
\left(
\TwAr(\cK)
\underset{\cB}{\times}
\cE
\right)^\op
\arrow{r}
\arrow{d}
&
\left(
\cK
\underset{\cB}{\times}
\cE
\right)^\op
\arrow{r}
\arrow{d}
&
\cE^\op
\arrow{d}
\\
\{ (k \ra k)^\circ \}
\arrow{r}
&
\left( \cK_{k/} \right)^\op
\arrow{r}
\arrow{d}
&
\TwAr(\cK)^\op
\arrow{r}[swap]{t^\op}
\arrow{d}{s^\op}
&
\cK^\op
\arrow{r}
&
\cB^\op
\\
&
\{ k \}
\arrow{r}
&
\cK
\end{tikzcd} \]
in which all squares are pullbacks.  Then, unwinding the definitions, we see that a lift \Cref{lift.to.cart.dual.of.rel.fun} is equivalently given by a functor
\[
\left(
\TwAr(\cK)
\underset{\cB}{\times}
\cE
\right)^\op
\longra
\cZ
\]
such that for each $k \in \cK$, the resulting commutative triangle
\[ \begin{tikzcd}[column sep=1.5cm]
\left( \{ k \}
\underset{\cB}{\times}
\cE
\right)^\op
\arrow{r}
\arrow{d}
&
\cZ
\\
\left( \cK_{k/}
\underset{\cB}{\times}
\cE
\right)^\op
\arrow{ru}
\end{tikzcd} \]
is a right Kan extension.  We will refer to this as the \textit{right Kan extension condition} in the proof of \Cref{all.about.extended.cart.s.m.deloop}\Cref{product.everything.together.functor.on.extended.cart.s.m.deloop}.
\end{observation}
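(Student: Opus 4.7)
The plan is to derive the observation by three successive applications of universal properties, followed by an aggregation step.

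First, I would apply \Cref{define.cart.dual} with $\cF = \Fun^\rel_{/\cB^\op}(\cE^\op,\ul{\cZ})$: a lift as in \Cref{lift.to.cart.dual.of.rel.fun} corresponds by definition to a functor $\tilde G : \TwAr(\cK)^\op \to \cF$ over $\cB^\op$ (through $t^\op$) sending $t^\op$-cartesian morphisms to cartesian morphisms in $\cF$ over $\cB^\op$. Since $t^\op$ has already been noted to be a right fibration, every morphism of $\TwAr(\cK)^\op$ is $t^\op$-cartesian, so this condition reduces to $\tilde G$ sending \emph{every} morphism of $\TwAr(\cK)^\op$ to a cartesian morphism in $\cF$ over $\cB^\op$.

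Second, the defining universal property of the relative functor $\infty$-category -- available because $\cE^\op \to \cB^\op$ is exponentiable -- supplies a natural bijection between functors $\tilde G$ as above (forgetting the cartesian condition) and functors $G : (\TwAr(\cK) \times_\cB \cE)^\op \to \cZ$.

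Third (the substantive step), I would translate the cartesian-preservation condition on $\tilde G$ into the right Kan extension condition on $G$. The key intermediary fact is a pointwise characterization of cartesian morphisms in the relative functor $\infty$-category: a morphism $F_1 \to F_0$ in $\cF$ covering $\beta : b_1 \to b_0$ in $\cB^\op$ is cartesian iff its adjunct $\Delta^1 \times_{\cB^\op} \cE^\op \to \cZ$ is a right Kan extension of its restriction to the fiber $\{1\} \times_{\cB^\op} \cE^\op \simeq (\cE^\op)_{b_1}$. Granted this, for each fixed $k \in \cK$ the object $(k \ra k)^\circ$ is terminal in its $s^\op$-fiber $(\cK_{k/})^\op$ (because $(k \ra k)$ is initial in $\cK_{k/}$), so every object of $(\cK_{k/})^\op$ admits a canonical morphism to $(k \ra k)^\circ$. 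Applying the cartesian condition along each such morphism, the pointwise criterion for right Kan extensions aggregates the resulting cartesian conditions into the single assertion that the restriction of $G$ to $(\cK_{k/} \times_\cB \cE)^\op$ is a right Kan extension of its restriction to $(\{k\} \times_\cB \cE)^\op$ -- precisely the condition depicted above. Varying $k \in \cK$ and invoking closure of cartesian morphisms under composition and left-cancellation then recovers the full cartesian-preservation condition for all morphisms of $\TwAr(\cK)^\op$.

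The hard part will be the cartesian-morphism characterization invoked in Step 3: for a cartesian fibration $\cE^\op \to \cB^\op$ this is standard, but for a general exponentiable fibration it must be derived directly from the universal property of $\Fun^\rel_{/\cB^\op}(\cE^\op,\ul{\cZ})$ and the cylinder structure of $\Delta^1 \times_{\cB^\op} \cE^\op$. A secondary concern is verifying that restricting to morphisms into the $s^\op$-fiber terminal objects suffices to detect all cartesian-preservation; this should follow from an appropriate factorization of a general morphism in $\TwAr(\cK)^\op$ through the relevant terminal object of an $s^\op$-fiber.
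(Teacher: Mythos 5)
There is a genuine error at the first step of your reduction. You assert that, because $t^\op \colon \TwAr(\cK)^\op \to \cK^\op$ is a right fibration, every morphism of $\TwAr(\cK)^\op$ is $t^\op$-cartesian, so that the lift must carry \emph{every} morphism to a cartesian morphism of $\Fun^\rel_{/\cB^\op}(\cE^\op,\ul{\cZ})$ over $\cB^\op$. But $t^\op$ is not a right fibration: its fiber over $k^\circ$ is the coslice $(\cK_{k/})^\op$, which is not a groupoid. (The wording of \Cref{define.cart.dual} is admittedly loose, but \Cref{ex.walking.arrow.point.of.cart.dual.of.rel.fun} is explicit that for $\cK=[1]$ the leftwards morphism $01 \to 00$ is the \emph{unique} nonidentity cartesian morphism of $\TwAr([1])^\op$.) The cartesian morphisms for $t^\op$ are precisely those inverted by $s^\op$, i.e.\ those lying in the $s^\op$-fibers $(\cK_{k/})^\op$, and only these are constrained. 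Your strengthened condition is genuinely different from the asserted one: for $\cK=[1]$ it would also force the image of $01 \to 11$ -- which covers an identity in $\cB^\op$ -- to be cartesian over an identity, hence an equivalence, collapsing the free datum $j^*i_*F \to G$ of \Cref{ex.walking.arrow.point.of.cart.dual.of.rel.fun} to an equivalence. Concretely, taking $\cE = \cB = \cK = [1]$, the correct answer is an arbitrary morphism of $\cZ$, whereas your condition would single out only the equivalences.

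Once the cartesian class is corrected, the rest of your outline does fill in what the Observation leaves to ``unwinding the definitions'': the exponentiability adjunction converts lifts $\TwAr(\cK)^\op \to \Fun^\rel_{/\cB^\op}(\cE^\op,\ul{\cZ})$ over $\cB^\op$ into functors $\left(\TwAr(\cK)\times_{\cB}\cE\right)^\op \to \cZ$; the substantive input is the pointwise criterion that a morphism of $\Fun^\rel_{/\cB^\op}(\cE^\op,\ul{\cZ})$ covering $\beta$ is cartesian iff the corresponding functor is right Kan extended from the fiber over the \emph{target} of $\beta$ (you wrote the source fiber -- compare \Cref{ex.walking.arrow.point.of.cart.dual.of.rel.fun}, where the extension is from $(\cE^\op)_{|0^\circ}$, the target of $1^\circ \to 0^\circ$); and your aggregation over the morphisms of $(\cK_{k/})^\op$ into the terminal object $(k\to k)^\circ$, using composition and left-cancellation of cartesian morphisms, does correctly reduce the fiberwise cartesian-preservation condition to a single right Kan extension condition for each $k$. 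But as written, the statement your argument proves is not the one in the Observation.
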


\begin{example}
\label{ex.walking.arrow.point.of.cart.dual.of.rel.fun}
Let us study the special case of \Cref{obs.rke.condition} when $\cK = [1]$.  It suffices to study the universal case where the map $\cK \xra{\sim} \cB$ is an equivalence; however, for clarity regarding the general case (where $\cB$ is arbitrary), we write $\cE_{|[1]} := \cE$ where appropriate.

First of all, note the identification
\[
\TwAr([1])^\op
\simeq
\left(
\begin{tikzcd}
&
01
\arrow{ld}
\arrow{rd}
\\
00
&
&
11
\end{tikzcd}
\right)
~,
\]
under which the right fibration $\TwAr([1])^\op \xra{t^\op} [1]^\op$ is given by collapsing the rightwards morphism; the leftwards morphism is its unique nonidentity cartesian morphism.  Then, an arbitrary lift
\begin{equation}
\label{lift.TwAr.of.walking.arrow.to.rel.fun}
\begin{tikzcd}
\TwAr([1])^\op
\arrow[dashed]{r}
\arrow{d}[swap]{t^\op}
&
\Fun^\rel_{/[1]^\op}(\cE^\op,\ul{\cZ})
\arrow{d}
\\
{[1]^\op}
\arrow{r}[swap]{\sim}
&
{[1]^\op}
\end{tikzcd}
\end{equation}
is equivalent data to an arbitrary functor
\[
\left( \cE^\op \right)_{|[1]^\op}
\coprod_{( \cE^\op)_{|1^\circ} }
\left(
( \cE^\op)_{|1^\circ}
\times
[1]
\right)
\longra
\cZ
~.
\]
Moreover, the lift \Cref{lift.TwAr.of.walking.arrow.to.rel.fun} takes the unique nonidentity cartesian morphism in $\TwAr([1])^\op$ to a cartesian morphism in $\Fun^\rel_{/[1]^\op}(\cE^\op,\ul{\cZ})$ precisely if in the commutative diagram
\[ \begin{tikzcd}
(\cE^\op)_{|0^\circ}
\arrow[hook]{d}
\arrow[bend left=20]{rrd}
\\
(\cE^\op)_{|[1]^\op}
\arrow[hook]{r}
\arrow[hookleftarrow]{d}
&
(\cE^\op)_{|[1]^\op}
\coprod_{(\cE^\op)_{|1^\circ}}
\left(
(\cE^\op)_{|1^\circ}
\times
[1]
\right)
\arrow{r}
&
\cZ
\\
(\cE^\op)_{|1^\circ}
\arrow[bend right=20]{rru}
\end{tikzcd} \]
the upper triangle is a right Kan extension.  From here, it follows that a lift \Cref{lift.TwAr.of.walking.arrow.to.rel.fun} which preserves cartesian morphisms, i.e., the datum of a lift
\[ \begin{tikzcd}
&
\Fun^\rel_{/[1]^\op}(\cE^\op,\ul{\cZ})^\cartdual
\arrow{d}
\\
{[1]}
\arrow{r}[swap]{\sim}
\arrow[dashed]{ru}
&
{[1]}
\end{tikzcd}~, \]
can be equivalently specified by a diagram
\[ \begin{tikzcd}[row sep=1.25cm, column sep=2cm]
(\cE^\op)_{|0^\circ}
\arrow{rd}[sloped, pos=0.45]{F}
\arrow[hook]{d}[swap]{i}
\\
(\cE^\op)_{|[1]^\op}
\arrow[hookleftarrow]{d}[swap]{j}
\arrow{r}{i_*F}
&
\cZ
\\
(\cE^\op)_{|1^\circ}
\arrow{ru}[sloped, pos=0.65]{j^*i_*F}[swap, sloped, transform canvas={xshift=-0.05cm, yshift=-0.15cm}]{\rotatebox{-90}{$\Rightarrow$}}
\arrow[bend right]{ru}[swap, sloped, pos=0.4]{G}
\end{tikzcd} \]
in which the upper and middle triangles commute and the upper triangle is a right Kan extension.\footnote{As the functor $i$ is fully faithful, if the upper triangle is a right Kan extension then it necessarily commutes.}
\end{example}

\begin{example}
Building on \Cref{ex.walking.arrow.point.of.cart.dual.of.rel.fun}, we immediately find that for an arbitrary exponentiable fibration $(\cE^\op \da \cB^\op) \in \EFib_{\cB^\op}$ and any $\cZ \in \Cat_1$, the datum of a lift
\[ \begin{tikzcd}
&
\Fun^\rel_{/\cB^\op}(\cE^\op,\ul{\cZ})^\cartdual
\arrow{d}
\\
{[n]}
\arrow{r}
\arrow[dashed]{ru}
&
\cB
\end{tikzcd} \]
is equivalent to the data of a list
\[
\left(
F_0
~,~
(j_{0,1})^*(i_{0,1})_* F_0
\longra
F_1
~,~
(j_{1,2})^*(i_{1,2})_* F_1
\longra
F_2
~,~
\ldots
~,~
(j_{n-1,n})^*(i_{n-1,n})_* F_{n-1}
\longra
F_n
\right)
~,
\]
where for each $k \in [n]^\op$ we have chosen an arbitrary functor
\[
(\cE^\op)_{|k^\circ}
\xra{F_k}
\cZ
\]
and for $1 \leq k \leq n$ we write
\[ \begin{tikzcd}[column sep=1.2cm]
(\cE^\op)_{|(k-1)^\circ}
\arrow[hook]{r}{i_{k-1,k}}
&
(\cE^\op)_{|\{k-1 < k \}^\op}
\arrow[hookleftarrow]{r}{j_{k-1,k}}
&
(\cE^\op)_{|k^\circ}
\end{tikzcd} \]
for the inclusions of the fibers.
\end{example}

\begin{notation}
We write
\[ \begin{tikzcd}
\ol{\Corr}(\Fin)
\arrow{d}
\\
\Corr(\Fin)
\end{tikzcd} \]
for the universal exponentiable fibration whose sections are all finite sets.\footnote{That is, $\Corr(\Fin)$ classifies such exponentiable fibrations, with the equivalence given by pulling back $\ol{\Corr}(\Fin)$.}  Explicitly, the fiber over an object $S \in \Corr(\Fin)$ is the set $S$, and a morphism in $\ol{\Corr}(\Fin)$ from $(S,s)$ to $(T,t)$ is given by a span
\[ \begin{tikzcd}
&
(U,u)
\arrow{ld}[swap]{\varphi}
\arrow{rd}{\psi}
\\
(S,s)
&
&
(T,t)
\end{tikzcd} \]
in $\Fin_*$.
\end{notation}

\begin{definition}
Let $\cV$ be an $\infty$-category admitting finite products.  Its \bit{extended cartesian symmetric monoidal deloop} is the $\infty$-category
\[ \begin{tikzcd}
\wBSVx
:=
\hspace{-1cm}
&
\Fun^\rel_{/\Corr(\Fin)^\op}
\left( \ol{\Corr}(\Fin)^\op , \ul{\cV} \right)^\cartdual
\arrow{d}
\\
&
\Corr(\Fin)
\end{tikzcd} \]
over $\Corr(\Fin)$.
\end{definition}

We now prove \Cref{all.about.extended.cart.s.m.deloop}; for clarity, we separate the proofs of its three parts.

\begin{proof}[Proof of \Cref{all.about.extended.cart.s.m.deloop}\Cref{cart.s.m.deloop.exists}]
Consider the morphism \Cref{typical.morphism.in.Corr.Fin} in $\Corr(\Fin)$; this induces a pullback diagram
\[ \begin{tikzcd}
S \underset{U}{\join} T
\arrow{r}
\arrow{d}
&
\ol{\Corr}(\Fin)
\arrow{d}
\\
{[1]}
\arrow{r}
&
\Corr(\Fin)
\end{tikzcd}~. \]
In view of \Cref{ex.walking.arrow.point.of.cart.dual.of.rel.fun}, we see that the functor
\begin{equation}
\label{projection.from.wBSVx.to.Corr.Fin.in.proof.of.part.one}
\begin{tikzcd}
\wBSVx
\arrow{d}
\\
\Corr(\Fin)
\end{tikzcd}
\end{equation}
is a locally cocartesian fibration: its fiber over an object $S \in \Corr(\Fin)$ is given by the functor $\infty$-category
\[
\Fun(S^\op,\cV)
\simeq
\Fun(S,\cV)
~,
\]
and its cocartesian monodromy over the morphism \Cref{typical.morphism.in.Corr.Fin} is given by right Kan extension followed by restriction in the diagram
\[ \begin{tikzcd}[column sep=1.5cm]
S^\op
\arrow{rd}
\arrow[hook]{d}
\\
\left( S \underset{U}{\join} T \right)^\op
\arrow{r}
\arrow[hookleftarrow]{d}
&
\cV
\\
T^\op
\arrow{ru}
\end{tikzcd}~. \]
For each element $t \in T^\op$ we can identify
\[
S^\op
\underset{\left( S \underset{U}{\join} T \right)^\op}{\times}
\left( \left( S \underset{U}{\join} T \right)^\op \right)_{t/}
\simeq
\left(
S
\underset{\left( S \underset{U}{\join} T \right)}{\times}
\left( S \underset{U}{\join} T \right)_{/t}
\right)^\op
\simeq
\left( \psi^{-1}(t) \right)^\op
~,
\]
so that this cocartesian pushforward takes a functor
\[
S^\op
\xra{(V_s)_{s \in S^\op}}
\cV
\]
to the functor
\[ \begin{tikzcd}[row sep=0cm]
T^\op
\arrow{r}
&
\cV
\\
\rotatebox{90}{$\in$}
&
\rotatebox{90}{$\in$}
\\
t
\arrow[maps to]{r}
&
\left(
\prod_{u \in \psi^{-1}(t)}
V_{\varphi(u)}
\right)
\end{tikzcd}~. \]
These cocartesian monodromy functors evidently compose, so that the functor \Cref{projection.from.wBSVx.to.Corr.Fin.in.proof.of.part.one} is indeed a cocartesian fibration.  Finally, we identify the pullback of the extended cartesian symmetric monoidal deloop $\wBSVx$ along $\Fin_* \rsurjmono \Corr(\Fin)$ as the cartesian symmetric monoidal deloop $\BSVx$ by \cite[Corollary 2.4.1.8]{LurieHA}.
\end{proof}

\begin{proof}[Proof of \Cref{all.about.extended.cart.s.m.deloop}\Cref{product.everything.together.functor.on.extended.cart.s.m.deloop}]
We construct the functor
\[
\wBSVx
\xlongra{\prod}
\cV
\]
as a natural transformation of complete Segal spaces.  By definition, a functor $[n] \ra \wBSVx$ is the data of an exponentiable fibration $\cE \da [n]$ along with a functor
\begin{equation}
\label{functor.from.iterated.pullback.in.proof.of.product.everything.together.functor.for.extended.cart.s.m.deloop}
\begin{tikzcd}
\left( \TwAr([n]) \underset{[n]}{\times} \cE \right)^\op
\arrow{r}
\arrow{d}
\arrow[dashed, bend left]{rrr}
&
\cE^\op
\arrow{r}
\arrow{d}
&
\ol{\Corr}(\Fin)^\op
\arrow{d}
&
\cV
\\
\TwAr([n])^\op
\arrow{r}[swap]{t^\op}
&
{[n]^\op}
\arrow{r}
&
\Corr(\Fin)^\op
\end{tikzcd}
\end{equation}
from the interated pullback that satisfies the right Kan extension condition of \Cref{obs.rke.condition}.  To this $[n]$-point of $\wBSVx$, we assign the $[n]$-point of $\cV$ given by the right Kan extension
\begin{equation}
\label{rkan.for.n.point.of.V}
\begin{tikzcd}[column sep=2cm]
\left( \TwAr([n]) \underset{[n]}{\times} \cE \right)^\op
\arrow{r}
\arrow{d}
&
\cV
\\
\TwAr([n])^\op
\arrow{d}[swap]{s^\op}
\\
{[n]}
\arrow[dashed]{ruu}
\end{tikzcd}~.
\end{equation}

Observe that the vertical composite in diagram \Cref{rkan.for.n.point.of.V} is the opposite of the vertical composite in the diagram
\[ \begin{tikzcd}
\TwAr([n]) \underset{[n]}{\times} \cE
\arrow{r}
\arrow{d}
&
\TwAr([n])
\arrow{d}{(s,t)}
\\
{[n]^\op} \times \cE
\arrow{r}
\arrow{d}
&
{[n]^\op \times [n]}
\\
{[n]^\op}
\end{tikzcd}~, \]
in which the square is a pullback and the lower vertical functor is the projection.  This is the composite of two cocartesian fibrations, so it is again a cocartesian fibration.  Thus, the vertical composite in diagram \Cref{rkan.for.n.point.of.V} is a cartesian fibration.  It follows that that right Kan extension is given by fiberwise limit: it takes the object $i \in [n]$ to the limit
\[
\lim
\left(
\left(
[n]_{i/}
\underset{[n]}{\times}
\cE
\right)^\op
\longra
\left(
\TwAr([n])
\underset{[n]}{\times}
\cE
\right)^\op
\longra
\cV
\right)
~.
\]
But further, as by assumption our functor \Cref{functor.from.iterated.pullback.in.proof.of.product.everything.together.functor.for.extended.cart.s.m.deloop} satisfies the right Kan extension condition, since right Kan extensions compose it follows that the canonical morphism
\begin{equation}
\label{reduce.lim.over.slide.to.lim.over.fiber.in.prod.functor}
\lim
\left(
\left(
[n]_{|i}
\underset{[n]}{\times}
\cE
\right)^\op
\longra
\left(
\TwAr([n])
\underset{[n]}{\times}
\cE
\right)^\op
\longra
\cV
\right)
\xlongra{\sim}
\lim
\left(
\left(
[n]_{i/}
\underset{[n]}{\times}
\cE
\right)^\op
\longra
\left(
\TwAr([n])
\underset{[n]}{\times}
\cE
\right)^\op
\longra
\cV
\right)
\end{equation}
is an equivalence.  Through this reduction, setting $n=0$ we see that this right Kan extension does indeed act on the fiber over an object $S \in \Corr(\Fin)$ as the functor
\[
\left( \wBSVx \right)_{|S}
\simeq
\Fun(S^\op,\cV)
\xra{\prod}
\cV
~,
\]
namely the limit over $S^\op$.

We now show that this assignment respects the simplicial structure maps of the complete Segal spaces of $\wBSVx$ and $\cV$.  For this, suppose we are given a composite
\[
[m]
\xlongra{\rho}
[n]
\longra
\wBSVx
~.
\]
From the universal property of right Kan extension, taking right Kan extensions as in diagram \Cref{rkan.for.n.point.of.V} yields a diagram
\[ \begin{tikzcd}[row sep=0.25cm]
{[m]}
\arrow{rd}[swap, sloped, transform canvas={xshift=0cm, yshift=-0.2cm}]{\rotatebox{90}{$\Rightarrow$}}
\arrow{dd}[swap]{\rho}
\\
&
\cV
\\
{[n]}
\arrow{ru}
\end{tikzcd}~, \]
which it remains to show strictly commutes.  This amounts to showing, for each $i \in [m]$, that its component
\[
\lim
\left(
\left(
[m]_{i/}
\underset{[m]}{\times}
\cE
\right)^\op
\longra
\left(
\TwAr([m])
\underset{[m]}{\times}
\cE
\right)^\op
\longra
\cV
\right)
\longra
\lim
\left(
\left(
[n]_{\rho(i)/}
\underset{[n]}{\times}
\cE
\right)^\op
\longra
\left(
\TwAr([n])
\underset{[n]}{\times}
\cE
\right)^\op
\longra
\cV
\right)
\]
is an equivalence.  But through the equivalence \Cref{reduce.lim.over.slide.to.lim.over.fiber.in.prod.functor} this is identified with the morphism
\[
\lim
\left(
\left(
[m]_{|i}
\underset{[m]}{\times}
\cE
\right)^\op
\longra
\left(
\TwAr([m])
\underset{[m]}{\times}
\cE
\right)^\op
\longra
\cV
\right)
\longra
\lim
\left(
\left(
[n]_{|\rho(i)}
\underset{[n]}{\times}
\cE
\right)^\op
\longra
\left(
\TwAr([n])
\underset{[n]}{\times}
\cE
\right)^\op
\longra
\cV
\right)
~,
\]
which is tautologically an equivalence: these are the limits of identical diagrams.  Thus, we have indeed constructed a functor
\[
\wBSVx
\xlongra{\prod}
\cV
~.
\]
That this restricts to $\BSVx$ appropriately follows from \cite[Proposition 2.4.1.6]{LurieHA}.
\end{proof}

\begin{proof}[Proof of \Cref{all.about.extended.cart.s.m.deloop}\Cref{extended.cart.s.m.deloop.computes.catfied.fact.hlgy}]
Observe that we have a commutative diagram
\[ \begin{tikzcd}
&
\BSV
\arrow{rr}
\arrow{dd}
&
&
\wBSV
\arrow{dd}
\\
\BV
\arrow{ru}
\arrow{dd}
\\
&
\Fin_*
\arrow{rr}
&
&
\Corr(\Fin)
\\
\bDelta^\op
\arrow{ru}[sloped, pos=1]{\Delta^1/\partial \Delta^1}
\arrow[hook]{rr}
&
&
\D
\arrow{ru}[swap, sloped, pos=0.2]{(-)^{(1)}}
\end{tikzcd} \]
in which both upper squares are pullbacks -- the left square by definition of the underlying nonsymmetric $\infty$-operad of an $\infty$-operad, and the right square by part \Cref{cart.s.m.deloop.exists}\Cref{pb.of.extended.cart.s.m.deloop.is.cart.s.m.deloop}.  The unit of the adjunction
\[ \begin{tikzcd}[column sep=1.5cm]
\coCart_\D
\simeq
\Fun(\D,\Cat_1)
\arrow[transform canvas={yshift=0.9ex}]{r}{\rho^*}
\arrow[hookleftarrow, transform canvas={yshift=-0.9ex}]{r}[yshift=-0.2ex]{\bot}[swap]{\rho_*}
&
\Fun(\bDelta^\op,\Cat_1)
\simeq
\coCart_{\bDelta^\op}
\end{tikzcd} \]
furnishes a morphism
\[
\wBSV_{|\D}
\longra
\rho_* \left( \rho^* \left( \wBSV_{|\D} \right) \right)
\simeq
\rho_* \BV
=:
\int_{|\D} \BV
\]
in $\coCart_\D$, which we will show is an equivalence.  It suffices to check this on fibers over an object $D \in \D$, i.e., that the functor
\begin{equation}
\label{must.show.equivce.from.wBSV.to.int.BV.over.R}
\cV^{\times D^{(1)}}
\simeq
\wBSV_{|D}
\longra
\int_D \BV
\end{equation}
is an equivalence.

There is an evident enrichment of $\coCart_{\bDelta^\op}$ over $\Cat_1$, determined by the formula
\[
\hom_\Cat
\left(
[n]
,
\ulhom_{\coCart_{\bDelta^\op}} ( \cY,\cY' )
\right)
\simeq
\hom_{\coCart_{\bDelta^\op}} ( [n] \times \cY,\cY' )
\]
(where we write $[n] \in \coCart_\bDelta^\op \simeq \Fun(\bDelta^\op,\Cat)$ for the constant diagram at the object $[n] \in \Cat$).  Using this enrichment, and writing
\[
\fC(D)
:=
\hom_\D(D,\rho{\bullet})
\in
\Fun(\bDelta^\op,\Cat)
\simeq
\coCart_{\bDelta^\op}
\]
for the value of the restricted (covariant) Yoneda embedding at the object $D \in \D$, the end formula for right Kan extensions yields an equivalence
\begin{equation}
\label{use.end.formula.for.rkan.to.identify.int.R.BV}
\int_D \BV
\simeq
\ulhom_{\coCart_{\bDelta^\op}} ( \fC(D) , \BV )
~.
\end{equation}

Note that the object $\fC(D) \in \coCart_{\bDelta^\op}$ is evidently free: it's in the image of the left Kan extension
\begin{equation}
\label{lkan.adjn.from.delta.op.leq.one.to.delta.op}
\begin{tikzcd}[column sep=1.5cm]
\coCart_{\bDelta_{\leq 1}^\op}
\arrow[dashed, hook, transform canvas={yshift=0.9ex}]{r}
\arrow[leftarrow, transform canvas={yshift=-0.9ex}]{r}[yshift=-0.2ex]{\bot}
&
\coCart_{\bDelta^\op}
\end{tikzcd}
\end{equation}
along the full inclusion $\bDelta^\op_{\leq 1} \hookra \bDelta^\op$.  Hence, we obtain a composite equivalence
\begin{align}
\nonumber
\int_D
\BV
& \underset{\Cref{use.end.formula.for.rkan.to.identify.int.R.BV}}{\simeq}
\ulhom_{\coCart_{\bDelta^\op}}(\fC(D),\BV)
\\
\label{use.lkan.adjn.from.delta.op.leq.one.to.delta.op}
& \simeq
\ulhom_{\coCart_{\bDelta^\op_{\leq 1}}} \left( \fC(D)_{|\bDelta^\op_{\leq 1}} , \BV_{|\bDelta^\op_{\leq 1}} \right)
\\
\label{reduce.to.just.one.since.BV.is.pt.at.zero}
& \simeq
\Fun \left( \fC(D)_{|[1]^\circ} , \BV_{|[1]^\circ} \right)
\\
\nonumber
& \simeq
\cV^{\times D^{(1)}}
~,
\end{align}
where equivalence \Cref{use.lkan.adjn.from.delta.op.leq.one.to.delta.op} follows from the (evidently $\Cat$-enriched) adjunction \Cref{lkan.adjn.from.delta.op.leq.one.to.delta.op} and equivalence \Cref{reduce.to.just.one.since.BV.is.pt.at.zero} follows from the fact that $\BV_{|[0]^\circ} \simeq \pt$.  By construction, it is now clear that the functor \Cref{must.show.equivce.from.wBSV.to.int.BV.over.R} is indeed an equivalence: the identity functor on the $\infty$-category $\cV^{\times D^{(1)}}$.
\end{proof}

\subsection{Cartesian enriched factorization homology}
\label{fact.hlgy.with.cartesian.enr}

\begin{construction}
\label{product.everything.over.int.BV.over.D}
Let $\cV$ be a cartesian symmetric monoidal $\infty$-category.  By \Cref{all.about.extended.cart.s.m.deloop}, there exists a canonical (\textit{extended}) \textit{total cartesian product} composite
\[ \begin{tikzcd}[column sep=1.5cm]
\dint_{\! \! \! |\D} \BV
\arrow{r}
\arrow[bend left, dashed]{rr}{\prod}
&
\wBSV
\arrow{r}{\prod}
&
\cV
\end{tikzcd}~. \]
\end{construction}

\begin{notation}
We write
\[
\D/\M
:=
\lim \left(
\begin{tikzcd}
&
\Ar(\M)
\arrow{d}{t}
\\
\D
\arrow[hook]{r}
&
\M
\end{tikzcd}
\right) \]
for the pullback.  This comes equipped with a functor
\[
\D/\M
\xlongra{s}
\D
\]
through which we obtain the pulled back cocartesian fibration
\[ \begin{tikzcd}
\dint_{\! \! \! |\D/\M} \cdiC
\arrow{r}
\arrow{d}
&
\dint_{\! \! \! |\D} \cdiC
\arrow{d}
\\
\D/\M
\arrow{r}[swap]{s}
&
\D
\end{tikzcd}~, \]
as well as a cocartesian fibration
\[ \begin{tikzcd}
\D / \M
\arrow{d}{t}
\\
\M
\end{tikzcd} \]
whose cocartesian monodromy functors are given by postcomposition.
\end{notation}

\begin{definition}
\label{definition.fully.functorial.enr.fact.hlgy}
Suppose that $(\cV,\times)$ is a cartesian symmetric monoidal $\infty$-category, and let $\cC \in \fCat_1(\cV)$ be a flagged $\cV$-enriched $(\infty,1)$-category.  Then, the (\bit{cartesian enriched}) \bit{factorization homology functor} of $\cC$ is the left Kan extension
\[ \begin{tikzcd}[column sep=2cm, row sep=1.5cm]
\dint_{\! \! \! |\D/\M} \cdiC
\arrow{r}{\int_{|\D/\M} \ulhom_\cC}
\arrow{d}
&
\dint_{\! \! \! |\D/\M} \BVx
\arrow{r}{\prod}
&
\cV
\\
\D/\M
\arrow{d}[swap]{t}
\\
\M
\arrow[dashed]{rruu}[swap]{\int_{(-)} \cC}
\end{tikzcd}~, \]
a fiberwise colimit since it is along a cocartesian fibration; assembling over all $\cC \in \fCat_1(\cV)$ determines a bifunctor
\[
\int_{(-)} (-)
:
\M \times \fCat_1(\cV)
\longra
\cV
~.
\]
\end{definition}

The notation $\int_{(-)} \cC$ of \Cref{definition.fully.functorial.enr.fact.hlgy} is justified by the following result.

\begin{remark}
\label{remark.cartesian.enr.fact.hlgy.recovers.AFR}
Under certain mild hypotheses on the cartesian symmetric monoidal $\infty$-category $(\cV,\times)$ (which in particular induce a canonical embedding $\Spaces \hookra \cV$), cartesian enriched factorization homology of $\cV$-enriched $(\infty,1)$-categories (\Cref{definition.fully.functorial.enr.fact.hlgy}) is equivalent to cartesian factorization homology of category-objects in $\cV$ (\Cref{definition.cartesian.fact.hlgy}).  The proof of this assertion amounts to an elaboration of the result that flagged $\Spaces$-enriched $(\infty,1)$-categories are equivalent to Segal spaces \cite[Theorem 4.4.7]{GH-enr}, or really its generalization \cite[Theorem 7.5]{Haug-rect}. 
\end{remark}

\begin{remark}
\label{remark.aug.cycl.cocycl.bialg.in.Cat}
As indicated in \Cref{rmk.bicomm.bialg.in.Cat}, to obtain the functoriality of enriched factorization homology over all of $\M$ it should suffice to assume that $\cV$ is an augmented bicommutative bialgebra object in $\Cat$.  Moreover, echoing \Cref{rmk.only.cyclically.monoidal}, if we are only interested in extending the definition of enriched factorization homology over some subcategory of $\M$, it can be possible to weaken this hypothesis further; for instance, if we are only interested extending the definition of enriched factorization homology over the full subcategory $\BW \subset \M$ on the smooth circle, then it should suffice to assume that $\cV$ is an augmented cyclically monoidal and cyclically comonoidal bialgebra object in $\Cat$.\footnote{To make this assertion true, it suffices to declare that it is true by definition.}
\end{remark}

\appendix

\section{Factorization systems}
\label{section.fact.syst}

The purpose of this section is to prove the following result.

\begin{prop}\label{freely.add.cocart.for.second.factor}
Let $\cB$ be an $\infty$-category equipped with a factorization system $[\cB_0;\cB_1]$.  There exists a left adjoint
\[ \begin{tikzcd}[column sep=2cm]
\Cat^{\cB_0}_{\cocart/\cB}
\arrow[dashed, transform canvas={yshift=0.9ex}]{r}
\arrow[\surjmonoleft, transform canvas={yshift=-0.9ex}]{r}[yshift=-0.2ex]{\bot}
&
\coCart_\cB
\end{tikzcd} \]
to the surjective monomorphism, which takes an object $(\cE \ra \cB)$ to the horizontal composite in the diagram
\begin{equation}
\label{diagram.freely.add.cocart.for.second.factor}
\begin{tikzcd}
\cE
\underset{\cB}{\times}
\Ar^{\cB_1}(\cB)
\arrow{r}
\arrow{d}
&
\Ar^{\cB_1}(\cB)
\arrow{r}{\ev_t}
\arrow{d}{\ev_s}
&
\cB
\\
\cE
\arrow{r}
&
\cB
\end{tikzcd}~.
\end{equation}
\end{prop}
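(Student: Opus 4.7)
The plan is to directly verify that the construction $(\cE \longra \cB) \mapsto (\tilde\cE \longra \cB)$ given in diagram \Cref{diagram.freely.add.cocart.for.second.factor}, together with the unit $\eta: \cE \longra \tilde\cE$ sending $e \mapsto (e, \mathrm{id}_{p(e)})$ (where $p$ denotes the projection of the given fibration $\cE \longra \cB$), defines the desired left adjoint.

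The first step is to show that $\tilde\cE \longra \cB$ is a cocartesian fibration. The key input is that $\ev_t: \Ar^{\cB_1}(\cB) \longra \cB$ is itself cocartesian: given $(f: b_0 \longra b) \in \Ar^{\cB_1}(\cB)$ and $\alpha: b \longra b'$ in $\cB$, the $[\cB_0;\cB_1]$-factorization of $\alpha \circ f$ as $b_0 \xra{g_0} c \xra{g_1} b'$ furnishes a cocartesian lift whose universal property is precisely the unique diagonal filler property of the factorization system. Extending to $\tilde\cE$, given $(e, f) \in \tilde\cE$ lying over $b$, we pushforward $e$ along $g_0 \in \cB_0$ using the cocartesian structure of $\cE \longra \cB$ to obtain $e' := (g_0)_* e$; the resulting morphism $(e, f) \longra (e', g_1)$ in $\tilde\cE$ is cocartesian over $\alpha$, combining orthogonality in $\cB$ with cocartesianness in $\cE$. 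The unit $\eta$ is well-defined since identities lie in $\cB_1$, and it preserves cocartesian lifts over $\cB_0$: for $\alpha \in \cB_0$, the factorization of $\alpha \circ \mathrm{id}$ is $\alpha$ followed by $\mathrm{id}$, so $\eta$ sends the cocartesian lift of $\alpha$ in $\cE$ to that in $\tilde\cE$.

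For the universal property, I would proceed as follows. Fix $(\cF \longra \cB) \in \coCart_\cB$ and a morphism $F: \cE \longra \cF$ preserving $\cB_0$-cocartesian lifts. The cocartesian structure of $\cF$ yields a canonical ``tensor'' functor $\cF \times_\cB \Ar(\cB) \longra \cF$ sending $(e, f) \mapsto f_*(e)$. The desired extension is then the composite
\[ \tilde F: \tilde\cE = \cE \times_\cB \Ar^{\cB_1}(\cB) \xra{F \times \mathrm{id}} \cF \times_\cB \Ar^{\cB_1}(\cB) \longhookra \cF \times_\cB \Ar(\cB) \longra \cF. \]
That $\tilde F \circ \eta \simeq F$ follows since pushforward along an identity is an equivalence; that $\tilde F$ preserves all cocartesian lifts follows from the explicit description of the cocartesian monodromy in $\tilde\cE$ given above, together with composability of pushforwards in $\cF$ and the assumed $\cB_0$-preservation by $F$; and uniqueness of $\tilde F$ follows because every object of $\tilde\cE$ is a cocartesian pushforward along a $\cB_1$-morphism of an object in the image of $\eta$, and any cocartesian-preserving extension must commute with such pushforwards.

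The main obstacle I anticipate is making the cocartesianness of $\ev_t$ (and thence of $\tilde\cE \longra \cB$) rigorous at the $\infty$-categorical level; in a 1-categorical setting this is essentially immediate from the factorization-system axiom, but in the $\infty$-setting identifying cocartesian morphisms with diagonal-filler data requires some care, probably best handled via straightening. The existence and functoriality of the ``tensor'' functor for a cocartesian fibration, and the bookkeeping of the universal property of $\tilde F$, should be routine by comparison.
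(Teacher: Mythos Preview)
Your proposal is correct and follows essentially the same route as the paper: both arguments first establish that $\ev_t : \Ar^{\cB_1}(\cB) \to \cB$ is a cocartesian fibration with the explicit lifts you describe, deduce that $\tilde\cE \to \cB$ is cocartesian, take the identity section as unit, and then verify the adjunction. The paper packages the first step as a separate lemma and resolves your anticipated $\infty$-categorical obstacle cleanly: rather than analyzing lifting diagrams directly, it observes that $\ev_t : \Ar(\cB) \to \cB$ is standardly cocartesian, that the inclusion $\Ar^{\cB_1}(\cB) \hookrightarrow \Ar(\cB)$ has a left adjoint (take the $\cB_1$-component of the factorization) whose localizing morphisms are inverted by $\ev_t$, and then invokes \cite[Lemma 2.2.4.11 and Remark 2.2.4.12]{LurieHA}. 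The only other presentational difference is that the paper explicitly checks functoriality of the construction (that a morphism $\cE_1 \to \cE_2$ in $\Cat^{\cB_0}_{\cocart/\cB}$ induces a morphism in $\coCart_\cB$) before verifying the adjunction via unit and counit, whereas you fold this into the universal-property argument; both are fine.
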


The proof of \Cref{freely.add.cocart.for.second.factor} requires the following easy result.

\begin{lemma}\label{fact.syst.gives.restricted.cocart.fibn}
Let $\cB$ be an $\infty$-category equipped with a factorization system $[\cB_0;\cB_1]$.  Then the restricted evaluation functor
\begin{equation}
\label{restricted.ev.t.cocart.fibn}
\ev_t :
\Ar^{\cB_1}(\cB)
\longra
\cB
\end{equation}
is a cocartesian fibration.  Moreover, given an object $(\tilde{b} \ra b) \in \Ar^{\cB_1}(\cB)$ and a morphism $b \ra b'$ in $\cB$, a cocartesian lift is given by the commutative square
\begin{equation}
\label{cocart.lift.in.restricted.ev.t.cocart.fibn}
\begin{tikzcd}
\tilde{b}
\arrow[dashed]{r}{\cB_0}
\arrow{d}[swap]{\cB_1}
&
\tilde{b}'
\arrow[dashed]{d}{\cB_1}
\\
b
\arrow{r}
&
b'
\end{tikzcd}
\end{equation}
involving the unique indicated factorization of the composite $\tilde{b} \ra b \ra b'$ (thought of as a morphism in $\Ar^{\cB_1}(\cB)$ by reading horizontally).
\end{lemma}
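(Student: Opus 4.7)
The plan is to verify the cocartesian lifting property directly, exhibiting the square \Cref{cocart.lift.in.restricted.ev.t.cocart.fibn} as the relevant lift and deducing its universality from the orthogonality characterization of the factorization system $[\cB_0;\cB_1]$ on $\cB$.

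First I will construct the candidate lift. Given $(\tilde{b} \xra{p} b) \in \Ar^{\cB_1}(\cB)$ (so $p \in \cB_1$) and a morphism $f : b \to b'$ in $\cB$, factor the composite $f \circ p : \tilde{b} \to b'$ uniquely as $\tilde{b} \xra{g} \tilde{b}' \xra{p'} b'$ with $g \in \cB_0$ and $p' \in \cB_1$. The square
\[
\begin{tikzcd}
\tilde{b} \arrow{r}{g} \arrow{d}[swap]{p} & \tilde{b}' \arrow{d}{p'} \\
b \arrow{r}{f} & b'
\end{tikzcd}
\]
commutes by construction and its vertical legs lie in $\cB_1$, so it defines a morphism $\sigma$ in $\Ar^{\cB_1}(\cB)$ lying over $f$; this will be the proposed cocartesian lift.

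Second, I will verify that $\sigma$ is cocartesian. Fix another object $(\tilde{c} \xra{q} c) \in \Ar^{\cB_1}(\cB)$ and a morphism $\tau : (\tilde{b} \xra{p} b) \to (\tilde{c} \xra{q} c)$ in $\Ar^{\cB_1}(\cB)$ whose base factors in $\cB$ as $b \xra{f} b' \xra{h} c$; concretely, $\tau$ is a commuting square with top edge some $\alpha : \tilde{b} \to \tilde{c}$ and homotopy $q\alpha \simeq hfp$. The data of a factorization of $\tau$ through $\sigma$ is exactly that of a diagonal filler
\[
\begin{tikzcd}
\tilde{b} \arrow{r}{\alpha} \arrow{d}[swap]{g} & \tilde{c} \arrow{d}{q} \\
\tilde{b}' \arrow[dashed]{ru}{\beta} \arrow{r}{hp'} & c
\end{tikzcd}
\]
whose outer square commutes --- which it does, combining $p'g \simeq fp$ (the defining property of the factorization) with the commutativity $q\alpha \simeq hfp$ of $\tau$. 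Since $g \in \cB_0$ and $q \in \cB_1$, the orthogonality property of the factorization system forces the space of such diagonals $\beta$ to be contractible, which is precisely the cocartesian universal property of $\sigma$.

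The formula for the cocartesian lift is manifest from the construction, proving the ``moreover'' clause. I expect no serious obstacle; the only bookkeeping subtlety is the careful translation of the mapping-space universal property of a cocartesian edge (i.e.\! the pullback-of-mapping-spaces condition over $\Map_\cB$) into the orthogonality lifting problem displayed above, after which contractibility of the space of fillers delivers the conclusion at once.
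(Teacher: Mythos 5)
Your proof is correct, but it proceeds by a genuinely different route from the one in the paper. The paper's argument is a two-line appeal to general machinery: it starts from the fact that $\ev_t \colon \Ar(\cB) \to \cB$ is a cocartesian fibration whose cocartesian morphisms are those sent to equivalences by $\ev_s$, observes that the fully faithful inclusion $\Ar^{\cB_1}(\cB) \hookrightarrow \Ar(\cB)$ admits a left adjoint (reflecting a morphism onto the $\cB_1$-part of its canonical factorization) which is compatible with $\ev_t$ in the sense that morphisms inverted by the reflection are already inverted by $\ev_t$, and then invokes \cite[Lemma 2.2.4.11 and Remark 2.2.4.12]{LurieHA} to conclude that the reflective subcategory is again a cocartesian fibration with the stated cocartesian edges. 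You instead verify the defining mapping-space criterion for a cocartesian edge by hand: you exhibit the candidate lift via the unique $[\cB_0;\cB_1]$-factorization of $f \circ p$, and identify the fiber of the comparison map
\[
\hom_{\Ar^{\cB_1}(\cB)}\bigl((\tilde{b}' \to b'),(\tilde{c}\to c)\bigr)
\longra
\hom_{\Ar^{\cB_1}(\cB)}\bigl((\tilde{b} \to b),(\tilde{c}\to c)\bigr)
\underset{\hom_\cB(b,c)}{\times}
\hom_\cB(b',c)
\]
over a given $(\tau, h)$ with the space of diagonal fillers of a square with left leg $g \in \cB_0$ and right leg $q \in \cB_1$, whose contractibility is exactly the orthogonality $\cB_0 \perp \cB_1$ built into the factorization system. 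Both arguments are sound; the paper's buys brevity and situates the lemma within the standard theory of localizations of cocartesian fibrations, while yours is self-contained, avoids the HA localization machinery, and makes the explicit formula \Cref{cocart.lift.in.restricted.ev.t.cocart.fibn} for the cocartesian edges immediate rather than extracting it from the unit of the reflection. The one point to keep honest in a final write-up is the bookkeeping you flag yourself: checking contractibility of all fibers of the comparison map (over every point of the pullback, i.e.\! every $\tau$ together with every factorization datum $h$ of its base) does suffice to prove it is an equivalence, and your setup quantifies over exactly this data, so the gap is purely expository.
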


\begin{proof}
First of all, the evaluation functor
\[
\Ar(\cB)
\xra{\ev_t}
\cB
\]
is a cocartesian fibration, with cocartesian morphisms those which become equivalences under the functor $\ev_s$.  Then, observe that the fully faithful inclusion
\[
\Ar(\cB)
\longhookla
\Ar^{\cB_1}(\cB)
\]
admits a left adjoint (given by taking a morphism $\varphi$ with canonical factorization $[\varphi_1;\varphi_2]$ to the morphism $\varphi_2$), and moreover that any morphism which this left adjoint takes to an equivalence is also taken to an equivalence by $\Ar(\cB) \xra{\ev_t} \cB$.  Thus, the claim follows from \cite[Lemma 2.2.4.11 and Remark 2.2.4.12]{LurieHA}.
\end{proof}

\begin{proof}[Proof of \Cref{freely.add.cocart.for.second.factor}]
First of all, the horizontal composite in \Cref{diagram.freely.add.cocart.for.second.factor} is indeed a cocartesian fibration by \Cref{fact.syst.gives.restricted.cocart.fibn}, since cocartesian fibrations are stable under pullback and composition.  So this construction defines a functor
\[
\Cat^{\cB_0}_{\cocart/\cB}
\longra
\Cat_{\cocart/\cB}~.
\]

Next, to see that this functor actually factors through the subcategory
\[
\coCart_\cB
\subset
\Cat_{\cocart/\cB}~,
\]
we must show that given a morphism
\begin{equation}
\cE_1
\longra
\cE_2
\end{equation}
in $\Cat^{\cB_0}_{\cocart/\cB}$, the horizontal functor $F$ in the resulting commutative diagram
\begin{equation}
\label{upside.down.coat.hanger}
\begin{tikzcd}[column sep=0cm]
\cE_1
\underset{\cB}{\times}
\Ar^{\cB_1}(\cB)
\arrow{rr}{F}
\arrow{rd}
&
&
\cE_2
\underset{\cB}{\times}
\Ar^{\cB_1}(\cB)
\arrow{ld}
\\
&
\Ar^{\cB_1}(\cB)
\arrow{d}[swap]{\ev_t}
\\
&
\cB
\end{tikzcd}
\end{equation}
preserves cocartesian lifts of morphisms in $\cB$.  So, fix a morphism $b \xra{\varphi} b'$ in $\cB$.  Suppose we are given a lift of its source $b \in \cB$ in
\[
\cE_1
\underset{\cB}{\times}
\Ar^{\cB_1}(\cB)~:
\]
this consists of an object $(\tilde{b} \ra b) \in \Ar^{\cB_1}(\cB)$ as well as an object $e \in \cE_1$ lying over $\tilde{b} \in \cB$.  Then, by \Cref{fact.syst.gives.restricted.cocart.fibn} and parts (2) and (3) of \cite[Proposition 2.4.1.3]{LurieHTT}, we can obtain a cocartesian lift of the morphism $b \ra b'$ by first extracting the unique factorization
\[
\begin{tikzcd}
\tilde{b}
\arrow[dashed]{r}{\cB_0}
\arrow{d}[swap]{\cB_1}
&
\tilde{b}'
\arrow[dashed]{d}{\cB_1}
\\
b
\arrow{r}
&
b'
\end{tikzcd}
\]
and then extracting a cocartesian lift of the morphism $\tilde{b} \ra \tilde{b}'$ in $\cB$ at $e \in \cE_1$.  So clearly the functor $F$ preserves cocartesian lifts over morphisms in the subcategory $\cB_0 \subset \cB$.  On the other hand, if the morphism $\varphi$ instead lies in the subcategory $\cB_1 \subset \cB$, then the map $\tilde{b} \ra \tilde{b}'$ will be an equivalence, and hence the cocartesian lift in $\cE_1$ will be an equivalence as well.  So the functor $F$ preserves cocartesian lifts over morphisms in the subcategory $\cB_1 \subset \cB$ as well.  Since cocartesian lifts in a cocartesian fibration compose, the existence of the factorization system $[\cB_0;\cB_1]$ on $\cB$ implies that the functor $F$ preserves all cocartesian lifts.  So our construction does indeed define a functor
\[
\Cat^{\cB_0}_{\cocart/\cB}
\longra
\coCart_\cB~.
\]

Thus, it remains to show that this functor is indeed a left adjoint.  For this, we first observe that the identity section of the functor $\ev_s$ in the diagram \Cref{diagram.freely.add.cocart.for.second.factor} induces a section
\[
\begin{tikzcd}
\cE
\underset{\cB}{\times}
\Ar^{\cB_1}(\cB)
\arrow{d}
\\
\cE
\arrow[dashed, bend left]{u}
\end{tikzcd}~,
\]
which lies in $\Cat^{\cB_0}_{\cocart/\cB}$ by our previous considerations.  We claim that if we consider $(\cE \ra \cB) \in \Cat^{\cB_0}_{\cocart/\cB}$ then this section will be the unit map, while if we consider $(\cE \ra \cB) \in \coCart_\cB$ then the downwards functor will be the counit map.  Indeed, if we are given any $(\cE \ra \cB) \in \Cat^{\cB_0}_{\cocart/\cB}$ and any $(\cD \ra \cB) \in \coCart_\cB$, it is straightforward to verify that these maps induce inverse equivalences
\[
\hom_{\Cat^{\cB_0}_{\cocart/\cB}}(\cE,\cD)
\simeq
\hom_{\coCart_\cB}
\left(
\cE
\underset{\cB}{\times}
\Ar^{\cB_1}(\cB)
,
\cD
\right)
\]
of spaces.
\end{proof}

\bibliographystyle{amsalpha}
\bibliography{Enriched}{}

\providecommand{\bysame}{\leavevmode\hbox to3em{\hrulefill}\thinspace}
\providecommand{\MR}{\relax\ifhmode\unskip\space\fi MR }
\providecommand{\MRhref}[2]{%
  \href{http://www.ams.org/mathscinet-getitem?mr=#1}{#2}
}
\providecommand{\href}[2]{#2}
\begin{thebibliography}{AGHL14}

\bibitem[AF15]{AF-oldfact}
David Ayala and John Francis, \emph{Factorization homology of topological
  manifolds}, J. Topol. \textbf{8} (2015), no.~4, 1045--1084.

\bibitem[AF18]{AF-flagged}
\bysame, \emph{Flagged higher categories}, Topology and quantum theory in
  interaction, Contemp. Math., vol. 718, Amer. Math. Soc., 2018, pp.~137--173.

\bibitem[AF20]{AF-fibns}
\bysame, \emph{Fibrations of {$\infty$}-categories}, High. Struct. \textbf{4}
  (2020), no.~1, 168--265.

\bibitem[AFR]{AFR-fact}
David Ayala, John Francis, and Nick Rozenblyum, \emph{Factorization homology
  {I}: higher categories}, available at \texttt{arXiv:1504.04007}, v6.

\bibitem[AFR19]{AFR-strat}
\bysame, \emph{A stratified homotopy hypothesis}, J. Eur. Math. Soc. (JEMS)
  \textbf{21} (2019), no.~4, 1071--1178.

\bibitem[AGH09]{AngGerHess-trunc}
Vigleik Angeltveit, Teena Gerhardt, and Lars Hesselholt, \emph{On the
  {$K$}-theory of truncated polynomial algebras over the integers}, J. Topol.
  \textbf{2} (2009), no.~2, 277--294.

\bibitem[AGHL14]{AngGerHillLind-trunc}
Vigleik Angeltveit, Teena Gerhardt, Michael~A. Hill, and Ayelet Lindenstrauss,
  \emph{On the algebraic {$K$}-theory of truncated polynomial algebras in
  several variables}, J. K-Theory \textbf{13} (2014), no.~1, 57--81.

\bibitem[AMGRa]{AMR-trace}
David Ayala, Aaron Mazel-Gee, and Nick Rozenblyum, \emph{The geometry of the
  cyclotomic trace}, available at \texttt{arXiv:1710.06409}.

\bibitem[AMGRb]{AMR-cyclo}
\bysame, \emph{A naive approach to genuine {$G$}-spectra and cyclotomic
  spectra}, available at \texttt{arXiv:1710.06416}.

\bibitem[AMGRc]{AMR-strat}
\bysame, \emph{Stratified noncommutative geometry}, available at
  \texttt{arXiv:1910.14602}.

\bibitem[AMGRd]{circle}
\bysame, \emph{Symmetries of the cyclic nerve}, available at
  \texttt{arXiv:2405.03897}.

\bibitem[BD04]{BD-chiral}
Alexander Beilinson and Vladimir Drinfeld, \emph{Chiral algebras}, American
  Mathematical Society Colloquium Publications, vol.~51, American Mathematical
  Society, Providence, RI, 2004.

\bibitem[BGN]{BGN-dual}
Clark Barwick, Saul Glasman, and Denis Nardin, \emph{Dualizing cartesian and
  cocartesian fibrations}, available at \texttt{arXiv:1409.2165}, v1.

\bibitem[BHM93]{BHM}
M.~B{\"o}kstedt, W.~C. Hsiang, and I.~Madsen, \emph{The cyclotomic trace and
  algebraic {$K$}-theory of spaces}, Invent. Math. \textbf{111} (1993), no.~3,
  465--539.

\bibitem[BM]{BluMan-AKTS}
Andre Blumberg and Michael Mandell, \emph{The homotopy groups of the algebraic
  {K}-theory of the sphere spectru,}, available at \texttt{arXiv:1408.0133},
  v2.

\bibitem[B{\"o}d87]{Bodig-split}
C.-F. B{\"o}digheimer, \emph{Stable splittings of mapping spaces}, Algebraic
  topology ({S}eattle, {W}ash., 1985), Lecture Notes in Math., vol. 1286,
  Springer, Berlin, 1987, pp.~174--187.

\bibitem[B{\"o}k]{Bok-THH}
M.~B{\"o}kstedt, \emph{Topological {H}ochschild homology}, preprint, Bielefeld,
  1985.

\bibitem[CG]{CostGwill-fact2}
Kevin Costello and Owen Gwilliam, \emph{Factorization algebras in quantum field
  theory. {V}ol. 1}, available from the author's website (version dated April
  28, 2016).

\bibitem[CG17]{CostGwill-fact1}
\bysame, \emph{Factorization algebras in quantum field theory. {V}ol. 1}, New
  Mathematical Monographs, vol.~31, Cambridge University Press, Cambridge,
  2017.

\bibitem[Con72]{Cond-fibns}
Fran\c{c}ois Conduch\'e, \emph{Au sujet de l'existence d'adjoints \`a droite
  aux foncteurs ``image r\'eciproque'' dans la cat\'egorie des cat\'egories},
  C. R. Acad. Sci. Paris S\'er. A-B \textbf{275} (1972), A891--A894.

\bibitem[Con83]{Connes-coh}
Alain Connes, \emph{Cohomologie cyclique et foncteurs {${\rm Ext}^n$}}, C. R.
  Acad. Sci. Paris S\'er. I Math. \textbf{296} (1983), no.~23, 953--958.

\bibitem[DGM13]{DGM-book}
Bj\o rn~Ian Dundas, Thomas~G. Goodwillie, and Randy McCarthy, \emph{The local
  structure of algebraic {K}-theory}, Algebra and Applications, vol.~18,
  Springer-Verlag London, Ltd., London, 2013.

\bibitem[DM94]{DM-KTHH}
Bj\o rn~Ian Dundas and Randy McCarthy, \emph{Stable {$K$}-theory and
  topological {H}ochschild homology}, Ann. of Math. (2) \textbf{140} (1994),
  no.~3, 685--701.

\bibitem[Dun97]{DundasRelAKT}
Bj{\o}rn~Ian Dundas, \emph{Relative {$K$}-theory and topological cyclic
  homology}, Acta Math. \textbf{179} (1997), no.~2, 223--242.

\bibitem[GH15]{GH-enr}
David Gepner and Rune Haugseng, \emph{Enriched {$\infty$}-categories via
  non-symmetric {$\infty$}-operads}, Adv. Math. \textbf{279} (2015), 575--716.

\bibitem[GHN]{GHN}
David Gepner, Rune Haugseng, and Thomas Nikolaus, \emph{Lax colimits and free
  fibrations in $\infty$-categories}, available at \texttt{arXiv:1501.02161},
  v2.

\bibitem[Gir64]{Gir-desc}
Jean Giraud, \emph{M\'ethode de la descente}, Bull. Soc. Math. France M\'em.
  \textbf{2} (1964), viii+150.

\bibitem[GJ93]{GetzlerJones-paracyclic}
Ezra Getzler and John D.~S. Jones, \emph{The cyclic homology of crossed product
  algebras}, J. Reine Angew. Math. \textbf{445} (1993), 161--174.

\bibitem[Goo86]{GooRelAKT}
Thomas~G. Goodwillie, \emph{Relative algebraic {$K$}-theory and cyclic
  homology}, Ann. of Math. (2) \textbf{124} (1986), no.~2, 347--402.

\bibitem[GR17]{GR}
Dennis Gaitsgory and Nick Rozenblyum, \emph{A study in derived algebraic
  geometry. {V}ol. {I}. {C}orrespondences and duality}, Mathematical Surveys
  and Monographs, vol. 221, American Mathematical Society, Providence, RI,
  2017.

\bibitem[Hau15]{Haug-rect}
Rune Haugseng, \emph{Rectification of enriched {$\infty$}-categories}, Algebr.
  Geom. Topol. \textbf{15} (2015), no.~4, 1931--1982.

\bibitem[HM97a]{HessMad-trunc}
Lars Hesselholt and Ib~Madsen, \emph{Cyclic polytopes and the {$K$}-theory of
  truncated polynomial algebras}, Invent. Math. \textbf{130} (1997), no.~1,
  73--97.

\bibitem[HM97b]{HessMad-Witt}
\bysame, \emph{On the {$K$}-theory of finite algebras over {W}itt vectors of
  perfect fields}, Topology \textbf{36} (1997), no.~1, 29--101.

\bibitem[HM03]{HessMad-local}
\bysame, \emph{On the {$K$}-theory of local fields}, Ann. of Math. (2)
  \textbf{158} (2003), no.~1, 1--113.

\bibitem[HM04]{HessMad-dRW}
\bysame, \emph{On the {D}e {R}ham-{W}itt complex in mixed characteristic}, Ann.
  Sci. \'Ecole Norm. Sup. (4) \textbf{37} (2004), no.~1, 1--43.

\bibitem[Kal01]{Kallel-part}
Sadok Kallel, \emph{Spaces of particles on manifolds and generalized
  {P}oincar\'e dualities}, Q. J. Math. \textbf{52} (2001), no.~1, 45--70.

\bibitem[KR97]{KleinRog-fib}
John~R. Klein and John Rognes, \emph{The fiber of the linearization map
  {$A(*)\to K({\bf Z})$}}, Topology \textbf{36} (1997), no.~4, 829--848.

\bibitem[Lod98]{Loday-cyclic}
Jean-Louis Loday, \emph{Cyclic homology}, second ed., Grundlehren der
  Mathematischen Wissenschaften [Fundamental Principles of Mathematical
  Sciences], vol. 301, Springer-Verlag, Berlin, 1998, Appendix E by Mar\'\i a
  O. Ronco, Chapter 13 by the author in collaboration with Teimuraz Pirashvili.

\bibitem[Lur]{LurieHA}
Jacob Lurie, \emph{Higher algebra}, available from the author's website
  (version dated September 18, 2017).

\bibitem[Lur09]{LurieHTT}
\bysame, \emph{Higher topos theory}, Annals of Mathematics Studies, vol. 170,
  Princeton University Press, Princeton, NJ, 2009.

\bibitem[May72]{May-GILS}
J.~P. May, \emph{The geometry of iterated loop spaces}, Springer-Verlag,
  Berlin-New York, 1972, Lectures Notes in Mathematics, Vol. 271.

\bibitem[McC97]{McCRelAKT}
Randy McCarthy, \emph{Relative algebraic {$K$}-theory and topological cyclic
  homology}, Acta Math. \textbf{179} (1997), no.~2, 197--222.

\bibitem[McD75]{McDuff-conf}
Dusa McDuff, \emph{Configuration spaces of positive and negative particles},
  Topology \textbf{14} (1975), 91--107.

\bibitem[Rog03]{Rog-White}
John Rognes, \emph{The smooth {W}hitehead spectrum of a point at odd regular
  primes}, Geom. Topol. \textbf{7} (2003), 155--184.

\bibitem[Sal01]{Salv-conf}
Paolo Salvatore, \emph{Configuration spaces with summable labels},
  Cohomological methods in homotopy theory ({B}ellaterra, 1998), Progr. Math.,
  vol. 196, Birkh{\"a}user, Basel, 2001, pp.~375--395.

\bibitem[Seg73]{Segal-conf}
Graeme Segal, \emph{Configuration-spaces and iterated loop-spaces}, Invent.
  Math. \textbf{21} (1973), 213--221.

\end{thebibliography}

\end{document}